\newtheorem{theorem}{Theorem}[section]
\newtheorem{lemma}[theorem]{Lemma}
\newtheorem{corollary}[theorem]{Corollary}
\newtheorem{proposition}[theorem]{Proposition}
\theoremstyle{definition}
\newtheorem{definition}[theorem]{Definition}
\newtheorem{example}[theorem]{Example}
\theoremstyle{remark}
\newtheorem{remark}[theorem]{Remark}
\numberwithin{equation}{section}
\newcommand{\be}{\begin{equation}}
\newcommand{\ee}{\end{equation}}
\newcommand{\C}{{\mathbb C}}
\newcommand{\Z}{{\mathbb Z}}
\newcommand{\id}{{\rm{id}}}
\newcommand{\mf}{\mathfrak}
\newcommand{\fg}{{\mf g}}
\newcommand{\fb}{{\mf b}}
\newcommand{\fp}{{\mathfrak p}}
\newcommand{\fu}{{\mathfrak u}}
\newcommand{\U}{{\rm{U}}}
\newcommand{\End}{{\rm{End}}}
\newcommand{\fsl}{{\mathfrak {sl}}}
\newcommand{\fgl}{{\mathfrak {gl}}}
\newcommand{\gl}{{\mathfrak {gl}}}
\newcommand{\TU}{\widetilde{\U}}
\begin{document}

\normalfont

\title[Degenerate quantum groups]{Degenerate quantum general linear groups}

\author{Jin Cheng, Yan Wang}
\address[Cheng, Wang]{School of Mathematics and Statistics,
Shandong Normal University, Jinan, China}
\email{cheng934@mail.ustc.edu.cn}
\author{R. B. Zhang}
\address[Zhang]{School of Mathematics and Statistics,
University of Sydney, Sydney, Australia}
\email{ruibin.zhang@sydney.edu.au}
\date{\today}
\begin{abstract}
Given any pair of positive integers $m$ and $n$,   we construct a new Hopf algebra, which may be regarded as a  degenerate version of the quantum group of $\gl_{m+n}$.  We study its structure and develop a highest weight representation theory. The finite dimensional simple modules are classified in terms of highest weights, which are essentially characterised by $m+n-2$ nonnegative integers and two arbitrary nonzero scalars. In the special case with $m=2$ and $n=1$, an explicit basis  is constructed for each finite dimensional simple module.
For all $m$ and $n$, the degenerate quantum group has a natural irreducible representation acting on $\C(q)^{m+n}$. It admits an $R$-matrix that satisfies the Yang-Baxter equation and intertwines the co-multiplication and its opposite. This in particular gives rise to isomorphisms between the two module structures of any tensor power of $\C(q)^{m+n}$ defined relative to the co-multiplication and its opposite respectively.  A topological invariant of knots is constructed from this $R$-matrix, which reproduces the celebrated HOMFLY polynomial. Degenerate quantum groups of other classical types are briefly discussed.
\end{abstract}
\maketitle


\section{Introduction}\label{sect:intro}

Quantum groups \cite{D85, D86, J1, J2} emerged from the theory of Yang-Baxter type integrable models in statistical mechanics \cite{B} some 30 years ago.  Since then the study of these remarkable algebraic structures has grown into a vast research area \cite{CP, L}, with important applications in a number of areas in mathematical physics and pure mathematics such as integrable models \cite{B}, conformal field theory, low dimensional topology \cite{Jv, RT, ZGB, T} and representation theory \cite{L}.

The term quantum groups refers to a class of Hopf algebras and Hopf superalgebras, which include Drinfeld-Jimbo quantum groups \cite{D85, D86, J1, J2}, quantum supergroups \cite{BGZ, CK, Geer, Y, ZGB}, quantum affine Kac-Moody Lie algebras and superalgebras, and their dual Hopf (super)algebras \cite{FRT, Z98}, which are quantum analogues of algebraic (super) groups.
Research in the area largely focused on the structure and representation theory of these objects.
There were attempts to explore other quantum deformed algebras, such as multi-parameter quantum algebras, but with very limited success so far.  Presumably the reason is that  quantum groups are relatively rigid objects, thus  nontrivial tinkering of the definition would drastically change their structures.

In this paper, we investigate a class of new Hopf algebras which may be considered as degenerate versions of Drinfeld-Jimbo quantum groups of type $A$.  These Hopf algebras have interesting structures and a rich representation theory with useful applications in solving the Yang-Baxter equation and in constructing topological invariants of knots.

The idea goes back to an old paper of Cosmas K. Zachos appeared in the physics literature  \cite{Za},  where he studied symmetry properties of wave functions of quantum mechanical systems under the action of the Hopf algebra defined by \eqref{eq:Za}, which may be regarded as quantum $\fsl_2$ at $\sqrt{-1}$. 
The degenerate quantum groups in the present paper are obtained by letting one of the quantum $\fsl_2$ subalgebras of a quantum group degenerate to the algebra of Zachos' and appropriately modifying the Serre relations involving it, while keeping the other quantum $\fsl_2$ subalgebras essentially intact.  
Our investigation here is also partially motivated by a desire to understand quantum group analogues of Inon\"u-Wigner contractions. 

The immediate question is whether this leads to sensible Hopf algebras. By being sensible we mean that they should have structural properties which allow for a representation theory similar to that of universal enveloping algebras of semi-simple or affine Kac-Moody Lie (super)algebras.  In particular,  the degenerate quantum groups associated with finite root data should have enough finite dimensional irreducible representations at generic $q$ to make the representation theory interesting.
As we will see, at least in the type $A$ case, this indeed leads to interesting Hopf algebras.
Furthermore, Remark \ref{rem:no-deform} indicates that degenerate quantum groups obtained this way are not ``deformation quantisations" of universal enveloping algebras of  Lie algebras or Lie superalgebras.

We now briefly describe the content of this paper.

We construct a degenerate quantum general linear group $\U_q(\gl_{m, n})$ of the general linear Lie algebra $\gl_{m+n}$ for each pair of positive integers $m, n$. It is a Hopf algebra containing  a Hopf subalgebra $\U_q(\fsl_{m, n})$, which we call the degenerate quantum special linear group.

We develop the structure of $\U_q(\gl_{m, n})$ and apply results to study a highest weight representation theory.  A classification of the finite dimensional simple modules is obtained in terms of highest weights in Section \ref{sect:rep-theory}. Such modules are essentially characterised by elements of $\Z_+^{\times(m+n-2)}\times\C(q)^*\times\C(q)^*$ with $\C(q)^*=\C(q)\backslash\{0\}$,
see Theorem \ref{thm:fd-irreps}.
In the special case $m=2$ and $n=1$, an explicit basis  is constructed for each finite dimensional simple module in Lemma \ref{lem:typical} and Lemma \ref{lem:atypical}.

We endow $V=\C(q)^{m+n}$ with a module structure of
the degenerate quantum general linear group $\U_q(\gl_{m, n})$ for any $m, n$ (see Section \ref{sect:tensors}).  Tensor powers of $V$ lead to an infinite family of finite dimensional representations.
An immediate question is whether the $\U_q(\gl_{m, n})$-modules
with the same underlying vector space $V^{\otimes r}$ but defined relative to the co-multiplication and its opposite are isomorphic. We answer this question in the affirmative in Section \ref{sect:R-matrix}. This is done by constructing an $R$-matrix, that is, a solution of the Yang-Baxter equation, which intertwines the two co-multiplications.

We develop aspects of the invariant theory of $\U_q(\gl_{m, n})$ in Section \ref{sect:inv}, and apply them to the $R$-matrix mentioned above to construct a topological invariant of knots in Theorem \ref{thm:knot-inv}. The knot invariant obtained coincides with the celebrated HOMFLY polynomial \cite{HOMFLY}. This is an interesting and important application of $\U_q(\gl_{m, n})$.

It should be pointed out that the emergence of the solution of the Yang-Baxter equation (see Section \ref{sect:R-matrix}) and construction of the HOMFLY polynomial (see Section \ref{sect:app}) in the context of the representation theory of $\U_q(\gl_{m, n})$ clearly demonstrate that the degenerate quantum general group will have an important role  to play in soluble models and low dimensional topology. This fact alone justifies a thorough investigation of the structure and representations of $\U_q(\gl_{m, n})$.

We also briefly discuss how to generalise the definition of degenerate quantum general linear group to degenerate quantum groups of $B$, $C$ and $D$ types. This is done by introducing generalised Dynkin diagrams, and then defining a degenerate quantum group for each diagram, see Section \ref{sect:general}.
In a future work, we hope to develop a general framework for the degeneration process, which will enable us to systematically study degenerate quantum groups associated with finite dimensional simple Lie algebras and affine Kac-Moody Lie algebras.

Despite the fact that the degenerate quantum general linear group $\U_q(\gl_{m, n})$ is only an ordinary Hopf algebra (i.e., does not have an odd subspace), while the quantum general linear supergroup $\U_q(\gl_{m|n})$  \cite{Z93, Z98} is a  Hopf superalgebra, we observe a number of similarities between them, see Section \ref{sect:duality} for further discussions.  It will be very interesting to find a precise connection between the two, e.g., analogous to the quantum correspondences between quantum affine superalgebras studied in \cite{XZ, Z92, Z97}. 
If such a correspondence exists, it will enable us to study quantum supergroups from the new perspective of degenerate quantum groups.  This will be particularly welcome, as the theory of quantum supergroups is not very well understood. 

Throughout the paper,  we work over the field $\C(q)$ of rational functions in the indeterminate  $q$.

\section{A degeneration of the quantum group of $\fsl_3$}\label{sect:sl3}
We start with the example of a degenerate version of the quantum group of $\fsl_3$.
This example has  most of the new properties not shared by  ordinary quantum groups, and its analysis presented here applies to arbitrary degenerate quantum groups. Results obtained here will be used in an essential way in later sections.
The advantage of treating this example first is that one can easily isolate the new properties to gain a clear understanding of them.  

This degenerate quantum group of $\fsl_3$ is also the first nontrivial example of which
the structure and representation theory can be thoroughly understood.
It provides an ideal test ground for assessing whether the idea of degenerating quantum groups is likely to be fruitful.

\subsection{The degenerate quantum group $\U_q(\fsl_{2, 1})$}
Recall that the Drinfeld-Jimbo quantum group $\U_q(\fsl_3)$ is generated by two $\U_q(\fsl_2)$ subalgebras with the same $q$, which are linked together by certain relations, in particular, Serre relations.  The corresponding degenerate quantum group is obtained by keeping one quantum $\fsl_2$ subalgebra as it is at generic $q$, while letting the other degenerate to the Hopf algebra used by Zachos,
which may be thought \cite{Z92} as generated by
$k, k^{-1}, X^+, X^-$ with relations
\begin{eqnarray}\label{eq:Za}
\begin{aligned}
&k k^{-1}=1, \quad k X^\pm k^{-1} =  -X^\pm, \\
 &X^+X^- -  X^-X^+= \frac{k - k^{-1}}{q-q^{-1}}, \quad (X^\pm)^2=0.
\end{aligned}
\end{eqnarray}
[The Hopf algebraic structure of it will become clear.]
In more precise term, we have the following definition.
\begin{definition} \label{def:case21}
Let $\U_q(\fsl_{2, 1})$ be the unital associative algebra over $\C(q)$ defined by the following presentation. The generators are
$e_i, \ f_i, \ k_i, \ k_i^{-1}$ ($i=1, 2$)
and the relations are given by
\begin{eqnarray}
&k_i k_i^{-1} =1,  \quad   k_i^{\pm 1} k_j^{\pm 1} = k_j^{\pm 1} k_i^{\pm 1},  \quad \forall i, j,
\label{eq:relat-1}\\
&k_i e_j k_i^{-1} =  q^{-1} e_j,  \quad k_i f_j k_i^{-1} =  q f_j, \quad i\ne j,  \label{eq:relat-2}\\
&k_1 e_1 k_1^{-1} = q^2 e_1, \quad  k_1 f_1 k_1^{-1} = q^{-2} f_1, \label{eq:relat-3}\\
&k_2 e_2 k_2^{-1} = - e_2, \quad k_2 f_2 k_2^{-1} = - f_2, \label{eq:relat-4}\\
&e_i f_j - f_j e_i = \delta_{i j} \frac{k_i - k_i^{-1}}{q-q^{-1}}, \label{eq:relat-5}\\
&e_1^2 e_2 - (q+q^{-1}) e_1 e_2 e_1 + e_2 e_1^2 =0, \label{eq:relat-6}\\
&f_1^2 f_2 - (q+q^{-1}) f_1 f_2 f_1 + f_2 f_1^2 =0, \label{eq:relat-7}\\
&e_2^2=0, \quad f_2^2=0. \label{eq:relat-8}
\end{eqnarray}
Call $\U_q(\fsl_{2, 1})$ a degenerate quantum group of $\fsl_3$.
\end{definition}

\begin{remark}
The elements $k_2^{\pm 1}, e_2, f_2$ generate a subalgebra isomorphic to that defined by \eqref{eq:Za}. It may be regarded as a degenerate quantum $\fsl_2$ and we denote it by $\U_q(\fsl_{1,1})$.
\end{remark}

\begin{remark} \label{rem:no-deform}
Note that $\U_q(\fsl_{1,1})$ (defined over $\C[q, q^{-1}]$) does not specialise at $q=1$ to the universal enveloping algebra of any Lie algebra or Lie superalgebra. Hence it is not a
 ``deformation quantisation" of any universal enveloping algebra.
\end{remark}

\begin{remark}\label{rem:super}
It is interesting to compare the definition of $\U_q(\fsl_{2,1})$ with that of the quantum special linear supergroup $\U_q(\fsl_{2|1})$ \cite{BGZ, Geer, Y, ZGB}.  Their differences lie in \eqref{eq:relat-4} and \eqref{eq:relat-5}.
If one replaces \eqref{eq:relat-4} and \eqref{eq:relat-5} in Definition \ref{def:case21} by the following relations respectively,
\[
\begin{aligned}
&k_2 e_2 k_2^{-1} = e_2, \quad k_2 f_2 k_2^{-1} = f_2, \quad
&e_i f_j - (-1)^{[e_i][f_j]} f_j e_i = \delta_{i j} \frac{k_i - k_i^{-1}}{q-q^{-1}},
\end{aligned}
\]
where $[e_1]=[f_1]=0$ and $[e_2]=[f_2]=1$ are the parity of these elements, one obtains the quantum supergroup $\U_q(\fsl_{2|1})$ as an associative algebra.
Now $\U_q(\fsl_{2|1})$ is a  Hopf superalgebra; its $\Z_2$-grading enters the definition of the co-multiplication in a nontrivial way.  However, $\U_q(\fsl_{2,1})$ is only an ordinary (i.e., not super) Hopf algebra as we will see.
\end{remark}

\subsection{Hopf algebraic structure of $\U_q(\fsl_{2, 1})$}

We now consider the structure of the degenerate quantum group $\U_q(\fsl_{2, 1})$.

\begin{lemma} \label{lem:hopf-21} Let $\widetilde{\U}_q(\fsl_{2, 1})$ be the unital algebra generated by $e_i, \ f_i, \ k_i^{\pm 1}$ ($i=1, 2$) subject to the relations \eqref{eq:relat-1} to \eqref{eq:relat-5} only.
Then $\TU_q(\fsl_{2, 1})$  has the structure of a Hopf algebra with co-multiplication
$\Delta: \TU_q(\fsl_{2, 1}) \longrightarrow \TU_q(\fsl_{2, 1})\otimes \TU_q(\fsl_{2, 1})$
\[
\begin{aligned}
\Delta(e_i)=e_i\otimes k_i + 1\otimes e_i, \quad
\Delta(f_i)=f_i\otimes 1 + k_i^{-1}\otimes f_i, \quad
\Delta(k_i)=k_i\otimes k_i;
\end{aligned}
\]
co-unit $\epsilon:  \TU_q(\fsl_{2, 1}) \longrightarrow \C(q)$
\[
\epsilon(e_i)=\epsilon(f_i)=0, \quad \epsilon(k_i)=1;
\]
and antipode $S: \TU_q(\fsl_{2, 1}) \longrightarrow \TU_q(\fsl_{2, 1})$
\[
S(e_i)=- e_i k_i^{-1}, \quad S(f_i)= - k_i f_i, \quad S(k_i)=k_i^{-1}.
\]
\end{lemma}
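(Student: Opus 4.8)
The plan is to take $\Delta$, $\epsilon$ and $S$ as prescribed on the generators $e_i,f_i,k_i^{\pm1}$ and to extend them by the universal property of the presentation of $\TU_q(\fsl_{2,1})$ by relations \eqref{eq:relat-1}--\eqref{eq:relat-5}. Note that the Serre and nilpotency relations \eqref{eq:relat-6}--\eqref{eq:relat-8} are absent from $\TU_q(\fsl_{2,1})$ by definition, so they play no role in this lemma. First I would show that $\Delta$ extends to a unital algebra homomorphism $\TU_q(\fsl_{2,1})\to\TU_q(\fsl_{2,1})\ot\TU_q(\fsl_{2,1})$ and $\epsilon$ to a unital algebra homomorphism $\TU_q(\fsl_{2,1})\to\C(q)$; for this it suffices to verify that the proposed images of the generators satisfy each of \eqref{eq:relat-1}--\eqref{eq:relat-5} in the target. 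Since $\epsilon$ annihilates $e_i,f_i$ and sends $k_i\mapsto1$, every relation collapses to a trivial identity, so $\epsilon$ is immediate. For $\Delta$, the checks of \eqref{eq:relat-1}, \eqref{eq:relat-2}, \eqref{eq:relat-3} and of the off-diagonal ($i\ne j$) case of \eqref{eq:relat-5} are identical to the corresponding computations for the undeformed $\U_q(\fsl_3)$ and present no difficulty.

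The two verifications that genuinely involve the degeneration are \eqref{eq:relat-4} and the diagonal ($i=j=2$) case of \eqref{eq:relat-5}. For \eqref{eq:relat-4}, using $\Delta(k_2)=k_2\ot k_2$ and $k_2 e_2 k_2^{-1}=-e_2$, I would compute
\[
\Delta(k_2)\Delta(e_2)\Delta(k_2)^{-1}=(k_2 e_2 k_2^{-1})\ot k_2 \;+\; (k_2 k_2^{-1})\ot(k_2 e_2 k_2^{-1})=-\Delta(e_2),
\]
and likewise for $f_2$, so the sign simply propagates through both tensor legs. For the diagonal relation I would expand $\Delta(e_i)\Delta(f_i)-\Delta(f_i)\Delta(e_i)$ and observe that the cross term $e_i k_i^{-1}\ot k_i f_i$ from the first product cancels the cross term $k_i^{-1}e_i\ot f_i k_i$ from the second: writing $k_i e_i k_i^{-1}=a_i e_i$ and $k_i f_i k_i^{-1}=a_i^{-1}f_i$ (with $a_1=q^2$, $a_2=-1$), one has $e_i k_i^{-1}\ot k_i f_i=a_i k_i^{-1}e_i\ot a_i^{-1}f_i k_i=k_i^{-1}e_i\ot f_i k_i$, so the cancellation is insensitive to the value of $a_i$, while the surviving diagonal terms reassemble into $\frac{\Delta(k_i)-\Delta(k_i)^{-1}}{q-q^{-1}}$. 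This is the conceptual crux: compatibility of $\Delta$ with \eqref{eq:relat-5} depends only on the pairing of the Cartan scalars, not on whether they equal $q^{\pm2}$ or $-1$, which is precisely why the degenerate relation requires no Koszul-type sign and $\TU_q(\fsl_{2,1})$ is an ordinary rather than a super Hopf algebra.

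Next I would treat the antipode. Since $S$ must be an algebra anti-homomorphism, I would check that the images $S(e_i)=-e_ik_i^{-1}$, $S(f_i)=-k_if_i$, $S(k_i)=k_i^{-1}$ satisfy \eqref{eq:relat-1}--\eqref{eq:relat-5} with all products reversed, equivalently that they define a homomorphism into the opposite algebra; again the only relations requiring sign bookkeeping are \eqref{eq:relat-4} and \eqref{eq:relat-5}, and the computation mirrors that for $\Delta$. The bialgebra and Hopf axioms then follow by checking them on generators and invoking multiplicativity, since $\Delta,\epsilon$ are homomorphisms and $S$ an anti-homomorphism. Co-associativity $(\Delta\ot\id)\Delta=(\id\ot\Delta)\Delta$ and the co-unit identities $(\epsilon\ot\id)\Delta=\id=(\id\ot\epsilon)\Delta$ are short term-by-term computations on $e_i,f_i,k_i$; for the antipode axiom I would verify $m(S\ot\id)\Delta=\eta\epsilon=m(\id\ot S)\Delta$ on generators, e.g. $m(S\ot\id)\Delta(e_i)=-e_ik_i^{-1}k_i+e_i=0=\eta\epsilon(e_i)$, and analogously for $f_i$ and $k_i$, the agreement on generators propagating to all of $\TU_q(\fsl_{2,1})$ because the locus where the antipode identity holds is a subalgebra.

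The main difficulty here is conceptual care rather than computational depth: one must confirm that the $-1$ in the degenerate Cartan action \eqref{eq:relat-4} is consistent with the \emph{ungraded} co-product, i.e. that no super-signs are forced anywhere. The displayed computations settle this, and the structural reason is the scalar-independence of the cross-term cancellation in the diagonal case of \eqref{eq:relat-5}, which makes the Hopf structure on $\TU_q(\fsl_{2,1})$ robust under replacing the standard scalar $q^{2}$ by the degenerate scalar $-1$.
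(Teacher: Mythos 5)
Your proposal is correct and follows essentially the same route as the paper's proof: verify that the prescribed images of the generators satisfy relations \eqref{eq:relat-1}--\eqref{eq:relat-5} (with products reversed for $S$), so that $\Delta$, $\epsilon$ extend to homomorphisms and $S$ to an anti-homomorphism, then check co-associativity, the co-unit identities and the antipode identity on generators and propagate by (anti)multiplicativity. Your explicit observation that the cross-term cancellation in the diagonal case of \eqref{eq:relat-5} is insensitive to the Cartan scalar $a_i$ (so $-1$ works as well as $q^{\pm 2}$) is a nice conceptual refinement of the computation the paper carries out directly, but it is not a different method.
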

\begin{proof}
(1).
The bulk of the proof is in showing that $\Delta$ is an algebra homomorphism from $\TU_q(\fsl_{2, 1})$ to $\TU_q(\fsl_{2, 1})\otimes \TU_q(\fsl_{2, 1})$,
where the multiplication of the latter is defined in the standard way: for all $a, a', b, b'$ in $\TU_q(\fsl_{2, 1})$,
\[
(a\otimes b)(a'\otimes b')=a a' \otimes b b'.
\]
[Note that no sign factors are required in contrast to the case of superalgebras.]
We will prove this by showing that $\Delta$ preserves the relations \eqref{eq:relat-1} -- \eqref{eq:relat-5}.  This is clear for \eqref{eq:relat-1}.
To consider \eqref{eq:relat-2}, we note that for $i\ne j$,
\[
\begin{aligned}
\Delta(k_i) \Delta(e_j)\Delta(k_i^{-1})
&=k_ie_jk_i^{-1}\otimes k_j+1\otimes k_ie_jk_i^{-1}\\
&=q^{-1}e_j\otimes k_j+1\otimes q^{-1}e_j =q^{-1}\Delta (e_j),
\end{aligned}
\]
and similarly
$\Delta(k_i)\Delta(f_j)\Delta(k_i^{-1})=q\Delta(f_j).$
The proofs for \eqref{eq:relat-3}  and \eqref{eq:relat-4} are the same, thus are omitted.

For \eqref{eq:relat-5},  we have
\[
\begin{aligned}
&\Delta(e_i)\Delta(f_j)-\Delta(f_j)\Delta(e_i)\\
&=(e_if_j-f_je_i)\otimes k_i+k_j^{-1}\otimes (e_if_j-f_je_i)\\
&+e_ik_j^{-1}\otimes k_if_j-k_j^{-1}e_i\otimes f_jk_i.
\end{aligned}
\]
The last two terms on the right hand side cancel, and by \eqref{eq:relat-5}, the remaining two terms can be rewritten as
\[
\begin{aligned}
\delta_{ij} \frac{k_i-k_i^{-1}}{q-q^{-1}}\otimes k_i+\delta_{ij}k_j^{-1}\otimes\frac{k_i-k_i^{-1}}{q-q^{-1}}
=\delta_{ij} \frac{k_i\otimes k_i-k_i^{-1}\otimes k_i^{-1}}{q-q^{-1}}.
\end{aligned}
\]
Hence
$\Delta(e_i)\Delta(f_j)-\Delta(f_j)\Delta(e_i)=\delta_{ij} \frac{\Delta(k_i)-\Delta(k_i^{-1})}{q-q^{-1}}.
$

(2). 
Since $\Delta$ is an algebra homomorphism, in order to show its co-associativity, we only need to prove it on the generators. This can be done by computation, e.g.,
\[
\begin{aligned}
(\id\otimes \Delta)\Delta(f_i) &=f_i\otimes 1 \otimes 1 + k_i^{-1}\otimes f_i \otimes 1  + k_i^{-1}\otimes k_i^{-1}\otimes f_i \\
&=(\Delta\otimes \id)\Delta(f_i).
\end{aligned}
\]

(3).
We can easily show  that the map $\epsilon$  is an algebra homomorphism.
Denote by $\mu: \TU_q(\fsl_{2, 1})\otimes\TU_q(\fsl_{2, 1})  \longrightarrow \TU_q(\fsl_{2, 1})$  the multiplication of $\TU_q(\fsl_{2, 1})$. It is clear that for $x=e_i, f_i, k_i, k_i^{-1}$ for all $i$,
the following relation holds.
\[
\mu(\id\otimes\epsilon)\Delta(x)= \mu(\epsilon\otimes\id)\Delta(x)=x.
\]
This then holds for all $x\in \TU_q(\fsl_{2, 1})$, since $\Delta$ is also an algebra homomorphism. Hence $\epsilon$ defines a co-unit.

(4).  It is easy to prove that $S$ is an algebra anti-automorphism by showing that
it  preserves the relations \eqref{eq:relat-1} - \eqref{eq:relat-5} but reversing the order of the products. For example,  for \eqref{eq:relat-5}, we have
\[
\begin{aligned}
S(e_i f_j - f_j e_i)&= S(f_j) S(e_i) - S(e_i)S(f_j)
= k_j f_j e_i k_i^{-1} - e_i k_i^{-1} k_j f_j \\
&=  k_j (f_j e_i  - e_i f_j) k_i^{-1}
= - \delta_{i j} k_j \frac{k_i - k_i^{-1}}{q-q^{-1}} k_i^{-1} \\
&= \delta_{i j} \frac{S(k_i) - S(k_i^{-1})}{q-q^{-1}}.
\end{aligned}
\]

Now we show that $S$ has the required properties of an antipode in relation to the co-multiplication $\Delta$ and co-unit $\epsilon$. It is easy to show that for $x=e_i, f_i, k_i$,
\begin{eqnarray}\label{eq:S-def}
\mu(S\otimes\id)\Delta(x)=\mu(\id\otimes S)\Delta(x)=\epsilon(x).
\end{eqnarray}
Since $S$ is an algebra anti-automorphism, and $\Delta$, $\epsilon$ are algebra homomorphisms, the above relations hold for all elements of $x\in \TU_q(\fsl_{2, 1})$.  Hence $S$ is the required antipode.

This completes the proof.
\end{proof}

Introduce the following elements of $\widetilde{\U}_q(\fsl_{2, 1})$:
\[
\begin{aligned}
&S_{1 2}^{(+)}=e_1^2 e_2 - (q+q^{-1}) e_1 e_2 e_1 + e_2 e_1^2, \\
&S_{1 2}^{(-)}=f_1^2 f_2 - (q+q^{-1}) f_1 f_2 f_1 + f_2 f_1^2,\\
&S_{2}^{(+)}= e_2^2, \quad S_2^{(-)}= f_2^2.
\end{aligned}
\]
\begin{lemma}\label{lem:Serre-relations}
The elements $S_{1 2}^{(\pm)}$ and $S_2^{(\pm)}$ satisfy the following relations
\begin{enumerate}
\item \[
\begin{aligned}
&f_i S_{1 2}^{(+)} -  S_{1 2}^{(+)}f_i =0, \quad  f_i S_2^{(+)} -  S_2^{(+)}f_i=0, \\
&e_i S_{1 2}^{(-)} -   S_{1 2}^{(-)}e_i =0, \quad  e_i S_2^{(-)} -   S_2^{(-)}e_i=0, \quad i=1, 2.
\end{aligned}
\]
\item
\[
\begin{aligned}
\Delta(S_{1 2}^{(+)})&=S_{12}^{(+)}\otimes k_1^2k_2 + 1\otimes S_{12}^{(+)}, \\
\Delta(S_{1 2}^{(-)})&=S_{1 2}^{(-)}\otimes 1 + k_1^{-2}k_2^{-1}\otimes S_{1 2}^{(-)}, \\
\Delta(S_2^{(+)})&=S_2^{(+)}\otimes k_2^2 + 1\otimes S_2^{(+)}, \\
\Delta(S_2^{(-)})&=S_2^{(-)} \otimes 1+ k_2^{-2}\otimes S_2^{(-)}.
\end{aligned}
\]
\item
\[
\epsilon(S_{1 2}^{(+)})=0, \ \epsilon(S_{1 2}^{(-)})=0, \  \epsilon(S_2^{(+)})=0, \   \epsilon(S_2^{(-)})=0.
\]
\end{enumerate}
\end{lemma}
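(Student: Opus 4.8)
The plan is to verify all three assertions by direct computation, exploiting the Hopf algebra structure established in \lemref{lem:hopf-21} so that each claim reduces to a check on a manageable set of relations. The three parts are of increasing conceptual interest but decreasing computational burden once the right reductions are in place.

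For part (2), I would not attempt to compute $\Delta(S_{12}^{(\pm)})$ by brute expansion. Instead, since $\Delta$ is an algebra homomorphism, the cleanest route is to compute $\Delta(e_i)$ and $\Delta(e_2)$ in the tensor algebra and assemble the combination $e_1^2 e_2 - (q+q^{-1}) e_1 e_2 e_1 + e_2 e_1^2$ abstractly. A more efficient device is to observe that $S_{12}^{(+)}$ is a $q$-commutator-type expression, and that elements of the form $X \otimes k_X + 1 \otimes X$ (where $k_X$ is the weight of $X$ determined by the $k_i$-action) are stable under the Serre bracket operation; concretely, the weight of $S_{12}^{(+)}$ is $2\alpha_1 + \alpha_2$, which accounts for the factor $k_1^2 k_2$. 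I would therefore verify that the leading tensor leg reproduces $S_{12}^{(+)}$ with grouplike factor $k_1^2 k_2$, that the trailing leg gives $1 \otimes S_{12}^{(+)}$, and\textemdash this is the crux\textemdash that all intermediate mixed terms cancel. The analogous computations for $S_{12}^{(-)}$, $S_2^{(+)} = e_2^2$, and $S_2^{(-)} = f_2^2$ follow the same pattern, with $S_2^{(+)}$ being immediate from $\Delta(e_2)^2 = (e_2 \otimes k_2 + 1 \otimes e_2)^2$.

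For part (1), I would use part (2) together with the relations \eqref{eq:relat-5}. The point is that $S_{12}^{(+)}$ has weight $2\alpha_1 + \alpha_2$, so it commutes with the Cartan part in a controlled way, and one checks that $[f_i, S_{12}^{(+)}] = 0$ by expanding the Serre element and repeatedly applying $e_i f_j - f_j e_i = \delta_{ij}(k_i - k_i^{-1})/(q - q^{-1})$ to push each $f_i$ through. The key simplification is that each commutator $[f_i, \cdot]$ lowers the $e$-weight by one root, and the resulting terms reorganise into a multiple of a lower Serre-type expression that vanishes by the defining relations; alternatively one may use the standard fact that the quantum Serre elements lie in the kernel of the adjoint action of the opposite generators. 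For $S_2^{(\pm)} = e_2^2, f_2^2$ the vanishing of the commutators with $e_i, f_i$ reduces to the Serre relations \eqref{eq:relat-6}, \eqref{eq:relat-7} and the computation $[f_2, e_2^2] = (k_2 - k_2^{-1})/(q-q^{-1}) \cdot e_2 + e_2 \cdot (k_2 - k_2^{-1})/(q-q^{-1})$, which one checks vanishes using \eqref{eq:relat-4}.

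Part (3) is then immediate: since $\epsilon$ is an algebra homomorphism with $\epsilon(e_i) = \epsilon(f_i) = 0$, each $S_{12}^{(\pm)}$ and $S_2^{(\pm)}$ is a sum of monomials each containing at least one factor $e_i$ or $f_i$, so $\epsilon$ annihilates every such term. The main obstacle in the whole lemma is the cancellation of mixed terms in part (2) for $S_{12}^{(\pm)}$: the expansion of $\Delta$ applied to a degree-three expression produces many cross terms mixing the two tensor legs, and showing they all cancel requires careful bookkeeping of the $q$-powers arising from commuting $e_i$ past the grouplike $k_i$ factors via \eqref{eq:relat-3}, \eqref{eq:relat-4}. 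I expect this bookkeeping, rather than any conceptual difficulty, to be where the real work lies.
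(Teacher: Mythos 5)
Your proposal is correct and follows essentially the same route as the paper's proof: every part is verified by direct computation, with the crux---exactly as you identify---being the cancellation of the mixed terms in $\Delta(S_{12}^{(\pm)})$ and the use of the degenerate relation \eqref{eq:relat-4} (i.e.\ $k_2 e_2 k_2^{-1}=-e_2$) to kill both $[f_2,S_2^{(+)}]$ and the cross terms in $\Delta(S_2^{(+)})=(e_2\otimes k_2+1\otimes e_2)^2$. The only cosmetic differences are that the paper proves part (1) first and independently of part (2) (which your computations also effectively do, despite the stated dependence), organises the part-(2) calculation through the intermediate element $[e_1,e_2]_{q^{-1}}=e_1e_2-q^{-1}e_2e_1$ rather than by raw expansion, and uses the automorphism $\omega\colon e_i\mapsto f_i,\ f_i\mapsto e_i,\ k_i\mapsto k_i^{-1}$ of \eqref{eq:auto} to deduce the $S^{(-)}$ statements from the $S^{(+)}$ ones.
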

\begin{proof}
(1).
For any elements $x, y$ in $\TU_q(\fsl_{2, 1})$, we write
$
[x, y] = x y -y x.
$
We have
\[
\begin{aligned}
{[f_1, S_{1 2}^{(+)}]}
=[f_1, e_1^2] e_2-(q+q^{-1})[f_1, e_1]e_2e_1 -(q+q^{-1})e_1e_2[f_1, e_1]+e_2 [f_1, e_1^2].
\end{aligned}
\]
The formula
\begin{eqnarray}\label{eq:f-e-power}
[f_1, e_1^k] = - [k]_q e_1^{k-1} \frac{k_1 q^{k-1} - k_1^{-1}q^{1-k}}{q-q^{-1}} \quad \text{with } \ [k]_q:=\frac{q^k - q^{-k}}{q-q^{-1}}
\end{eqnarray}
in the $k=2$ case leads to
$[f_1, e_1^2]=  - (q+q^{-1}) e_1 \frac{k_1 q - k_1^{-1}q^{-1}}{q-q^{-1}}$.
Hence
\[
\begin{aligned}
{[f_1, S_{1 2}^{(+)}]}
&= - (q+q^{-1}) e_1 \frac{k_1 q - k_1^{-1}q^{-1}}{q-q^{-1}} e_2
- (q+q^{-1}) e_2  e_1 \frac{k_1 q - k_1^{-1}q^{-1}}{q-q^{-1}}\\
&
+(q+q^{-1})  \frac{k_1 - k_1^{-1}}{q-q^{-1}} e_2e_1 + (q+q^{-1})e_1e_2\frac{k_1 - k_1^{-1}}{q-q^{-1}} \\
&= - (q+q^{-1}) e_1 e_2\frac{k_1  - k_1^{-1}}{q-q^{-1}}
- (q+q^{-1}) e_2  e_1 \frac{k_1 q - k_1^{-1}q^{-1}}{q-q^{-1}}\\
&
+(q+q^{-1}) e_2e_1 \frac{k_1 q- k_1^{-1}q^{-1}}{q-q^{-1}}  + (q+q^{-1})e_1e_2\frac{k_1 - k_1^{-1}}{q-q^{-1}}\\
&=0.
\end{aligned}
\]
We can also easily prove that $[f_2, S_{1 2}^{(+)}]=0$.

To prove $[f_i,  S_2^{(+)}]=0$ for all $i$, we note that $[f_1, S_2^{(+)}]=0$  by \eqref{eq:relat-5}.  Now
\[\begin{aligned}
&[f_2, S_2^{(+)}]=[f_2, e_2] e_2 +e_2 [f_2, e_2]
&=-\frac{k_2-k_2^{-1}}{q-q^{-1}}e_2 -  e_2 \frac{k_2-k_2^{-1}}{q-q^{-1}}.
\end{aligned}
\]
Since $k_2 e_2 k_2^{-1}=-e_2$ by \eqref{eq:relat-4}, we immediately see that the right hand side  is zero.

By using the following obvious automorphism of $\TU_q(\fsl_{2, 1})$,
\begin{eqnarray}\label{eq:auto}
\omega:  \widetilde{\U}_q(\fsl_{2, 1})\longrightarrow \widetilde{\U}_q(\fsl_{2, 1}),
\quad e_i\mapsto f_i, \  f_i\mapsto e_i, \ k_i \mapsto k_i^{-1}, \quad \forall i,
\end{eqnarray}
we obtain
\[
[e_i, S_{12}^{(-)}]=\omega([f_i, S_{12}^{(+)}]) = 0, \quad
[e_i, S_2^{(-)}]=\omega([f_i, S_2^{(+)}])=0, \quad \forall i.
\]
This proves the first part of the lemma.

(2).
The second part is also proven by direct computation.

Consider $\Delta(S_{12}^{(+)})$. Let us write
$[e_1, e_2]_{q^{-1}}=e_1 e_2 - q^{-1} e_2 e_1$, and consider its image under $\Delta$. 
We have 
\[
\begin{aligned}
\Delta(e_1 e_2) &= e_1 e_2\otimes k_1 k_2 + e_2\otimes e_1 k_2 +  q^{-1}e_1\otimes  e_2 k_1 + 1\otimes e_1 e_2, \\
\Delta(e_2 e_1) &= e_2 e_1\otimes k_1 k_2 + e_1\otimes e_2 k_1 +  q^{-1}e_2\otimes  e_1 k_2 + 1\otimes e_2 e_1.
\end{aligned}
\]
Combining these two equations, we obtain 
\begin{eqnarray}\label{eq:DeltaE13}
&\Delta([e_1, e_2]_{q^{-1}}) = [e_1, e_2]_{q^{-1}}\otimes k_1 k_2 + (1-q^{-2})e_2\otimes e_1 k_2 + 1\otimes [e_1, e_2]_{q^{-1}}.
\end{eqnarray}
Now we have 
\[
\begin{aligned}
\Delta(S_{12}^{(+)})&= e_1\otimes k_1\Delta([e_1, e_2]_{q^{-1}}) -  q\Delta([e_1, e_2]_{q^{-1}})e_1\otimes k_1\\
&\quad + 1\otimes e_1 \Delta([e_1, e_2]_{q^{-1}})  - q \Delta([e_1, e_2]_{q^{-1}}) 1\otimes e_1.
\end{aligned}
\]
Write the first line on the right hand side as $L1$, and the second line as $L2$. By using equation \eqref{eq:DeltaE13}, we easily obtain 
\[
\begin{aligned}
L1&= e_1[e_1, e_2]_{q^{-1}}\otimes k_1^2 k_2 + q^2(1-q^{-2})e_1e_2\otimes e_1 k_1 k_2 + q e_1\otimes [e_1, e_2]_{q^{-1}}k_1\\
&\quad - q ([e_1, e_2]_{q^{-1}}e_1\otimes k_1^2 k_2 + (1-q^{-2})e_2 e_1\otimes e_1 k_1 k_2 + e_1\otimes [e_1, e_2]_{q^{-1}} k_1)\\
&=S_{12}^{(+)}\otimes k_1^2 k_2 + (q^2-1)[e_1, e_2]_{q^{-1}}\otimes e_1 k_1 k_2,\\
L2&=[e_1, e_2]_{q^{-1}}\otimes e_1 k_1 k_2 + (1-q^{-2})e_2\otimes e_1 e_1 k_2 + 1\otimes e_1 [e_1, e_2]_{q^{-1}}\\
&\quad - q (q[e_1, e_2]_{q^{-1}}\otimes e_1 k_1 k_2 + q^{-1}(1-q^{-2})e_2\otimes e_1^2 k_2 + 1\otimes [e_1, e_2]_{q^{-1}}e_1)\\
&= (1-q^2)[e_1, e_2]_{q^{-1}}\otimes e_1 k_1 k_2 +  1\otimes S_{12}^{(+)},
\end{aligned}
\]
which immediately lead to 
\[
\begin{aligned}
\Delta(S_{12}^{(+)})=L1+L2
=S_{12}^{(+)}\otimes k_1^2 k_2+  1\otimes S_{12}^{(+)}.
\end{aligned}
\]
This proves the first relation in part (2).  The second relation can be proved in exactly the same way.

For the third relation in part (2), we have
\[
\begin{aligned}
\Delta(S_2^{(+)})&= (e_2\otimes k_2+1\otimes e_2)^2\\
&=S_2^{(+)}\otimes k_2^2+1\otimes S_2^{(+)}
+ e_2\otimes k_2 e_2 + e_2\otimes e_2 k_2,
\end{aligned}
\]
where the last two terms cancel upon using $k_2 e_2 = -e_2 k_2$, leading to
\[
\Delta(S_2^{(+)})=S_2^{(+)}\otimes k_2^2+1\otimes S_2^{(+)}.
\]
The last relation in part (2) can be proved in the same way.

The third part of the lemma is clear.
\end{proof}

\begin{theorem}\label{thm:sl21} Let $J$ be the two-sided ideal in $\TU_q(\fsl_{2, 1})$ generated by the elements
$ S_{1 2}^{(+)},  S_{1 2}^{(-)},   S_2^{(+)}$ and $ S_2^{(-)}$.  Then $J$ is a Hopf ideal, and
$\U_q(\fsl_{2, 1})=\TU_q(\fsl_{2, 1})/J$ is a Hopf algebra  with the induced co-multiplication, co-unit and antipode.
\end{theorem}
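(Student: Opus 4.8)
The plan is to prove that $J$ is a Hopf ideal, which by definition means it is a two-sided ideal satisfying the three conditions
\[
\Delta(J)\subseteq J\otimes\TU_q(\fsl_{2,1}) + \TU_q(\fsl_{2,1})\otimes J, \quad \epsilon(J)=0, \quad S(J)\subseteq J.
\]
Once these are established, the quotient $\U_q(\fsl_{2,1})=\TU_q(\fsl_{2,1})/J$ automatically inherits the co-multiplication, co-unit, and antipode from \lemref{lem:hopf-21}, since each structure map descends through the quotient precisely when $J$ is a Hopf ideal. First I would recall that $J$ is by construction a two-sided ideal, so the only work is to verify the three displayed compatibility conditions on the four generators $S_{12}^{(\pm)}$, $S_2^{(\pm)}$ of $J$.

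The co-unit and co-multiplication conditions are essentially immediate from \lemref{lem:Serre-relations}. Part (3) of that lemma gives $\epsilon(S_{12}^{(\pm)})=\epsilon(S_2^{(\pm)})=0$ directly, which is the condition $\epsilon(J)=0$ on generators; since $\epsilon$ is an algebra homomorphism, it then vanishes on all of $J$. For the co-multiplication condition, part (2) of \lemref{lem:Serre-relations} exhibits each $\Delta$ of a generator in the form $g\otimes(\text{grouplike}) + 1\otimes g$ where $g\in J$, which manifestly lies in $J\otimes\TU_q(\fsl_{2,1}) + \TU_q(\fsl_{2,1})\otimes J$. To extend this from generators to all of $J$, I would use that $\Delta$ is an algebra homomorphism together with the standard fact that the set of elements satisfying the Hopf-ideal coproduct condition relative to an ideal $J$ is closed under left and right multiplication by arbitrary elements; thus $\Delta(J)$ lands in the required subspace.

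For the antipode condition $S(J)\subseteq J$, I would verify $S(S_{12}^{(\pm)})\in J$ and $S(S_2^{(\pm)})\in J$ on the generators, again using that $S$ is an algebra anti-automorphism to propagate the conclusion to all of $J$. The cleanest route uses the automorphism $\omega$ from \eqref{eq:auto} together with the fact, following from the standard Hopf-algebra identity, that the antipode sends each Serre element to a scalar or grouplike multiple of the opposite-type Serre element. Concretely, I expect $S(S_{12}^{(+)})$ to be proportional to $S_{12}^{(-)}$ times a product of $k_i^{\pm1}$, and similarly $S(S_2^{(+)})$ proportional to $S_2^{(-)}$ times a power of $k_2$; since the $k_i^{\pm1}$ are units in $\TU_q(\fsl_{2,1})$, membership in $J$ is preserved. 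These identities can be checked by applying $S$ to the defining expressions and using that $S$ reverses products, or deduced more slickly from the relation $S\circ\omega = \omega\circ S$ up to grouplike factors.

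The main obstacle is the antipode computation: unlike the co-unit and co-multiplication, which are handed over directly by \lemref{lem:Serre-relations}, the relation $S(J)\subseteq J$ requires an honest calculation (or a symmetry argument via $\omega$) to see that $S$ maps each generator of $J$ back into $J$ rather than producing genuinely new elements. I would handle this by first computing $S$ on the products $e_i, f_i$ appearing in the Serre elements, then reassembling; the key simplification is that conjugation by the grouplike $k_i$ only rescales the cubic expressions by powers of $q$, so no new relations beyond those already in $J$ are generated.
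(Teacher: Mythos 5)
Your overall strategy is the same as the paper's: the theorem is deduced from Lemma \ref{lem:Serre-relations}, with parts (2) and (3) supplying the coideal conditions on the four generators, and the two-sided ideal property of $J$ together with the fact that $\Delta$, $\epsilon$ are algebra maps propagating those conditions to all of $J$, exactly as you describe. That portion of your write-up is correct and is precisely what the paper's one-line proof leaves implicit.

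The antipode step, however, contains a concrete error. You claim that $S$ sends each Serre element to a grouplike multiple of the \emph{opposite-type} Serre element, e.g.\ $S(S_{12}^{(+)})\propto S_{12}^{(-)}\cdot(\text{product of }k_i^{\pm1})$. This is false: $S$ is an anti-automorphism with $S(e_i)=-e_ik_i^{-1}$, so it maps the subalgebra generated by the $e_i$ and $k_i^{\pm1}$ into itself; consequently $S(S_{12}^{(+)})$ is a polynomial in $e_1,e_2,k_i^{\pm1}$ and can never be a multiple of $S_{12}^{(-)}$, which is a polynomial in the $f_i$. (It is the automorphism $\omega$ of \eqref{eq:auto} that exchanges the two types, not $S$; also $S\circ\omega$ and $\omega\circ S$ agree only up to scalar factors, so that identity cannot produce type-swapping either.) The correct statement, and the quickest route to $S(J)\subseteq J$, follows from part (2) of Lemma \ref{lem:Serre-relations} itself: each generator of $J$ is skew-primitive, so applying the antipode axiom \eqref{eq:S-def} in the form $\mu(S\otimes\id)\Delta(x)=\epsilon(x)1=0$, where $\mu$ is the multiplication, to $\Delta(S_{12}^{(+)})=S_{12}^{(+)}\otimes k_1^2k_2+1\otimes S_{12}^{(+)}$ gives $S(S_{12}^{(+)})k_1^2k_2+S_{12}^{(+)}=0$, i.e.
\[
S(S_{12}^{(+)})=-S_{12}^{(+)}(k_1^2k_2)^{-1},\qquad
S(S_2^{(+)})=-S_2^{(+)}k_2^{-2},
\]
and likewise $S(S_{12}^{(-)})=-k_1^2k_2\,S_{12}^{(-)}$ and $S(S_2^{(-)})=-k_2^2\,S_2^{(-)}$. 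Each is a unit multiple of the \emph{same} generator, hence lies in $J$, and your propagation argument (anti-multiplicativity of $S$ plus the ideal property of $J$) then yields $S(J)\subseteq J$. With the claimed identities replaced by these, your proof is complete and coincides with the paper's argument.
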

\begin{proof}
This immediately follows Lemma \ref{lem:Serre-relations} .
\end{proof}

%
%
%
%
\begin{remark}
We can boost $\U_q(\fsl_{2, 1})$ to a degenerate quantum general linear group $\U_q(\gl_{2, 1})$ in the obvious way.
 \end{remark}

The representation theory of $\U_q(\fsl_{2, 1})$ will be thoroughly treated in Section \ref{sect:reps-sl21}.

\section{Degenerate quantum general linear groups}\label{sect:typeA}

 In this section, we generalise the definition of the degenerate quantum special linear group $\U_q(\fsl_{2, 1})$ to arbitrary ranks.  To do this, it is easier to construct the corresponding degenerate quantum general linear group instead.

\subsection{The degenerate quantum general linear group $\U_q(\gl_{m, n})$}

Given a pair of positive integers $m, n$, we introduce the sets $I=\{1, 2, \dots, m+n\}$ and $I'=I\backslash\{m+n\}$.
Let $p=-q^{-1}$, and set $q_a=q $ if $a\le m$, and $q_a=p$ if $a>m$.

\begin{definition} \label{def:degen-gl} Let $\U_q(\gl_{m, n})$ be the unital associative algebra over $\C(q)$ generated by  the elements of  $\{e_a, f_a, K_b, K_b^{-1}\mid a\in I', \ b\in I\}$ subject to the following relations.
\begin{eqnarray}
&K_a K_a^{-1} =1,  \quad   K_a^{\pm 1} K_b^{\pm 1} =K_b^{\pm 1} K_a^{\pm 1}, \label{eq:gl-1}\\
&K_a e_b K_a^{-1} = q_a^{\delta_{a b}-\delta_{a, b+1}} e_b,  \label{eq:gl-2}\\
&K_a f_b K_a^{-1} = q_a^{-\delta_{a b}+\delta_{a, b+1}} f_b, \label{eq:gl-3}\\
&e_a f_b - f_b e_a = \delta_{a b} \frac{k_a - k_a^{-1}}{q_a-q_a^{-1}},  \quad  \text{with } \
k_a = K_aK_{a+1}^{-1},  \label{eq:gl-4}\\
&e_a e_b = e_b e_a, \quad f_a f_b = f_b f_a,  \quad |a-b|>1, \label{eq:gl-5}\\
& e_a^2 e_{a\pm 1} - (q_a+q_a^{-1}) e_a e_{a\pm 1} e_a +  e_{a\pm 1}e_a^2=0, \quad \ a\ne m, \label{eq:gl-6}\\
& f_a^2 f_{a\pm 1} - (q_a+q_a^{-1}) f_a f_{a\pm 1} f_a  +  f_{a\pm 1}f_a^2=0,
\quad \ a\ne m, \label{eq:gl-7}\\
& e_m^2=f_m^2=0,\label{eq:gl-8}\\
& e_m E_{m-1, m+2} - E_{m-1, m+2} e_m=0,\label{eq:gl-9}\\
&f_m E_{m+2, m-1} -  E_{m+2, m-1} f_m=0, \label{eq:gl-10}
\end{eqnarray}
where $E_{m-1, m+2}$ and $E_{m+2, m-1}$ are defined by
\[
\begin{aligned}
&E_{m-1, m+2}:=E_{m-1, m+1} e_{m+1} - q_{m+1}^{-1} e_{m+1} E_{m-1, m+1}, \\
&E_{m+2, m-1}:= f_{m+1} E_{m+1, m-1} - q_{m+1}E_{m+1, m-1} f_{m+1}, \\
&E_{m-1, m+1} := e_{m-1} e_m - q_m^{-1} e_m e_{m-1}, \\
&E_{m+1, m-1} := f_m f_{m-1}- q_m f_{m-1} f_m.\\
\end{aligned}
\]
Call $\U_q(\gl_{m, n})$ a degenerate quantum general linear group.
Denote by $\U_q(\fsl_{m, n})$ the subalgebra generated by the set $\{e_a, f_a, k_a, k_a^{-1}\mid a\in I'\}$, and call it a degenerate quantum special linear group.
\end{definition}

\begin{remark}
If $m=1$ or $n=1$, the relations \eqref{eq:gl-9} and \eqref{eq:gl-10} become vacuous.
\end{remark}
\begin{remark}
Note from the definition that
\[
k_m e_m k_m^{-1}= q_m q_{m+1}e_m = - e_m, \quad
k_m f_m k_m^{-1}= q_m^{-1} q_{m+1}^{-1}f_m = - f_m.
\]
In the case $m=2$ and $n=1$, these are the relations \eqref{eq:relat-4}.
\end{remark}

\subsection{Hopf algebraic structure of $\U_q(\gl_{m, n})$}

Now we examine the structure of the degenerate quantum general linear group $\U_q(\gl_{m, n})$.
\begin{lemma} \label{lem:hopf-mn}
Let $\widetilde{\U}_q(\gl_{m, n})$  be the unital algebra over $\C(q)$ generated by the set of elements $\{e_a, f_a, K_b, K_b^{-1}\mid a\in I', \ b\in I\}$ subject to the relations from \eqref{eq:gl-1} to \eqref{eq:gl-8}  only.  Then
$\widetilde{\U}_q(\gl_{m, n})$ has the structure of a Hopf algebra with co-multiplication
$\Delta: \widetilde{\U}_q(\gl_{m, n}) \longrightarrow \widetilde{\U}_q(\gl_{m, n})\otimes \widetilde{\U}_q(\gl_{m, n})$
\[
\begin{aligned}
\Delta(e_a)=e_a\otimes k_a + 1\otimes e_a, \  \
\Delta(f_a)=f_a\otimes 1 + k_a^{-1}\otimes f_a, \  \
\Delta(K_b)=K_b\otimes K_b,
\end{aligned}
\]
co-unit $\epsilon:  \widetilde{\U}_q(\gl_{m, n}) \longrightarrow \C(q)$
\[
\epsilon(e_a)=\epsilon(f_a)=0, \quad \epsilon(K_b)=1,
\]
and antipode $S: \widetilde{\U}_q(\gl_{m, n}) \longrightarrow \widetilde{\U}_q(\gl_{m, n})$
\[
S(e_a)=- e_a k_a^{-1}, \quad S(f_a)= - k_a f_a, \quad S(K_b)=K_b^{-1},
\]
for all  $a\in I'$ and $b\in I$.
\end{lemma}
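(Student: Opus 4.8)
The plan is to reduce everything to the rank-two computations already carried out in \lemref{lem:hopf-21} and \lemref{lem:Serre-relations}. Write $\TU=\TU_q(\gl_{m, n})$, and let $\cA$ be the unital algebra presented by the relations \eqref{eq:gl-1}--\eqref{eq:gl-5} alone. I would first show that the stated formulas endow $\cA$ with the structure of a Hopf algebra, and then show that the two-sided ideal $\cI$ of $\cA$ generated by the left-hand sides of the Serre-type relations \eqref{eq:gl-6}--\eqref{eq:gl-8} is a Hopf ideal; the Hopf structure then descends to $\TU=\cA/\cI$.

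For the first step I follow the four-part scheme of \lemref{lem:hopf-21}. The well-definedness of $\Delta$, $\epsilon$ and $S$ on $\cA$ amounts to their preserving the relations \eqref{eq:gl-1}--\eqref{eq:gl-5} (with $S$ reversing the order of products), while co-associativity, the co-unit axiom and the antipode axiom each reduce --- because $\Delta,\epsilon$ are homomorphisms and $S$ an anti-homomorphism --- to the short checks on the generators $e_a,f_a,K_b$ done in parts (2)--(4) of \lemref{lem:hopf-21}, with $k_i$ replaced by $k_a=K_aK_{a+1}^{-1}$. That $\Delta$ preserves \eqref{eq:gl-1}--\eqref{eq:gl-5} is seen as follows: \eqref{eq:gl-1} is immediate; since all the $K_b$ commute one has $K_ak_b=k_bK_a$, so using $\Delta(K_a)=K_a\ot K_a$ the weight relations \eqref{eq:gl-2}--\eqref{eq:gl-3} hold with the eigenvalue $q_a^{\delta_{ab}-\delta_{a,b+1}}$ appearing identically on both tensor legs; relation \eqref{eq:gl-4} is the commutator computation of \lemref{lem:hopf-21}(1) with $q$ replaced by $q_a$, the two mixed terms cancelling and the diagonal terms recombining correctly; and for the one genuinely new family \eqref{eq:gl-5}, when $|a-b|>1$ every Kronecker delta in the weights of $e_b$ under $K_a$ and $K_{a+1}$ vanishes, so $k_a$ commutes with $e_b$ and the cross terms of $\Delta(e_a)\Delta(e_b)$ and $\Delta(e_b)\Delta(e_a)$ cancel using $e_ae_b=e_be_a$. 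The checks for $\epsilon$ and $S$ are analogous.

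For the second step I would generalise parts (2) and (3) of \lemref{lem:Serre-relations}. For each generator $S$ of $\cI$ --- the left-hand side of \eqref{eq:gl-6} or \eqref{eq:gl-7}, or $e_m^2$, $f_m^2$ --- one shows that $S$ is skew-primitive,
\[
\Delta(S)=S\ot g+1\ot S \quad(e\text{-type}),\qquad \Delta(S)=S\ot 1+g^{-1}\ot S \quad(f\text{-type}),
\]
with $g=k_a^2k_{a\pm1}$ for \eqref{eq:gl-6}--\eqref{eq:gl-7} and $g=k_m^2$ for \eqref{eq:gl-8}, and that $\epsilon(S)=0$. Skew-primitivity yields the coideal condition $\Delta(\cI)\subseteq \cI\ot\cA+\cA\ot\cI$; combined with $\epsilon(\cI)=0$ and the antipode identity already established on $\cA$, it forces $S(S)=-Sg^{-1}$ (and $-gS$ in the $f$-type case), both of which lie in $\cI$, so $\cI$ is stable under $S$. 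Hence $\cI$ is a Hopf ideal and the Hopf structure descends to $\TU$.

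The main obstacle is the bookkeeping in establishing these skew-primitivity identities uniformly over all admissible $a$. Each reduces to a two-node computation of exactly the shape of $\Delta(S_{12}^{(+)})$ and $\Delta(S_2^{(+)})$ in \lemref{lem:Serre-relations}, with $q$ replaced by $q_a\in\{q,p\}$, but one must separately handle the cases where the doubled node carries $q_a=p=-q^{-1}$ (so that $q_a+q_a^{-1}=-(q+q^{-1})$) and where the neighbouring node is the special node $m$, at which $k_me_mk_m^{-1}=-e_m$. The one point needing care is to confirm that the coproduct always reproduces exactly the coefficient $q_a+q_a^{-1}$ of the Serre relation, so that all intermediate cross terms cancel and only $S\ot g+1\ot S$ survives; no phenomenon beyond those already met in \lemref{lem:Serre-relations} arises.
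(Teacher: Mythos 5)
Your proposal is correct, but it takes a genuinely different route from the paper. The paper proves \lemref{lem:hopf-mn} in a few lines by observing that every relation among \eqnref{eq:gl-1}--\eqnref{eq:gl-8} lives inside one of four distinguished subalgebras already known to carry the stated Hopf structure: $\U_q(\gl_m)$ and $\U_p(\gl_n)$ (standard Drinfeld--Jimbo theory), and the two rank-two degenerate subalgebras on the node pairs $\{m-1,m\}$ and $\{m,m+1\}$, isomorphic to $\U_q(\fsl_{2,1})$ and $\U_p(\fsl_{2,1})$ and handled by Theorem \ref{thm:sl21}; the remaining ``cross'' verifications (e.g.\ that $\Delta$ preserves \eqnref{eq:gl-2} when $a$ and $b$ lie on opposite sides of $m$, or \eqnref{eq:gl-5} with $a<m<b$) are easy and left implicit. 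You instead transplant the whole Section \ref{sect:sl3} strategy to arbitrary rank: note that your intermediate algebra $\cA$, presented by \eqnref{eq:gl-1}--\eqnref{eq:gl-5} alone, is \emph{not} the paper's $\TU_q(\gl_{m,n})$, which already contains \eqnref{eq:gl-6}--\eqnref{eq:gl-8}. You first establish the Hopf structure on $\cA$ by generator-level checks (the analogue of \lemref{lem:hopf-21}), then prove the cubic Serre and nilpotency elements are skew-primitive with group-likes $k_a^2k_{a\pm 1}$ and $k_m^2$ (the analogue of \lemref{lem:Serre-relations}), and pass to the quotient. Your case analysis is the right one: each skew-primitivity identity is either the standard type-$A$ computation or literally the computation of \lemref{lem:Serre-relations} under the substitution $(e_1,e_2,q)\mapsto(e_{m-1},e_m,q)$ or $(e_1,e_2,q)\mapsto(e_{m+1},e_m,p)$, and your Hopf-ideal argument (the coideal property from skew-primitivity, plus stability under the antipode because the antipode axiom forces each Serre element $s$ to satisfy $S(s)=-s\,g^{-1}$) is sound. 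What the paper's route buys is brevity, recycling Theorem \ref{thm:sl21} and known quantum-group facts wholesale; what yours buys is a self-contained, uniform argument that records explicitly, at every rank, the group-like attached to each Serre element (the paper does this only at rank two) and that fills in the cross-relation checks the paper glosses over.
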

\begin{proof}
Observe that the following facts readily imply the lemma.
\begin{itemize}
\item  the subalgebra generated by $\{K_b, K_b^{-1}, e_a, f_a\mid1\le  a<m, \ 1\le b\le m\}$ is isomorphic to the quantum group $\U_q(\gl_m)$ with the standard Hopf algebra structure;
\item the subalgebra generated by $\{K_b,  K_b^{-1}, e_a, f_a\mid 1\le a-m< n, \  1\le b-m\le n\}$ is isomorphic to the quantum group $\U_p(\gl_n)$ with the standard Hopf algebra structure;
\item the subalgebra generated by $\{k_a, e_a, f_a\mid a=m-1, m\}$ is isomorphic to $\U_q(\fsl_{2,1})$
with the Hopf algebra structure as given in Theorem \ref{thm:sl21}; and
\item the subalgebra generated by $\{k_a, e_a, f_a\mid a=m, m+1\}$  is isomorphic to $\U_p(\fsl_{2,1})$
with the Hopf algebra structure as given in Theorem \ref{thm:sl21}.
\end{itemize}
The new features not present in the context of ordinary Drinfeld-Jimbo quantum groups are in
the third and fourth dot points, which have already been dealt with in Section \ref{sect:sl3}.
\end{proof}

Let us define the following elements of $\widetilde{\U}_q(\gl_{m, n})$.
\[
\begin{aligned}
Q^+ := e_m E_{m-1, m+2} - E_{m-1, m+2} e_m, \quad
Q^-:=f_m E_{m+2, m-1} -  E_{m+2, m-1} f_m.
\end{aligned}
\]
\begin{lemma}\label{lem:Serre-Q} The elements $Q^{\pm}$ satisfy the following relations in  $\widetilde{\U}_q(\gl_{m, n})$.
\begin{eqnarray}
&f_a Q^+ -  Q^+ f_a =0, \quad  e_a Q^- - Q^- e_a=0 , \quad \forall a\in I';   \label{eq:Q-1}\\
&\Delta(Q^+)=Q^+\otimes k_{m-1} k_m^2 k_{m+1} + 1 \otimes Q^+,  \label{eq:Q-2}\\
&\Delta(Q^-)=Q^-\otimes 1 +  (k_{m-1} k_m^2 k_{m+1})^{-1} \otimes Q^-.  \label{eq:Q-3}
\end{eqnarray}
\end{lemma}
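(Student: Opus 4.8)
The plan is to establish the three assertions for $Q^+$ by direct computation and then obtain those for $Q^-$ by applying a symmetry. I would begin with \eqref{eq:Q-1} for $Q^+$. Since $Q^+$ is a polynomial in $e_{m-1}, e_m, e_{m+1}$ and $f_a e_b = e_b f_a$ whenever $a\ne b$ by \eqref{eq:gl-4}, the relation $f_a Q^+ - Q^+ f_a = 0$ holds trivially for $a\notin\{m-1, m, m+1\}$, so only the three values $a=m-1, m, m+1$ require attention. For each I would exploit that $[f_a, -]$ is a derivation of the algebra, reduce $[f_a, Q^+]$ through the Jacobi identity to the inner brackets $[f_a, E_{m-1, m+1}]$ and $[f_a, E_{m-1, m+2}]$ (noting $[f_{m-1}, e_m]=[f_{m-1}, e_{m+1}]=[f_{m+1}, e_{m-1}]=[f_{m+1}, e_m]=0$), and then commute the Cartan elements $\frac{k_a-k_a^{-1}}{q_a-q_a^{-1}}$ across the remaining generators using \eqref{eq:gl-2}. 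The essential simplifications come from $e_m^2=0$ in \eqref{eq:gl-8} together with the identity $p=-q^{-1}$, which force the spurious terms to cancel in pairs; for instance one finds $[f_m, E_{m-1, m+2}]=0$ outright, while $k_m$ commutes with $E_{m-1, m+2}$ for weight reasons, so that $[f_m, Q^+]=0$.

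Next I would prove \eqref{eq:Q-2} by computing $\Delta(Q^+)$ in stages, exactly as in the proof of \lemref{lem:Serre-relations}. Using that $\Delta$ is an algebra homomorphism and moving $k$-factors across tensor legs by \eqref{eq:gl-2}, one first obtains
\[
\Delta(E_{m-1, m+1}) = E_{m-1, m+1}\otimes k_{m-1}k_m + (1-q^{-2})\,e_m\otimes e_{m-1}k_m + 1\otimes E_{m-1, m+1},
\]
then feeds this into $\Delta(E_{m-1, m+2}) = \Delta(E_{m-1, m+1})\Delta(e_{m+1}) - q_{m+1}^{-1}\Delta(e_{m+1})\Delta(E_{m-1, m+1})$, and finally into $\Delta(Q^+)=[\Delta(e_m), \Delta(E_{m-1, m+2})]$. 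The assertion is that every term of intermediate weight cancels, leaving only $Q^+\otimes k_{m-1}k_m^2 k_{m+1}$ and $1\otimes Q^+$; this is where $e_m^2=0$ and $p=-q^{-1}$ are invoked repeatedly.

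Finally I would deduce the statements for $Q^-$ from those for $Q^+$ by means of the algebra automorphism $\omega$ of $\widetilde{\U}_q(\gl_{m, n})$ defined, in analogy with \eqref{eq:auto}, by $e_a\mapsto f_a$, $f_a\mapsto e_a$, $K_b\mapsto K_b^{-1}$. A short computation using $p=-q^{-1}$ gives $\omega(E_{m-1, m+1})=-q^{-1}E_{m+1, m-1}$ and $\omega(E_{m-1, m+2})=-E_{m+2, m-1}$, whence $\omega(Q^+)=-Q^-$. Applying $\omega$ to $[f_a, Q^+]=0$ then yields $[e_a, Q^-]=0$, which completes \eqref{eq:Q-1}. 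For the coproduct I would check on generators that $\omega$ is a coalgebra anti-morphism, $\Delta\circ\omega=(\omega\otimes\omega)\circ\Delta^{\mathrm{op}}$, so that applying $\omega\otimes\omega$ to the opposite of \eqref{eq:Q-2} produces \eqref{eq:Q-3}.

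The main obstacle is the computation of $\Delta(Q^+)$ in \eqref{eq:Q-2}: the intermediate expressions carry many terms of intermediate weight, and verifying their cancellation demands careful and repeated use of both $e_m^2=0$ and the special value $p=-q^{-1}$. The derivation computations behind \eqref{eq:Q-1} are of the same delicate nature but much shorter, and once \eqref{eq:Q-1} and \eqref{eq:Q-2} are secured the passage to $Q^-$ through $\omega$ is routine.
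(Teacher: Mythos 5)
Your proposal is correct, and for \eqref{eq:Q-1} and \eqref{eq:Q-2} it is essentially the paper's own argument: the commutators $[f_a,Q^+]$ are reduced, exactly as in the paper's Appendix A, to the inner brackets $[f_a,E_{m-1,m+1}]$ and $[f_a,E_{m-1,m+2}]$, and are killed using $e_m^2=0$, the weight argument showing $k_m$ commutes with $E_{m-1,m+2}$, and cancellations such as $q_{m+1}^{-1}+q_{m-1}=0$ forced by $p=-q^{-1}$; likewise your staged computation of $\Delta(E_{m-1,m+1})$ (your displayed formula coincides with the paper's), then $\Delta(E_{m-1,m+2})$, then $\Delta(Q^+)$ is precisely the paper's Appendix B. Where you genuinely depart from the paper is \eqref{eq:Q-3}: the paper disposes of it with ``can be proved similarly; we omit the details'', i.e.\ a second lengthy direct computation, and it invokes the automorphism $\omega$ only at the algebra level, to deduce the $Q^-$ half of \eqref{eq:Q-1} from the $Q^+$ half. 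You instead promote $\omega$ to the coalgebra level: since $\Delta\circ\omega$ and $(\omega\otimes\omega)\circ\Delta^{\mathrm{op}}$ are both algebra homomorphisms, the identity $\Delta\circ\omega=(\omega\otimes\omega)\circ\Delta^{\mathrm{op}}$ follows from the generator check, and your computations $\omega(E_{m-1,m+1})=-q^{-1}E_{m+1,m-1}$, $\omega(E_{m-1,m+2})=-E_{m+2,m-1}$, $\omega(Q^+)=-Q^-$ are all correct (the sign in the last two rests on the special value $q_m^{-1}q_{m+1}^{-1}=-1$). Applying $\omega\otimes\omega$ to the opposite of \eqref{eq:Q-2} then yields \eqref{eq:Q-3} on the nose. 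This is a genuine improvement in economy: the second long coproduct calculation disappears, and the symmetry between $Q^+$ and $Q^-$ becomes structural rather than being re-verified by hand, at the modest cost of checking the anti-coalgebra property of $\omega$ on generators.
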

\begin{proof}
The proof of \eqref{eq:Q-1} requires some technical results which are listed in Appendix \ref{sect:commutaions}.  Note that the two relations in \eqref{eq:Q-1} imply each other by noting the analogue of the automorphism \eqref{eq:auto}. Thus we only need to prove one of them, and we will consider the first.

It is clear by \eqref{eq:gl-4} that $[f_a, Q^+]=0$ for all $a\ne m, m\pm1$.

For $[f_m, Q^+]$, we use \eqref{eq:lem3} to obtain
\begin{eqnarray}
\begin{aligned}
{[f_m, Q^+]}&=[f_m, e_m] E_{m-1, m+2} - E_{m-1, m+2}[f_m, e_m]\\
		&=- \frac{k_m- k_m^{-1}}{q_m-q_m^{-1}} E_{m-1, m+2}
		+ E_{m-1, m+2} \frac{k_m- k_m^{-1}}{q_m-q_m^{-1}} =0,
\end{aligned}
\end{eqnarray}
where in the last step, we used the fact that
\[
k_m E_{m-1, m+2} k_m^{-1} = -q_m^{-1}q_{m+1}^{-1} E_{m-1, m+2}
= E_{m-1, m+2}.
\]
Consider $[f_{m-1}, Q^+]$. Using \eqref{eq:lem4}, we have
\begin{eqnarray}
\begin{aligned}
-[f_{m-1}, Q^+]&=-e_m [f_{m-1}, E_{m-1, m+2}] + [f_{m-1}, E_{m-1, m+2}] e_m\\
		&= -e_m  E_{m, m+2} k_{m-1}^{-1} +  E_{m, m+2} k_{m-1}^{-1} e_m\\
		&= q_{m+1}^{-1} e_m e_{m+1} e_m k_{m-1}^{-1} + e_m k_{m-1}^{-1} e_{m+1}e_m\\
		&= (q_{m+1}^{-1} + q_{m-1})e_m e_{m+1} e_m k_{m-1}^{-1}=0,
\end{aligned}
\end{eqnarray}
where the last step follows from $q_{m+1}^{-1} + q_{m-1} = (-q^{-1})^{-1} + q=0$.

Finally,  we consider $[f_{m+1}, Q^+]$. We have
\begin{eqnarray}
\begin{aligned}
-[f_{m+1}, Q^+]&=e_m  E_{m-1, m+1} k_{m+1}q_{m+1}^{-1}
			-  E_{m-1, m+1} k_{m+1}q_{m+1}^{-1} e_m\\
			&=q_{m+1}^{-1}  (e_m  E_{m-1, m+1} - q_{m+1}^{-1} E_{m-1, m+1}  e_m)k_{m+1}=0.
\end{aligned}
\end{eqnarray}

The proof of \eqref{eq:Q-2} and \eqref{eq:Q-3} is straightforward but lengthy. To avoid interrupting the main line of thoughts, we relegate the details to Appendix \ref{sect:proof-Q}.
\end{proof}

The following result is an immediate corollary of the above lemma.
\begin{theorem}
Let $J$ be the two-sided ideal generated by $Q^+$ and $Q^-$ in $\widetilde{\U}_q(\gl_{m, n})$.  Then $J$ is a Hopf ideal, and
\[
\U_q(\gl_{m, n}) =\widetilde{\U}_q(\gl_{m, n})/J,
\]
which is a Hopf algebra with co-multiplication, co-unit and antipode induced by the corresponding structure maps of  $\widetilde{\U}_q(\gl_{m, n})$
given in Lemma \ref{lem:hopf-mn}.
\end{theorem}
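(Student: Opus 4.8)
The plan is to verify that $J$ is a \emph{Hopf ideal}: a two-sided ideal satisfying the coideal conditions $\Delta(J)\subseteq J\ot\TU_q(\gl_{m,n})+\TU_q(\gl_{m,n})\ot J$ and $\epsilon(J)=0$, together with $S(J)\subseteq J$. Granting these, the structure maps of \lemref{lem:hopf-mn} descend to well-defined maps on the quotient $\TU_q(\gl_{m,n})/J$, which then becomes a Hopf algebra; this quotient is by definition $\U_q(\gl_{m,n})$.

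First I would reduce everything to the generators $Q^{\pm}$. Since $\Delta$ and $\epsilon$ are algebra homomorphisms and a typical element of $J$ is a sum of terms $aQ^{\pm}b$, writing $\Delta(aQ^{+}b)=\Delta(a)\Delta(Q^{+})\Delta(b)$ and substituting \eqnref{eq:Q-2} shows that the summand coming from $Q^{+}\ot k_{m-1}k_m^2k_{m+1}$ lands in $J\ot\TU_q(\gl_{m,n})$ (its left tensor leg lies in the two-sided ideal $J$) while the summand from $1\ot Q^{+}$ lands in $\TU_q(\gl_{m,n})\ot J$; the same holds for $Q^{-}$ via \eqnref{eq:Q-3}. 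Hence the coideal condition follows directly from the skew-primitivity formulas \eqnref{eq:Q-2}--\eqnref{eq:Q-3} of \lemref{lem:Serre-Q}. The counit condition $\epsilon(Q^{\pm})=0$ is immediate, since $Q^{\pm}$ are polynomials in the $e_a,f_a$ with vanishing constant term and $\epsilon(e_a)=\epsilon(f_a)=0$.

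Next I would treat the antipode. The element $g:=k_{m-1}k_m^2k_{m+1}$ is a product of group-like generators $K_b^{\pm1}$, hence group-like and invertible with $S(g)=g^{-1}$. Applying the antipode axiom $\mu(S\ot\id)\Delta(Q^{+})=\epsilon(Q^{+})1=0$ to \eqnref{eq:Q-2} gives $S(Q^{+})g+Q^{+}=0$, so $S(Q^{+})=-Q^{+}g^{-1}\in J$, and \eqnref{eq:Q-3} similarly gives $S(Q^{-})=-g\,Q^{-}\in J$. Since $S$ is an algebra anti-automorphism, $S(aQ^{\pm}b)=S(b)\,S(Q^{\pm})\,S(a)\in J$, and therefore $S(J)\subseteq J$.

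Putting the three verifications together shows that $J$ is a Hopf ideal, whence $\U_q(\gl_{m,n})=\TU_q(\gl_{m,n})/J$ inherits the induced Hopf algebra structure. I expect no genuine obstacle in the theorem itself: all the real work sits in \lemref{lem:Serre-Q}, and especially in the lengthy coproduct computations \eqnref{eq:Q-2}--\eqnref{eq:Q-3} deferred to the appendix. Once those are in hand the statement is a formal consequence, the only additional input being the elementary skew-primitive antipode identity used above.
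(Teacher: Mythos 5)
Your proof is correct and follows essentially the same route as the paper: both deduce the Hopf-ideal property of $J$ directly from the skew-primitivity formulas \eqref{eq:Q-2}--\eqref{eq:Q-3} of Lemma \ref{lem:Serre-Q} together with $\epsilon(Q^\pm)=0$, whence the quotient inherits the structure maps. The only difference is one of detail: the paper records this in two lines, while you spell out the routine coideal verification and the antipode identities $S(Q^+)=-Q^+\,(k_{m-1}k_m^2k_{m+1})^{-1}$ and $S(Q^-)=-k_{m-1}k_m^2k_{m+1}\,Q^-$ that the paper leaves implicit.
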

\begin{proof} By \eqref{eq:Q-1}, $J$ is a proper two-sided ideal in $\widetilde{\U}_q(\gl_{m, n})$. It is a Hopf ideal by \eqref{eq:Q-2} and \eqref{eq:Q-3}, and the fact that $\epsilon(Q^\pm)=0$. Hence $\widetilde{\U}_q(\gl_{m, n})/J$ is a Hopf algebra with the structure maps induced by those of  $\widetilde{\U}_q(\gl_{m, n})$.  It is obvious that  $\U_q(\gl_{m, n})= \widetilde{\U}_q(\gl_{m, n})/J$.
\end{proof}

\subsection{More on the structure of $\U_q(\gl_{m, n})$}\label{sect:structures}
We now return to the Hopf algebra $\U_q(\gl_{m, n})$.
By examining the defining relations,  we can easily see
that each of the subsets of generators below generates
a subalgebra (with identity).
\[
\begin{aligned}
(a). &\qquad
\{e_a\mid a\in I'\}, \quad \{f_a\mid a\in I'\}, \quad \{K_b^{\pm 1}\mid b\in I\},\\
(b). &\qquad
\{e_a, K_b^{\pm 1}\mid a\in I', \ b\in I\}, \quad \{e_a, K_b^{\pm 1}, f_c \mid a\in I', \ b\in I, \ m\ne c\in I'\}
\end{aligned}
\]
We denote the subalgebras generated by the subsets in $(a)$
by $\U^+$, $\U^-$ and $\U^0$ respectively, and those by the subsets in $(b)$ by
$\U_q(\fb)$ and $\U_q(\fp)$ respectively. Note that $\U_q(\fb)$ and $\U_q(\fp)$ are Hopf subalgebras of $\U_q(\gl_{m, n})$.

We have the following easy observation.
\begin{lemma}\label{lem:triangular}
The algebra $\U_q(\gl_{m, n})$ admits the following triangular decomposition
$\U_q(\gl_{m, n})=\U^- \U^0 \U^+$, that is,  every element of $\U_q(\gl_{m, n})$ can be expressed as a linear combination of elements of the form $u_- u_0 u_+$ with
$u_-\in\U^-$, $u_0\in\U^0$ and $u_+\in\U^+$.  Furthermore, $\U_q(\fb)=\U^0\U^+$.
\end{lemma}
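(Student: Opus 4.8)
The plan is to establish the triangular decomposition $\U_q(\gl_{m,n})=\U^-\U^0\U^+$ by showing that the subspace $\U^-\U^0\U^+$ is a left ideal (equivalently, closed under left multiplication by all generators), and since it contains the identity, it must equal all of $\U_q(\gl_{m,n})$. Concretely, I would first argue that it suffices to prove the span of monomials of the form $u_-u_0u_+$ is stable under left multiplication by each of $e_a$, $f_a$, $K_b^{\pm 1}$ for $a\in I'$, $b\in I$. Stability under $f_a$ is immediate since $f_a u_-\in\U^-$, and stability under $K_b^{\pm1}$ follows by moving $K_b^{\pm1}$ past $u_-$ using the commutation relations \eqref{eq:gl-2}, \eqref{eq:gl-3} (which only rescale $e$'s and $f$'s by powers of $q_a$, producing another element of $\U^-\U^0$ times a scalar), then absorbing it into $\U^0$.

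The substantive step is stability under left multiplication by $e_a$. Here I would commute $e_a$ to the right through $u_-\in\U^-$ using the cross relations \eqref{eq:gl-4}: each time $e_a$ passes an $f_b$, we pick up the extra term $\delta_{ab}\frac{k_a-k_a^{-1}}{q_a-q_a^{-1}}$, which lies in $\U^0$. Thus $e_a u_-$ rewrites as a sum of terms of the form $u_-'\,(\text{element of }\U^0)$ plus terms where $e_a$ has been pushed to the far right into $\U^+$; in every resulting summand the $e_a$-factor or the new $\U^0$-factor can be reassembled into the $\U^-\U^0\U^+$ form. The only delicate point is that when $e_a$ reaches the $\U^0\U^+$ part and we are left with a product of several $e$'s, we must know these still lie in $\U^+$; but $\U^+$ is by definition closed under multiplication, so $e_a u_+\in\U^+$ automatically. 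Hence every summand returns to the desired shape, and $\U^-\U^0\U^+$ is a left ideal containing $1$, giving the first claim.

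For the final assertion $\U_q(\fb)=\U^0\U^+$, I would run the same left-ideal argument inside the Hopf subalgebra $\U_q(\fb)$, which is generated by $\{e_a,K_b^{\pm1}\}$ alone. Since no $f_a$ generators are present, the cross relations \eqref{eq:gl-4} never contribute the troublesome $\U^0$ correction terms, and left multiplication by $e_a$ or $K_b^{\pm1}$ on a monomial $u_0u_+$ stays in $\U^0\U^+$ after moving the $K_b$'s past the $e$'s via \eqref{eq:gl-2}; thus $\U^0\U^+$ is a left ideal of $\U_q(\fb)$ containing $1$, and the two coincide.

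I expect the main obstacle to be purely bookkeeping rather than conceptual: verifying that after commuting $e_a$ rightward through an arbitrary word in $\U^-$, \emph{every} intermediate term can be normalized back into the canonical order $u_-u_0u_+$ without generating terms outside the span. This requires care because the higher Serre-type relations \eqref{eq:gl-6}--\eqref{eq:gl-10} and the nilpotency $e_m^2=f_m^2=0$ govern the internal structure of $\U^\pm$, but crucially they do not affect the argument: since we never need a PBW-type linear independence statement here (only that the products \emph{span}), the closure of $\U^+$ and $\U^-$ under their own multiplication is all that is needed, making this a genuinely ``easy observation'' as the lemma claims.
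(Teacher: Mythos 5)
Your proposal is correct and takes essentially the same approach as the paper: the paper's proof is exactly the observation that any word in the generators can be normalized by moving $e_a$'s to the right of $f_a$'s and $K_b^{\pm1}$'s (picking up $\U^0$ corrections via \eqref{eq:gl-4}) and $K_b^{\pm1}$'s to the right of $f_a$'s, and your left-ideal/stability formulation is just a structured induction phrasing of that same sorting argument, including the observation that only spanning (no PBW independence) is needed.
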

\begin{proof}
Given any product of the generators, we can always use the defining relations to move $e_a$'s to the right of $f_a$'s and $K_b^{\pm 1}$'s, and  move $K_b^{\pm 1}$'s to the right of $f_a$'s, thus to express the product as a linear combination of elements of the form described in the lemma. The statements in the lemma easily follow from this observation.
\end{proof}

To further analyse the structure of $\U_q(\gl_{m, n})$, we need some notation.
We adopt the standard notation of the $x$-commutator
\[
[A, B]_x = A B - x B A
\]
for any $A, B\in\U_q(\gl_{m, n})$ and $x\in\C(q)$.
The usual commutator is recovered from $[A, B]=[A, B]_1$.
We will also use the quantum adjoint action defined by
\begin{eqnarray}\label{eq:ad}
\begin{array}{l}
ad: \U_q(\gl_{m, n})\otimes \U_q(\gl_{m, n}) \longrightarrow \U_q(\gl_{m, n}), \\
x\otimes y\mapsto
ad_x(y):=\sum_{(x)} x_{(1)} y S(x_{(2)}).
\end{array}
\end{eqnarray}
Note that if $A\in \U_q(\gl_{m, n})$ satisfies $K_c A K_c^{-1}= q_c^{\lambda_c}A$ for all $c\in I$, where  $\lambda_c$ are some integers,  then
$
ad_{f_c}(A)=f_c A - q_c^{-\lambda_c}q_{c+1}^{\lambda_{c+1}}A f_c = [f_c, A]_{q_c^{-\lambda_c}q_{c+1}^{\lambda_{c+1}}}.
$

Now for any $1\leq i<j\leq m+n,$ we define inductively the elements $E_{ij}$, $E_{ji}$ of $\U_q(\fgl_{m, n})$ by $E_{i,i+1}=e_i$,  \ $E_{i+1,i}=f_i$, and
\begin{eqnarray}
&E_{ij}=E_{i,j-1}E_{j-1,j}-q^{-1}_{j-1}E_{j-1,j}E_{i,j-1}, \quad \text{for $j>i+1$},  \label{def-Eij}\\
&E_{ji}=E_{j,j-1}E_{j-1,i}-q_{j-1}E_{j-1,i}E_{j,j-1}, \quad \text{for $j>i+1$}. \label{def-Eji}
\end{eqnarray}

We will concentrate on the $E_{j i}$ belonging to $\U^-$.
By using the fact that $f_a$ and $f_b$  commute if $|a-b|>1$, one can easily show that for any $k$ satisfying $j>k>i$,
\begin{eqnarray} \label{eq:iteration}
E_{j i}=E_{j k} E_{ki}- q_k E_{ki} E_{jk}=[E_{j k}, E_{ki}]_{q_k}.
\end{eqnarray}
We further observe that
\[
\begin{aligned}
&{[f_i, f_{i-1}]}_{q_i}= f_i f_{i-1}- q_i f_{i-1}  f_i= -q_i ad_{f_{i-1}}(f_i), \\
&E_{j i} = (-1)^{j-i-1}\prod_{k=i+1}^{j-1} q_k \cdot ad_{f_i} ad_{f_{i+1}} \dots ad_{f_{j-2}}(f_{j-1}), \quad j>i,
\end{aligned}
\]
where we used  \eqref{eq:iteration} to obtain the second relation.

\begin{lemma}\label{lem:Ejifk}
Assume that $i<j$. If $\{i,j\}\ne \{k, k+1\}$, then
\begin{eqnarray}\label{eq:Ejifk}
  [E_{ji},E_{k+1,k}]=0.
\end{eqnarray}
\begin{proof}
There are three possibilities with $i<j<k$,  $k+1<i<j$, and  $i<k<k+1<j$ respectively.
Equation \eqref{eq:Ejifk} obviously holds in the first two cases since $f_a$ and $f_b$ commute if $|a-b|>1$.  In the last case, we can express $E_{ji}$ as
\[
E_{j i} = [E_{j, k+2},  [E_{k+2, k-1}, E_{k-1, i}]_{q_{k-1}}]_{q_{k+2}}.
\]
[If $j=k+2$ or $i=k-1$, we only need one $q$-commutator.]
By the two cases already proved,  we immediately see that
\[
[E_{j i}, E_{k+1,k}] = [E_{j, k+2},  [[E_{k+2, k-1}, E_{k+1,k}],  E_{k-1, i}]_{q_{k-1}}]_{q_{k+2}}.
\]
Thus \eqref{eq:Ejifk} holds if \begin{eqnarray}\label{eq:reducedEE}
[E_{k+2,k-1},f_k]=0.
\end{eqnarray}

If $k=m$, this is nothing else but the quartic Serre relation \eqref{eq:gl-10}.

If $k\ne m$, one way to prove \eqref{eq:reducedEE} is by simply expanding $[E_{k+2,k-1},f_k]$ in terms of $f_{k\pm 1}$ and $f_k$, then applying the cubic Serre relation \eqref{eq:gl-7}.   A conceptually clearer way is to use the quantum adjoint action. We have
\[
\begin{aligned}
E_{k+2, k-1} &= q_{k}q_{k+1} ad_{f_{k-1}}ad_{f_k}(f_{k+1}), \\
{[E_{k+2, k-1}, f_k]} &= -q_{k}q_{k+1}ad_{f_k}ad_{f_{k-1}}ad_{f_k}(f_{k+1}) \\
			&= -q_{k}q_{k+1} ad_{f_k f_{k-1} f_k}(f_{k+1}).
\end{aligned}
\]
Using the cubic Serre relation \eqref{eq:gl-7}, we obtain
\[
\begin{aligned}
ad_{f_k f_{k-1} f_k}(f_{k+1})&= \frac{1}{q_{k}+q_{k}^{-1}} ad_{f_k^2 f_{k-1} + f_{k-1} f_k^2}(f_{k+1})\\
&= \frac{1}{q_{k}+q_{k}^{-1}} \left(ad_{f_k^2}ad_{f_{k-1}}+ad_{f_{k-1}} ad_{f_k^2}\right)(f_{k+1}).\\	
\end{aligned}
\]
By \eqref{eq:gl-5} and \eqref{eq:gl-7}, we have
\begin{eqnarray}\label{eq:ad-Serre}
\begin{aligned}
ad_{f_{k-1}}(f_{k+1})=0,  \quad
ad_{f_k^2}(f_{k+1})=0.
\end{aligned}
\end{eqnarray}
Hence $ad_{f_k f_{k-1} f_k}(f_{k+1})=0$, which immediately leads to $[E_{k+2, k-1}, f_k]=0$. This completes the proof.
\end{proof}
\end{lemma}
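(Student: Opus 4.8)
The plan is to begin with the observation that $E_{k+1,k}=f_k$ by the base case of the inductive definition \eqref{def-Eji}, so the assertion \eqref{eq:Ejifk} is really the statement $[E_{ji},f_k]=0$. This is an identity inside the subalgebra $\U^-$: both $E_{ji}$ (a nested $q$-commutator in $f_i,f_{i+1},\dots,f_{j-1}$ by \eqref{def-Eji} and \eqref{eq:iteration}) and $f_k$ involve only the lowering generators, so no $e_a$ or $K_b$ enter and the only relevant relations are \eqref{eq:gl-5}, the cubic Serre relations \eqref{eq:gl-7}, and the quartic relations \eqref{eq:gl-9}--\eqref{eq:gl-10}. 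I would then organise the proof according to the position of $k$ relative to the interval $[i,j-1]$, which produces exactly three configurations: $i<j<k$, $\ k+1<i<j$, and $\ i<k<k+1<j$.

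The first two configurations are immediate. If $i<j<k$, then every index $l$ occurring in $E_{ji}$ satisfies $l\le j-1\le k-2$; if $k+1<i<j$, then every such $l$ satisfies $l\ge i\ge k+2$. In either case $|k-l|>1$ for all factors, so by \eqref{eq:gl-5} the generator $f_k$ commutes with each factor of $E_{ji}$, hence with $E_{ji}$ itself, and \eqref{eq:Ejifk} holds trivially.

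The substance is the interior configuration $i<k<k+1<j$. Here I would apply \eqref{eq:iteration} twice to peel off the ends and write
\[
E_{ji}=[E_{j,k+2},\,[E_{k+2,k-1},E_{k-1,i}]_{q_{k-1}}]_{q_{k+2}},
\]
with the understanding that a single $q$-commutator suffices when $j=k+2$ or $i=k-1$. Both $E_{j,k+2}$ (indices $\ge k+2$) and $E_{k-1,i}$ (indices $\le k-2$) commute with $f_k$ by \eqref{eq:gl-5}, so using the elementary identities $[[A,B]_x,f_k]=[[A,f_k],B]_x$ when $B$ commutes with $f_k$, and $[[A,B]_x,f_k]=[A,[B,f_k]]_x$ when $A$ does, the whole expression collapses to
\[
[E_{ji},f_k]=[E_{j,k+2},\,[[E_{k+2,k-1},f_k],E_{k-1,i}]_{q_{k-1}}]_{q_{k+2}}.
\]
Everything is thereby reduced to the single length-three relation $[E_{k+2,k-1},f_k]=0$.

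This reduced relation is the crux, and I expect it to be the main obstacle, precisely because it is where the degenerate structure intervenes and where the exceptional node must be handled separately. When $k=m$ it is nothing but the quartic Serre relation \eqref{eq:gl-10} built into Definition \ref{def:degen-gl}, so there is nothing to prove. When $k\ne m$ I would pass to the quantum adjoint action, writing $E_{k+2,k-1}=q_kq_{k+1}\,ad_{f_{k-1}}ad_{f_k}(f_{k+1})$, which turns $[E_{k+2,k-1},f_k]$ into a scalar multiple of $ad_{f_kf_{k-1}f_k}(f_{k+1})$; the cubic Serre relation \eqref{eq:gl-7} then lets me replace $f_kf_{k-1}f_k$ by $\frac{1}{q_k+q_k^{-1}}(f_k^2f_{k-1}+f_{k-1}f_k^2)$, and the two resulting terms vanish separately because $ad_{f_{k-1}}(f_{k+1})=0$ by \eqref{eq:gl-5} and $ad_{f_k^2}(f_{k+1})=0$ is again \eqref{eq:gl-7}. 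The delicate point is to isolate the length-three subexpression cleanly so that all genuinely noncommutative interactions are confined to $f_{k-1},f_k,f_{k+1}$, and then to treat $k=m$ on its own footing, since there the cubic relation is unavailable and one must instead invoke $e_m^2=f_m^2=0$ together with \eqref{eq:gl-9}--\eqref{eq:gl-10}.
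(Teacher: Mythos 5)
Your proposal is correct and follows essentially the same route as the paper's own proof: the same three-case division, the same nested $q$-commutator decomposition $E_{ji}=[E_{j,k+2},[E_{k+2,k-1},E_{k-1,i}]_{q_{k-1}}]_{q_{k+2}}$ reducing everything to $[E_{k+2,k-1},f_k]=0$, and the same dichotomy of handling $k=m$ via the quartic relation \eqref{eq:gl-10} and $k\ne m$ via the quantum adjoint action with the cubic Serre relation. Your explicit statement of the $x$-commutator exchange identities is a nice touch of added rigor, but the argument is otherwise the paper's.
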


\begin{lemma}\label{lem:Ekikj}
The elements $E_{j i}$ ($1\le i<j\le m+n$) satisfy the following relations.
\begin{eqnarray}
&E_{ki}^2=0,\quad  i\leq m<k, \label{eq:nil}\\
&{[E_{ji},E_{\ell k}]}=0, \quad i<j<k<\ell \text{ or }  k<i<j<\ell, \label{eq:EjiElk1} \\
&E_{ki}E_{kj}=q_kE_{kj}E_{ki}, \quad i<j<k, \label{eq:EkiEkj}\\
&E_{jk}E_{ik}=q_k^{-1}E_{ik}E_{jk}, \quad k<i<j,\label{eq:EjkEik} \\
&{[E_{ji},E_{\ell k}]}=(q-q^{-1})E_{\ell i}E_{jk}, \quad  i<k<j<\ell. \label{eq:EjiElk2}
\end{eqnarray}
\end{lemma}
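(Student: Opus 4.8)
The plan is to recognise \eqref{eq:nil}--\eqref{eq:EjiElk2} as the standard straightening (Levendorskii--Soibelman type) relations among the PBW-type root vectors $E_{ji}$, and to prove them by induction on the lengths $j-i$ and $\ell-k$ of the two intervals $[i,j]$, $[k,\ell]$ attached to the root vectors being commuted. The engine of every induction is the iteration formula \eqref{eq:iteration}, which splits a root vector as a $q$-commutator $E_{ji}=[E_{jc},E_{ci}]_{q_c}$ at any intermediate index $c$, together with the elementary $q$-commutator (Leibniz) identities such as $[a,bc]_x=[a,b]_x\,c+x\,b\,[a,c]$ and $[ab,c]_x=a\,[b,c]+[a,c]_x\,b$. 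Applying these repeatedly reduces any bracket of two root vectors to a combination of brackets in which at least one factor is a simple generator $f_a$, and those base brackets are exactly what Lemma \ref{lem:Ejifk} and the commutation relations collected in the Appendix control. Since every $E_{ji}$ with $i<j$ lies in $\U^-$, the whole argument takes place there.

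I would prove the five relations in the order \eqref{eq:EjiElk1}, then \eqref{eq:EkiEkj}--\eqref{eq:EjkEik}, then \eqref{eq:nil}, and finally \eqref{eq:EjiElk2}, so that each statement may invoke its predecessors. Relation \eqref{eq:EjiElk1} is immediate: when $i<j<k<\ell$ the vector $E_{ji}$ lies in the subalgebra generated by $\{f_i,\dots,f_{j-1}\}$ and $E_{\ell k}$ in that generated by $\{f_k,\dots,f_{\ell-1}\}$; since $k>j$ every index of the first set differs from every index of the second by at least $2$, so the two generating sets commute elementwise by \eqref{eq:gl-5}, and hence so do the root vectors. The case $k<i<j<\ell$ is identical.

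For the two corner relations \eqref{eq:EkiEkj} and \eqref{eq:EjkEik} I would induct on $k-j$ (respectively $j-k$). In the base case $k=j+1$ the identity $E_{k,j}=f_j$ together with the defining recursion reduces the claim to the interaction of $E_{j,i}$ with the single generator $f_j$, in which only the top generator $f_{j-1}$ of $E_{j,i}$ is active; this interaction is governed by the Serre relation \eqref{eq:gl-7} between $f_{j-1}$ and $f_j$. The inductive step rewrites the $[E_{k,j+1},E_{j+1,i}]$-type expressions through \eqref{eq:iteration} and the Leibniz identities to shift the extra index down to the previous case. The nilpotency \eqref{eq:nil} rests on the degenerate relation $f_m^2=0$ of \eqref{eq:gl-8}. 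Since $i\le m<k$, the interval $[i,k]$ straddles the node $m$, and isolating the unique crossing factor $E_{m+1,m}=f_m$ via \eqref{eq:iteration} lets me reduce $E_{ki}^2$, by induction on $k-i$ and with the help of the just-proved relations \eqref{eq:EkiEkj}--\eqref{eq:EjkEik}, to an expression in which two copies of $f_m$ meet; the collision $f_m^2=0$, assisted by the Serre relation \eqref{eq:gl-7} at the node, then forces $E_{ki}^2=0$.

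The hard part will be the crossing relation \eqref{eq:EjiElk2} with $i<k<j<\ell$, which carries the genuinely new term $(q-q^{-1})E_{\ell i}E_{jk}$. I would expand $E_{\ell k}=[E_{\ell j},E_{jk}]_{q_j}$ by \eqref{eq:iteration}, move $E_{ji}$ across using the Leibniz identities, and then simplify the resulting sub-brackets using the already-available facts: the corner relation $E_{ji}E_{jk}=q_jE_{jk}E_{ji}$ (a case of \eqref{eq:EkiEkj}), and the iteration identities that recombine the vertex-sharing pairs $(E_{\ell j},E_{ji})$ into $E_{\ell i}$ and $(E_{\ell j},E_{jk})$ into $E_{\ell k}$. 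A delicate cancellation should leave precisely the single surviving term $(q-q^{-1})E_{\ell i}E_{jk}$. I expect the real difficulty to be twofold: tracking all the $q$-factors through the degenerate node, where $q_{m+1}=-q^{-1}$ and where the cubic Serre relations \eqref{eq:gl-6}--\eqref{eq:gl-7} are replaced by the nilpotency \eqref{eq:gl-8} and the quartic relations \eqref{eq:gl-9}--\eqref{eq:gl-10}; and verifying that whenever the relevant interval contains $m$ the reductions above still close up using \eqref{eq:gl-9}--\eqref{eq:gl-10}, and that the scalar $q-q^{-1}$ on the right of \eqref{eq:EjiElk2} survives unchanged despite the sign twists introduced by $p=-q^{-1}$.
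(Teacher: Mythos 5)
Your overall plan (prove the relations in the order \eqref{eq:EjiElk1}, \eqref{eq:EkiEkj}--\eqref{eq:EjkEik}, \eqref{eq:nil}, \eqref{eq:EjiElk2}, reducing brackets of root vectors via \eqref{eq:iteration} and Leibniz identities to brackets against simple generators) is essentially the paper's strategy, but your execution of the very first step has a genuine gap. You claim the case $k<i<j<\ell$ of \eqref{eq:EjiElk1} is ``identical'' to the case $i<j<k<\ell$ and follows from elementwise commutation of the generating sets via \eqref{eq:gl-5}. That is false: when $k<i<j<\ell$ the two intervals are \emph{nested}, not disjoint. Indeed $E_{ji}$ is built from $f_i,\dots,f_{j-1}$ and $E_{\ell k}$ from $f_k,\dots,f_{\ell-1}$, and since $k<i$ and $j<\ell$ the first index set is \emph{contained} in the second, so the relevant generators do not commute at all (the generator $f_i$ itself occurs in both expansions). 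The nested case is exactly the hard one, and it is where the new relations of the degenerate quantum group do their work: the paper handles it by expanding $E_{ji}$ into monomials $f_{a_1}\cdots f_{a_{j-i}}$ and applying \eqref{eq:Ejifk} of Lemma \ref{lem:Ejifk} to each factor $[f_{a_r},E_{\ell k}]$, and the proof of that lemma in its interior case $i<k<k+1<j$ is precisely where the quartic Serre relation \eqref{eq:gl-10} enters (when the inner generator is $f_m$) and where the cubic Serre relation \eqref{eq:gl-7} is used through the quantum adjoint action (when it is not). Dismissing this case as immediate from \eqref{eq:gl-5} skips the entire content that distinguishes $\U_q(\gl_{m,n})$ from a trivially split algebra.

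This gap also propagates through the rest of your argument, because the nested case of \eqref{eq:EjiElk1} is invoked repeatedly downstream: the paper's (and your) reduction for the general case of \eqref{eq:EkiEkj} must commute $E_{k-1,j}$ past $E_{ki}$ with $i<j<k-1<k$, a nested configuration; and the final step of \eqref{eq:EjiElk2}, turning $-q_j^{-1}E_{\ell i}E_{jk}+q_jE_{jk}E_{\ell i}$ into $(q_j-q_j^{-1})E_{\ell i}E_{jk}$, requires $[E_{jk},E_{\ell i}]=0$ with $i<k<j<\ell$ --- again nested. So your ``delicate cancellation'' in \eqref{eq:EjiElk2} cannot close without the case your proof of \eqref{eq:EjiElk1} fails to cover. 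The repair is available to you, since Lemma \ref{lem:Ejifk} precedes the statement: replace the disjointness argument by the monomial expansion of $E_{ji}$ and apply \eqref{eq:Ejifk} factor by factor, which treats the disjoint and nested cases uniformly. A smaller divergence: for \eqref{eq:nil} you propose to collide two copies of $f_m$ using \eqref{eq:gl-8}, whereas the paper never uses $f_m^2=0$ there; it splits $E_{ki}$ at $m$, commutes with the corner relations \eqref{eq:EkiEkj}--\eqref{eq:EjkEik}, and obtains $E_{ki}^2=q_iq_k^{-1}E_{ki}^2=-q^2E_{ki}^2$, so nilpotency follows purely from the sign in $q_k=-q^{-1}$; your route may be workable but is only sketched and would need the Serre data at the node, where \eqref{eq:gl-7} with $a=m$ is not available.
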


\begin{remark} \label{rem:order}
Given any monomial in the elements $E_{j i}$ ($i<j$),  Lemma \ref{lem:Ekikj} enables us to express it as a linear combination of ordered monomials for any chosen linear order of the elements.
\end{remark}

\begin{proof}[Proof of Lemma \ref{lem:Ekikj}]
First we consider equation \eqref{eq:EjiElk1}.
From \eqref{def-Eji} we can see that $E_{j i}$ can be expressed as a linear combination of  products of the form $f_{a_1}f_{a_2}\dots f_{a_{j-i}}$ with the $a_r$ distinct elements of $\{i, i+1, \dots, j-1\}$. Hence $[E_{j i}, E_{\ell k}]$ can be expressed as a linear combination of the elements
\begin{eqnarray}\label{eq:bracket-EE}
[ f_{a_1}f_{a_2}\dots f_{a_{j-i}}, E_{\ell k}]= \sum_{r=1}^{j-i} f_{a_1}\dots f_{a_{r-1}} [f_{a_r},  E_{\ell k}] f_{a_{r+1}}\dots f_{a_{j-i}}.
\end{eqnarray}
Note that we have $\{a_r, a_r+1\}\cap\{k, \ell\}=\emptyset$ for all $r$. Hence all terms on the right side of \eqref{eq:bracket-EE} vanish by \eqref{eq:Ejifk}. This proves \eqref{eq:EjiElk1}.

Next we prove \eqref{eq:EkiEkj}.

Consider the special case with $j=k-1$ and $i=k-2$. We have
\[
\begin{aligned}
E_{k, k-2} E_{k,k-1} = E_{k,k-1}  E_{k-1, k-2} E_{k,k-1} - q_{k-1} E_{k-1, k-2} E_{k,k-1}^2, \\
 E_{k,k-1}E_{k, k-2} = E_{k,k-1}^2  E_{k-1, k-2}  - q_{k-1} E_{k,k-1}  E_{k-1, k-2} E_{k,k-1}.
\end{aligned}
\]
If $k\ne m+1$, by using \eqref{eq:gl-7}, we obtain
\[
\begin{aligned}
 E_{k,k-1}E_{k, k-2} &= E_{k,k-1}^2  E_{k-1, k-2}  - q_{k-1} E_{k,k-1}  E_{k-1, k-2} E_{k,k-1}\\
&=  q_{k-1}^{-1} E_{k,k-1}  E_{k-1, k-2} E_{k,k-1} - E_{k-1, k-2} E_{k,k-1}^2 \\
&= q_{k-1}^{-1} (E_{k,k-1}  E_{k-1, k-2} E_{k,k-1} - q_{k-1}E_{k-1, k-2} E_{k,k-1}^2)\\
&= q_{k-1}^{-1}E_{k, k-2} E_{k, k-1}.
\end{aligned}
\]
Note that in this case,  $q_k=q_{k-1}$, and we arrive at \eqref{eq:EkiEkj}.
If $k=m+1$, we have
\[
\begin{aligned}
E_{k, k-2} E_{k,k-1} = E_{k,k-1}  E_{k-1, k-2} E_{k,k-1}, \\
 E_{k,k-1}E_{k, k-2} =- q_{k-1} E_{k,k-1}  E_{k-1, k-2} E_{k,k-1},
\end{aligned}
\]
and $q_{k-1} = -q_k^{-1}$. Hence follows \eqref{eq:EkiEkj}  in this case.
Therefore for any $k$,
\begin{eqnarray}\label{eq:Ekikj4}
E_{k,k-2}E_{k,k-1}=q_kE_{k,k-1}E_{k,k-2}.
\end{eqnarray}

Now we consider the general case of \eqref{eq:EkiEkj} with $k-1>j>i<k-2$.  We have
\begin{eqnarray}\label{eq:Ekikj2}
\begin{aligned}
E_{ki}E_{kj}=E_{ki}(E_{k,k-1}E_{k-1,j}-q_{k-1}E_{k-1,j}E_{k,k-1})\\ \overset{(\ref{eq:EjiElk1})}{=}E_{ki}E_{k,k-1}E_{k-1,j}-q_{k-1}
E_{k-1,j}E_{ki}E_{k,k-1}.
\end{aligned}
\end{eqnarray}
As $i<k-2$, we have
\begin{eqnarray}\label{eq:Ekikj3}
\begin{aligned}
E_{ki}E_{k,k-1}=&(E_{k,k-2}E_{k-2,i}-q_{k-2}E_{k-2,i}E_{k,k-2})E_{k,k-1}\\ \overset{(\ref{eq:EjiElk1})}{=}&E_{k,k-2}E_{k,k-1}E_{k-2,i}-q_{k-2}
E_{k-2,i}E_{k,k-2}E_{k,k-1}.
\end{aligned}
\end{eqnarray}
Using \eqref{eq:Ekikj4} to the right hand side of \eqref{eq:Ekikj3}, we obtain
\begin{eqnarray}\label{eq:Ekikj5}
\begin{aligned}
E_{ki}E_{k,k-1}=&q_k(E_{k,k-1}E_{k,k-2}E_{k-2,i}-q_{k-2}E_{k-2,i}E_{k,k-1}E_{k,k-2})\\
=&q_kE_{k,k-1}(E_{k,k-2}E_{k-2,i}-q_{k-2}E_{k-2,i}E_{k,k-2})\\
=&q_kE_{k,k-1}E_{ki}.
\end{aligned}
\end{eqnarray}
Using \eqref{eq:Ekikj5} in \eqref{eq:Ekikj2}, we obtain
\begin{align*}
 E_{ki}E_{kj}=&q_k(E_{k,k-1}E_{ki}E_{k-1,j}-q_{k-1}E_{k-1,j}E_{k,k-1}E_{ki})\\
=&q_k(E_{k,k-1}E_{k-1,j}-q_{k-1}E_{k-1,j}E_{k,k-1})E_{ki}\\
=&q_kE_{kj}E_{ki}.
\end{align*}
This completes the proof of \eqref{eq:EkiEkj}.
Equation \eqref{eq:EjkEik} can be proved similarly.

We then consider the equation \eqref{eq:EjiElk2}. We have
\begin{align*}
 [E_{ji},E_{lk}]=&E_{ji}(E_{lj}E_{jk}-q_{j}E_{jk}E_{lj})-(E_{lj}E_{jk}-q_{j}E_{jk}E_{lj})E_{ji}\\
\overset{(\ref{eq:EkiEkj})}{=}&(E_{ji}E_{lj}E_{jk}-q_{j}^{-1}E_{lj}E_{ji}E_{jk})-q_j(E_{ji}E_{jk}E_{lj}-
E_{jk}E_{lj}E_{ji})\\
\overset{(\ref{eq:EkiEkj})}{=}&-q^{-1}_jE_{li}E_{jk}+q_jE_{jk}E_{li}\\
\overset{(\ref{eq:EjiElk1})}{=}&(q_j-q^{-1}_j)E_{li}E_{jk}.
\end{align*}
This proves \eqref{eq:EjiElk2}.

Finally, we prove \eqref{eq:nil}. We have, for $k>m\ge i$,
\[\begin{aligned}
E_{ki}^2=&(E_{km}E_{mi}-q_mE_{mi}E_{km})E_{ki}\\
=&q_iq_k^{-1}E_{ki}E_{km}E_{mi}-q_mq_k^{-1}q_iE_{ki}E_{mi}E_{km}\\
=&q_iq_k^{-1}E_{ki}^2
=-q^2E_{ki}^2.
\end{aligned}\]
This immediately leads to $E_{ki}^2=0$.
\end{proof}

Denote by $\U_q(\fu_-)$ the subalgebra of $\U_q(\gl_{m, n})$ generated by the elements  $E_{m+j, i}$ with $i=1, 2,\dots, m$ and $j=1, 2, \dots n$.
For $\theta_{i,n}\cdots\theta_{i,1}\in\{0,1\}$, we let
\[
\begin{aligned}
\Gamma_i^{(\theta_i)}&=(E_{m+n,i})^{\theta_{i, n}}(E_{m+n-1,i})^{\theta_{i, n-1}}\cdots(E_{m+1,i})^{\theta_{i 1}},\\
\Gamma^{(\theta)}&=\Gamma_1^{({\theta_1})}\Gamma_2^{({\theta_2})}\cdots\Gamma_m^{({\theta_m})}.
\end{aligned}
\]
Then it follows from Lemma \ref{lem:Ejifk} and Remark \ref{rem:order} that the $\Gamma^{(\theta)}$ with $\theta\in\{0,1\}^{\times mn}$ span $\U_q(\fu_-)$.

Consider the subalgebra $\U_q(\fg_0):=\U_q(\gl_m)\otimes\U_p(\gl_n)$ in $\U_q(\gl_{m, n})$. Let us write
$\U^-_{m, n}= \U^- \cap \U_q(\fg_0)$.  Then Lemma \ref{lem:Ekikj} enables us to express $\U^-$ as $\U^-= \U_q(\fu_-)\U^-_{m, n}$.  By Lemma \ref{lem:triangular},  $\U_q(\gl_{m, n})= \U_q(\fu_-)\U^-_{m, n} \U^0\U^+ = \U_q(\fu_-)\U_q(\fp)$.

To summarise, we have the following result.
\begin{proposition}\label{prop:para} The degenerate quantum general linear group has the  parabolic decomposition
$
\U_q(\gl_{m,n})=\U_q(\fu_-)\U_q(\fp),
$
where $\U_q(\fu_-)$ is spanned by the elements $\Gamma^{(\theta)}$ with $\theta\in\{0,1\}^{\times mn}$.
\end{proposition}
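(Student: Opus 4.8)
The plan is to establish the decomposition $\U_q(\gl_{m,n})=\U_q(\fu_-)\U_q(\fp)$ and the spanning statement by combining the structural results already proved in this section, which isolate the two facts we need: a commutation calculus for the root vectors $E_{ji}$ (Lemma \ref{lem:Ekikj}) and the nilpotency relation $E_{ki}^2=0$ for $i\le m<k$ (equation \eqref{eq:nil}). First I would recall the triangular decomposition $\U_q(\gl_{m,n})=\U^-\U^0\U^+$ from Lemma \ref{lem:triangular}, so that the whole problem reduces to understanding the negative part $\U^-$. The goal is to show $\U^-=\U_q(\fu_-)\,\U^-_{m,n}$, where $\U^-_{m,n}=\U^-\cap\U_q(\fg_0)$ is the negative part of the Levi subalgebra $\U_q(\fg_0)=\U_q(\gl_m)\otimes\U_p(\gl_n)$; combining this with Lemma \ref{lem:triangular} and the fact that $\U_q(\fp)=\U^-_{m,n}\U^0\U^+$ (by the second dot point in $(b)$ of Section \ref{sect:structures}) yields $\U_q(\gl_{m,n})=\U_q(\fu_-)\,\U^-_{m,n}\U^0\U^+=\U_q(\fu_-)\U_q(\fp)$.

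The heart of the argument is the factorisation $\U^-=\U_q(\fu_-)\,\U^-_{m,n}$. I would argue that $\U^-$ is spanned by monomials in the root vectors $E_{ji}$ ($1\le i<j\le m+n$): since each $f_a=E_{a+1,a}$ and, by \eqref{eq:iteration} together with \eqref{def-Eji}, each $E_{ji}$ is an iterated $q$-commutator of the $f_a$, one can invert these relations to express any product of $f_a$'s as a linear combination of monomials in the $E_{ji}$. The relations of Lemma \ref{lem:Ekikj} then function as straightening rules: \eqref{eq:EjiElk1}, \eqref{eq:EkiEkj}, \eqref{eq:EjkEik} and \eqref{eq:EjiElk2} allow any monomial to be rewritten, by Remark \ref{rem:order}, as a linear combination of ordered monomials for a chosen linear order on the $E_{ji}$. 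I would choose an order placing all ``mixed'' generators $E_{m+s,r}$ (with $1\le r\le m$, $1\le s\le n$, the generators of $\U_q(\fu_-)$) to the left of all ``Levi'' generators $E_{ji}$ lying inside $\U_q(\fg_0)$ (i.e. those with $i<j\le m$ or $m<i<j$). Crucially, when applying the straightening relations to move a mixed generator past a Levi one, the correction terms produced — governed by \eqref{eq:EjiElk2} — remain products of mixed generators with Levi generators, so the bookkeeping stays within the desired form $\U_q(\fu_-)\U^-_{m,n}$ rather than escaping it.

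Having reduced $\U^-$ to a product of a monomial in the mixed generators followed by an element of $\U^-_{m,n}$, it remains to control the $\U_q(\fu_-)$ factor. Here I would invoke the nilpotency \eqref{eq:nil}, which forces each mixed generator to appear at most once, and the ordering relations \eqref{eq:EkiEkj}, \eqref{eq:EjkEik}, \eqref{eq:EjiElk1} restricted to mixed indices, which allow the mixed generators to be arranged into the specific ordered product $\Gamma^{(\theta)}$ defined before the proposition (first by the lower index $i$, then by the upper index $m+j$). Since each exponent $\theta_{i,j}\in\{0,1\}$ by nilpotency, the monomials $\Gamma^{(\theta)}$ with $\theta\in\{0,1\}^{\times mn}$ span $\U_q(\fu_-)$, exactly as asserted. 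I expect the main obstacle to be verifying carefully that the correction terms from \eqref{eq:EjiElk2} never produce generators outside $\U_q(\fu_-)\cup\U_q(\fg_0)$ and that the straightening process terminates; this is a standard but delicate induction on a suitable length/degree filtration, and the structural relations of Lemma \ref{lem:Ekikj} are precisely tailored to make it go through. The proposition as stated asks only for spanning, so I would not need the linear independence (PBW) half of the statement, which would require an additional representation-theoretic or diamond-lemma argument.
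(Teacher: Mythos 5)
Your proposal is correct and follows essentially the same route as the paper: reduce via the triangular decomposition of Lemma \ref{lem:triangular} to the factorisation $\U^-=\U_q(\fu_-)\U^-_{m,n}$, obtain that factorisation by straightening monomials in the root vectors $E_{ji}$ using Lemma \ref{lem:Ekikj} (Remark \ref{rem:order}) with the mixed generators ordered to the left, and use the nilpotency \eqref{eq:nil} to bound exponents so that the $\Gamma^{(\theta)}$ span $\U_q(\fu_-)$. The paper's own proof is exactly this argument stated tersely; your write-up merely makes explicit the bookkeeping (correction terms from \eqref{eq:EjiElk2} staying in the form mixed times Levi) that the paper leaves implicit.
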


\section{Finite dimensional representations of $\U_q(\gl_{m, n})$}
\subsection{Finite dimensional irreducible representations}\label{sect:rep-theory}

The triangular decomposition (Lemma \ref{lem:triangular}) and parabolic decomposition (see  Proposition \ref{prop:para}) of $\U_q(\gl_{m, n})$ enable us to adapt the usual construction of highest weight modules in Lie theory to the present context. We develop the construction here, obtaining a systematic method for studying the representation theory of the degenerate quantum general linear group. 

Consider representations of the subalgebra $\U_q(\fg_0)$ of $\U_q(\gl_{m, n})$.
A highest weight $\U_q(\fg_0)$-module is one generated by a highest weight vector
$v$ such that
\[
\begin{aligned}
e_i v=0, \quad  K_j v = \lambda_j v, \quad 1\le i\le  m-1, \ 1\le j\le  m, \\
e_{m+\mu} v=0, \quad  K_{m+\nu} v= \lambda_{m+\nu} v, \quad 1\le \mu\le n-1, \ 1\le\nu\le n,
\end{aligned}
\]
where  $0\ne\lambda_a\in\C(q)$ for all $a$.  Write $\Lambda=(\lambda_1, \lambda_2, \dots, \lambda_{m+n})$ and call it the highest weight of the module.  It is known that every finite dimensional simple weight module for $\U_q(\fg_0)$ is a highest weight module; a simple highest weight module is finite dimensional if and only if
\begin{eqnarray}\label{eq:fd}
\frac{\lambda_a}{\lambda_{a+1}}= \omega_a q_a^{\ell_a}, \quad \text{where $\ell_a\in\Z_+$, $\omega_a=\pm 1$, for all $a\ne m$}.
\end{eqnarray}

Given any simple $\U_q(\fg_0)$-module $L^0(\Lambda)$ with  highest weight $\Lambda$,
we boost it to a $\U_q(\fp)$-module by requiring $e_mL^0(\Lambda)=\{0\}$.  We can then construct the generalised Verma module
$V(\Lambda) := \U_q(\gl_{m, n})\otimes_{\U_q(\fp)}L^0(\Lambda)$, which as a vector space is given by
\[
V(\Lambda) = \U_q(\fu_-)\otimes L^0(\Lambda).
\]

The generalised Verma module contains a  unique maximal submodule $M(\Lambda)$, which is  the sum of all the submodules having zero intersection with  $1\otimes L^0(\Lambda)$. Thus $V(\Lambda)$ has a unique simple quotient $L(\Lambda):=V(\Lambda)/M(\Lambda)$.

We have the following result.
\begin{theorem} \label{thm:fd-irreps} The simple  $\U_q(\gl_{m, n})$-module  $L(\Lambda)$  is finite dimensional if and only if its highest weight $\Lambda$ satisfies the condition \eqref{eq:fd}.
\end{theorem}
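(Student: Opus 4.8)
The plan is to treat the two implications separately, exploiting the parabolic decomposition $\U_q(\gl_{m,n})=\U_q(\fu_-)\U_q(\fp)$ of \propref{prop:para} together with a classical-type $\fsl_2$ analysis at each non-degenerate node. For sufficiency, suppose $\Lambda$ satisfies \eqref{eq:fd}. The conditions on $\lambda_a/\lambda_{a+1}$ for $a<m$ and for $m<a<m+n$ are exactly the dominance conditions for the two tensor factors $\U_q(\gl_m)$ and $\U_p(\gl_n)$ of $\U_q(\fg_0)$, so the quoted classification of finite dimensional simple $\U_q(\fg_0)$-modules shows that $L^0(\Lambda)$ is finite dimensional. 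I would then recall from \propref{prop:para} that $V(\Lambda)=\U_q(\fu_-)\otimes L^0(\Lambda)$ as a vector space and that $\U_q(\fu_-)$ is spanned by the finitely many monomials $\Gamma^{(\theta)}$ with $\theta\in\{0,1\}^{\times mn}$; this finiteness is precisely the consequence of the nilpotency relations $E_{ki}^2=0$ in \eqref{eq:nil}. Hence $\dim V(\Lambda)\le 2^{mn}\dim L^0(\Lambda)<\infty$, and its simple quotient $L(\Lambda)$ is finite dimensional. I expect this direction to be essentially immediate once the finiteness of $\U_q(\fu_-)$ is invoked.

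For necessity, I would assume $L(\Lambda)$ finite dimensional and fix a node $a\ne m$. By \eqref{eq:gl-2}--\eqref{eq:gl-4} the elements $e_a,f_a,k_a^{\pm 1}$ generate an ordinary $\U_{q_a}(\fsl_2)$, since these relations give $k_ae_ak_a^{-1}=q_a^2 e_a$ together with the standard commutator $[e_a,f_a]=(k_a-k_a^{-1})/(q_a-q_a^{-1})$. The highest weight vector satisfies $e_a v=0$ and $k_a v=(\lambda_a/\lambda_{a+1})v$, so it generates a finite dimensional highest weight $\U_{q_a}(\fsl_2)$-module inside $L(\Lambda)$. By the representation theory of quantum $\fsl_2$ at generic parameter, such a module forces the $k_a$-eigenvalue to be of the form $\omega_a q_a^{\ell_a}$ with $\omega_a=\pm 1$ and $\ell_a\in\Z_+$; that is, $\lambda_a/\lambda_{a+1}=\omega_a q_a^{\ell_a}$, which is \eqref{eq:fd}. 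The node $a=m$ is genuinely excluded: there $e_m^2=f_m^2=0$ by \eqref{eq:gl-8}, so $e_m$ already acts nilpotently and imposes no growth constraint, leaving $\lambda_m,\lambda_{m+1}$ as the two free nonzero scalars anticipated in the abstract.

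The only delicate point I foresee is bookkeeping rather than conceptual: one must apply the ``known'' classification for $\U_q(\fg_0)$ with the correct parameters ($q$ on the first $m$ indices and $p=-q^{-1}$ on the last $n$), and carry the sign ambiguities $\omega_a$ consistently through both directions. I do not expect a serious obstacle, because the genuinely structural input — the finiteness and PBW-type spanning of $\U_q(\fu_-)$ via the nilpotency in \lemref{lem:Ekikj} — has already been established, so the theorem reduces to assembling these facts with the standard rank-one quantum $\fsl_2$ theory.
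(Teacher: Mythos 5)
Your proposal is correct and follows essentially the same route as the paper: sufficiency from the finite dimensionality of $\U_q(\fu_-)$ given by \propref{prop:para} (which rests on the nilpotency relations \eqref{eq:nil}), and necessity from the rank-one quantum $\fsl_2$ analysis at each node $a\ne m$. The paper compresses both directions into two sentences, so your write-up simply supplies the details the authors declare ``clear''.
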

\begin{proof}
It is clear that \eqref{eq:fd} is a necessary condition for $L(\Lambda)$ to be finite dimensional. It is also sufficient since $\U_q(\fu_-)$ is finite dimensional.
\end{proof}

\begin{remark} \label{rem:sim-super}
The parametrisation of the finite dimensional simple  $\U_q(\gl_{m, n})$-modules is essentially the same as that for the quantum general linear supergroup $\U_q(\gl_{m|n})$, see \cite{Z93, Z98}.
\end{remark}

\subsection{Tensor representations}\label{sect:tensors}
The quantum general linear group $\U_q(\gl_{m, n})$ admits a class of finite dimensional representations analogous to the tensor representations of the Drinfeld-Jimbo quantum general linear group. We study these representations here. 

%
%
%
Let $V=\C(q)^{m+n}$, and fix the standard basis
\[
v_1=\begin{pmatrix}1\\ 0\\ 0\\ \vdots \\ 0\\ 0\end{pmatrix}, \quad
v_2=\begin{pmatrix}0\\ 1\\ 0\\ \vdots \\ 0\\ 0\end{pmatrix}, \quad
\dots, \quad
v_{m+n}=\begin{pmatrix}0\\ 0\\ 0\\ \vdots \\ 0\\ 1\end{pmatrix}.
\]
Let $e_{a b}$ ($a,b\in I$) be the matrix units of size $(m+n)\times(m+n)$ defined relative to this basis. Then $e_{a b}v_c = \delta_{b c} v_a$  for all $a, b, c\in I$.
Denote by $V^*$ the dual space of $V$, and let
\[
\begin{aligned}
&\bar{v}_1=\begin{pmatrix}1& 0& 0& \dots & 0& 0\end{pmatrix}, \\
&\bar{v}_2=\begin{pmatrix}0& 1& 0& \dots & 0& 0\end{pmatrix}, \\
&\qquad \dots\dots\dots\dots\\
&\bar{v}_{m+n}=\begin{pmatrix}0& 0& 0& \dots & 0& 1\end{pmatrix},
\end{aligned}
\]
which form a basis of $V^*$ dual to the standard basis of $V$ in the sense that
$\bar{v}_a(v_b)=\delta_{a b}$ for all $a, b\in I$.
We can endow $V$ with a $\U_q(\gl_{m, n})$-module structure as follows.
\begin{lemma}
There is a  $\U_q(\gl_{m, n})$-action on $V$ defined,  for all  $a\in I'$,  $b, c\in I$,  by
\begin{eqnarray}\label{eq:act}
e_a v_c = \delta_{a+1, c} v_a, \quad f_a v_c = \delta_{a c} v_{a+1}, \quad
K_b^{\pm 1} v_c = q_b^{\pm \delta_{b c}} v_c.
\end{eqnarray}
The corresponding  representation $\nu: \U_q(\gl_{m, n})\longrightarrow \End_{\C(q)}(V)$ is given by
\begin{eqnarray}\label{eq:rep}
\nu(e_a)=e_{a, a+1}, \quad \nu(f_a)=e_{a+1, a}, \quad  \nu(K_b)=1 +(q_b-1)e_{b b}.
\end{eqnarray}
\end{lemma}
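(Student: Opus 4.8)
The plan is to verify that $\nu$ defines an algebra homomorphism by checking that the matrices in \eqref{eq:rep} satisfy every defining relation \eqref{eq:gl-1}--\eqref{eq:gl-10} of $\U_q(\gl_{m, n})$; since these relations present the algebra on generators, this suffices. First I would note that \eqref{eq:act} and \eqref{eq:rep} describe the same operators, using $e_{a b}v_c=\delta_{b c}v_a$. The single computational tool throughout is the matrix-unit rule $e_{a b}e_{c d}=\delta_{b c}e_{a d}$, which reduces every relation to bookkeeping with Kronecker deltas.

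The relations involving only the $K_b$ are immediate, since each $\nu(K_b)$ is the diagonal matrix with entry $q_b$ in slot $b$ and $1$ elsewhere; these commute and satisfy $\nu(K_b)\nu(K_b^{-1})=1$. For \eqref{eq:gl-2} and \eqref{eq:gl-3} I would conjugate the matrix unit $\nu(e_b)=e_{b, b+1}$ (resp. $e_{b+1, b}$) by the diagonal $\nu(K_a)$, obtaining the scalar $q_a^{\delta_{a b}-\delta_{a, b+1}}$ (resp. its inverse) directly from the ratio of diagonal entries. Relation \eqref{eq:gl-5} holds because $e_{a, a+1}e_{b, b+1}=\delta_{a+1, b}e_{a, b+1}$ vanishes whenever $|a-b|>1$, so the operators even multiply to zero.

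A useful simplification is that $\nu(e_a)^2=e_{a, a+1}^2=0$ and $\nu(f_a)^2=0$ for every $a$, which at once yields \eqref{eq:gl-8} and collapses the cubic Serre relations \eqref{eq:gl-6}, \eqref{eq:gl-7}: only the middle term survives, and a two-step matrix-unit computation shows $\nu(e_a e_{a\pm1}e_a)=0$. For the quartic relations \eqref{eq:gl-9}, \eqref{eq:gl-10} I would first evaluate the nested $q$-commutators, finding that the correction terms drop out so that $\nu(E_{m-1, m+1})=e_{m-1, m+1}$ and $\nu(E_{m-1, m+2})=e_{m-1, m+2}$; then $[\nu(e_m),\nu(E_{m-1, m+2})]=[e_{m, m+1},e_{m-1, m+2}]=0$ again by the multiplication rule, and dually for \eqref{eq:gl-10}.

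The one place demanding care is the Cartan relation \eqref{eq:gl-4}. The left side gives $[\nu(e_a),\nu(f_b)]=\delta_{a b}(e_{a a}-e_{a+1, a+1})$, while the right side requires expressing $\nu(k_a)=\nu(K_a)\nu(K_{a+1})^{-1}$ as the diagonal matrix with entries $q_a$ and $q_{a+1}^{-1}$ in slots $a$ and $a+1$, and then reading off $\frac{\nu(k_a)-\nu(k_a)^{-1}}{q_a-q_a^{-1}}$ entrywise. Matching the slot-$(a+1)$ entry to $-1$ amounts to the identity $q_{a+1}-q_{a+1}^{-1}=q_a-q_a^{-1}$. This is the main (if modest) obstacle: it is transparent when $a\ne m$, since then $q_a=q_{a+1}$, but at $a=m$ one has $q_m=q$ and $q_{m+1}=p=-q^{-1}$, and one must verify the identity using $p-p^{-1}=-q^{-1}+q=q-q^{-1}$ --- the degeneration point where the sign $p=-q^{-1}$ conspires to keep the relation intact. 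Once this is checked, all relations hold and $\nu$ is a representation.
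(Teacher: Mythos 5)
Your proposal is correct and follows essentially the same route as the paper's proof: verify the defining relations \eqref{eq:gl-1}--\eqref{eq:gl-10} on the natural module, with the only nontrivial point being the Cartan relation \eqref{eq:gl-4} at $a=m$, which both you and the paper resolve via the identity $p-p^{-1}=q-q^{-1}$. If anything, you are slightly more explicit than the paper on the Serre and quartic relations (the paper dismisses \eqref{eq:gl-5}--\eqref{eq:gl-10} as clear from $\nu(e_a)^2=\nu(f_a)^2=0$, whereas you also check that the middle terms such as $\nu(e_a e_{a\pm1} e_a)$ and the commutator $[\nu(e_m),\nu(E_{m-1,m+2})]$ vanish), but this is a refinement of the same argument, not a different one.
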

\begin{proof} The second part of the lemma is a simple consequence of the first,  thus we only need to prove that \eqref{eq:act} defines a $\U_q(\gl_{m, n})$-module.

It is clear that \eqref{eq:act}  respects the relation \eqref{eq:gl-1},  and also the relations \eqref{eq:gl-5}--\eqref{eq:gl-10} since $e_a^2$ and $f_a^2$ for all $a\in I'$ act on $V$ by zero. Thus we only need to check the relations  \eqref{eq:gl-2}, \eqref{eq:gl-3} and \eqref{eq:gl-4}.

Let us consider  \eqref{eq:gl-4} first. The case $a\ne b$ is clear.  In the case $a=b$, we note that
\[
\begin{aligned}
\frac{k_a - k_a^{-1}}{q_a - q_a^{-1}} v_c &= \frac{ q_a^{\delta_{a c}}q_{a+1}^{-\delta_{a+1, c}} -
q_a^{-\delta_{a c}}q_{a+1}^{\delta_{a+1, c}}}{q_a - q_a^{-1}} v_c.
\end{aligned}
\]
Using the first one of the following relations
\begin{eqnarray}\label{eq:exp}
q_c^{\pm \delta_{a b}}= 1 + (q_c^{\pm 1} - 1)\delta_{a b}, \quad q_a^{\pm \delta_{a b}} = q_b^{\pm \delta_{a b}},  \quad \forall a, b, c\in I,
\end{eqnarray}
and the fact that $p-p^{-1}=q-q^{-1}$, we can rewrite the scalar factor in front of $v_c$ on the right hand side as
\[
(\delta_{a c}-\delta_{a+1, c})\frac{q_c - q_c^{-1}}{q_a - q_a^{-1}} = \delta_{a c}-\delta_{a+1, c}.
\]
We can easily work out the action of $e_a f_a - f_a e_a$ on  $v_c$ for any $c$, and we obtain
\[
\begin{aligned}
(e_a f_a - f_a e_a)v_c &= (\delta_{a c}  - \delta_{a+1, c}) v_c=
\frac{k_a - k_a^{-1}}{q_a - q_a^{-1}} v_c.
\end{aligned}
\]

For \eqref{eq:gl-2} and \eqref{eq:gl-3}, we have
\[
\begin{aligned}
K_a e_b K_a^{-1} v_c
&= \delta_{b+1, c} q_a^{-\delta_{a c}+\delta_{a b}}v_b
= q_a^{\delta_{a b}-\delta_{a, b+1}} e_b v_c, \\
K_a f_b K_a^{-1} v_c
&= \delta_{b c} q_a^{-\delta_{a c}+\delta_{a, b+1}} v_{b+1}
= q_a^{-\delta_{a b} +\delta_{a, b+1}} f_b v_c.
\end{aligned}
\]

This completes the proof.
\end{proof}

Since $\U_q(\gl_{m, n})$ is a Hopf algebra, the dual space $M^*$ of any finite dimensional $\U_q(\gl_{m, n})$-module $M$ is naturally a $\U_q(\gl_{m, n})$-module.  For any  $x\in\U_q(\gl_{m, n})$ and $\bar{v}\in M^*$,  we define $x\bar{v}$ by
\[
(x\bar{v})(w) = \bar{v}(S(x)w), \quad \forall w\in M.
\]
In particular, the dual space $V^*$ of $V$ is a $\U_q(\gl_{m, n})$-module with
\begin{eqnarray}
K_b \bar{v}_c = q_b^{-\delta_{b c}} \bar{v}_c, \quad
e_a \bar{v}_c = - \delta_{a c} q_{a+1} \bar{v}_{a+1},  \quad
f_a \bar{v}_c = - \delta_{a+1, c} q_{a+1}^{-1} v_a
\end{eqnarray}
for all $a\in I'$ and $b, c\in I$.

\begin{remark}
The  highest weight vector of $V$ is $v_1$ with weight $(q, 1, \dots, 1)$, and the highest weight vector of $V^*$ is $\bar{v}_{m+n}$ with weight $(1, \dots, 1, p^{-1})$, where we recall that $p=-q^{-1}$.
\end{remark}

%
%
%
%
By using the Hopf algebraic structure of $\U_q(\gl_{m, n})$, we can turn the tensor product of any $\U_q(\gl_{m, n})$-modules into a $\U_q(\gl_{m, n})$-module.
In particular,  we have the $\U_q(\gl_{m, n})$-modules $V^{\otimes r}\otimes (V^*)^{\otimes s}$ for $r, s=1, 2, \dots$.  We will call them tensor modules.
Note that $\U_q(\gl_{m, n})$ acts on these modules through the iterated co-multiplication
\begin{eqnarray}\label{eq:Delta-r}
\Delta^{(r+s-1)}= (\Delta\otimes\id^{\otimes (r+s-2)}) (\Delta\otimes\id^{\otimes (r+s-3)})\dots (\Delta\otimes\id)\Delta.
\end{eqnarray}

\begin{example}\label{ex:VV} The tensor square $V\otimes V$ of the natural $\U_q(\gl_{m, n})$-module $V$ decomposes into the direct sum of two simple modules $L_s=L(q^2, 1, \dots, 1)$ and $L_a=L(q, q, 1, \dots, 1)$, which are respectively generated by the highest weight vectors
$v_1\otimes v_1$ and $v_1\otimes v_2 - q^{-1} v_2\otimes v_1$.

\noindent
A basis for $L_s$:
\[
\begin{aligned}
&\{v_i\otimes v_i, \ v_j\otimes v_k + q^{-1} v_k\otimes v_j\mid i, j, k=1, \dots, m, \ j<k \} \\
&\cup\{ v_i\otimes v_{m+\mu} -p v_{m+\mu}\otimes v_i\mid 1\le i\le m, \ 1\le \mu\le n\}\\
&\cup\{v_{m+\mu}\otimes v_{m+\nu} - p  v_{m+\nu}\otimes v_{m+\mu} \ \mid 1\le \mu<\nu\le n\};\end{aligned}
\]
A basis for $L_a$:
\[
\begin{aligned}
&\{\ v_i\otimes v_j -  q^{-1} v_j\otimes v_i\mid 1\le i<j\le m\} \\
&\cup\{ v_i\otimes v_{m+\mu} + p v_{m+\mu}\otimes v_i\mid 1\le i\le m, \ 1\le \mu\le n\}\\
&\cup\{v_{m+\mu}\otimes v_{m+\mu}, \ v_{m+\mu}\otimes v_{m+\nu} + p  v_{m+\nu}\otimes v_{m+\mu} \ \mid \mu, \nu =1, \dots, n, \ \mu<\nu\}.
\end{aligned}
\]
\end{example}

%
%
%
%
\begin{remark}
The degenerate quantum general linear group  $\U_q(\gl_{m, n})$ is also a Hopf algebra with
the opposite co-multiplication given by
\begin{eqnarray}\label{eq:op-comult}
\begin{aligned}
&\Delta': \U_q(\gl_{m, n}) \longrightarrow \U_q(\gl_{m, n})\otimes \U_q(\gl_{m, n}), \\
&\Delta'(e_a)=e_a\otimes 1+ k_a \otimes e_a, \\
&\Delta'(f_a)=f_a\otimes k_a^{-1}+ 1\otimes f_a, \\
&\Delta'(K_b)=K_b\otimes K_b.
\end{aligned}
\end{eqnarray}

Given any two $\U_q(\gl_{m, n})$-modules, we may then endow their tensor product with a module structure by using the opposite co-multiplication $\Delta'$. An immediate question is whether the module structures with respect to the co-multiplication and the  opposite co-multiplication are isomorphic.
\end{remark}
\subsection{A solution of the Yang-Baxter equation}\label{sect:R-matrix}
We answer the above question in the affirmative for the modules $V^{\otimes r}$ for all $r$. This requires the construction of an $R$-matrix.

Introduce the element $R\in \End_{\C(q)}(V\otimes V)$ such that
\begin{eqnarray} \label{eq:R}
R&=R_0 \Theta,
\end{eqnarray}
where $R_0$ and $\Theta$ are respectively defined by
\[
\begin{aligned}
R_0:= 1\otimes 1 + \sum_{a\in I} (q_a-1)e_{a a}\otimes e_{a a}, \  \
\Theta:=1\otimes 1 +  (q-q^{-1})\sum_{a<b}e_{a b}\otimes e_{b a}.
\end{aligned}
\]
We can easily see that
\[
R(v_a\otimes v_b) = \left\{
\begin{array}{l l}
v_a\otimes v_b, \quad \text{if $a<b$}, \\
q_a v_a\otimes v_a, \quad \text{if $a=b$}, \\
v_a\otimes v_b+ (q-q^{-1}) v_b\otimes v_a , \quad \text{if $a>b$}.
\end{array}
\right.
\]

\begin{remark}
Note that this $R$-matrix differs quite significantly from the $R$-matrix in the natural representation of
the usual quantum general linear group (see \eqref{eq:standard-R}) and that of
the quantum general linear supergroup (see \cite[p533]{Z98}).
\end{remark}

We have the following result.
\begin{lemma}\label{lem:R-matrix}
The matrix $R$ defined by \eqref{eq:R} has the following properties.
\begin{enumerate}
\item $R$ is invertible and satisfies the Yang-Baxter equation
\begin{eqnarray}\label{eq:YBE-R}
R_{12} R_{13}R_{23} = R_{23} R_{13} R_{12}.
\end{eqnarray}
\item For all $x\in \U_q(\gl_{m, n})$,
\begin{eqnarray}\label{eq:endo}
R(\nu\otimes\nu)\Delta(x) = (\nu\otimes\nu)\Delta'(x)R.
\end{eqnarray}
\end{enumerate}
\end{lemma}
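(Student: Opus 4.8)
The plan is to verify the two assertions by direct but structured computation on the explicit basis $\{v_a \otimes v_b\}$, exploiting the factorisation $R = R_0 \Theta$ together with the piecewise formula for the action of $R$ already recorded above. For part~(1), invertibility is immediate: $R_0$ is diagonal with nonzero entries $q_a$ (recall $q_a \in \{q, p\}$ and $p = -q^{-1}$, both nonzero in $\C(q)$), and $\Theta$ is unitriangular with respect to the ordering of basis vectors, hence invertible; therefore so is their product. For the Yang--Baxter equation \eqref{eq:YBE-R}, I would evaluate both $R_{12}R_{13}R_{23}$ and $R_{23}R_{13}R_{12}$ on an arbitrary basis vector $v_a \otimes v_b \otimes v_c$ and compare. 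The natural way to organise this is by the ordering type of the triple $(a,b,c)$ — the cases of all-distinct, two-equal, and all-equal indices — and within the all-distinct case the six orderings. Because $R$ acts by the same three-branch rule regardless of whether an index lies in the ``$\le m$'' or ``$>m$'' block (the block only enters through the scalar $q_a$ on the diagonal), the verification is formally parallel to the standard Hecke-type check; the factor $(q - q^{-1})$ governing the off-diagonal term is uniform across both blocks since $p - p^{-1} = q - q^{-1}$. I expect this casework to be routine once the branch rule is applied mechanically.

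For part~(2), the intertwining relation \eqref{eq:endo}, the strategy is to reduce to generators. Since both $\Delta$ and $\Delta'$ are algebra homomorphisms and $\nu$ is a representation, the set of $x$ satisfying \eqref{eq:endo} is a subalgebra; thus it suffices to check the identity on the generators $e_a$, $f_a$, and $K_b$. For $K_b$ both sides act diagonally and the check is trivial. For $e_a$ and $f_a$ the point is to compute the two composite operators $R \cdot (\nu\otimes\nu)\Delta(e_a)$ and $(\nu\otimes\nu)\Delta'(e_a) \cdot R$ on each $v_c \otimes v_d$ and match them, using $\Delta(e_a) = e_a \otimes k_a + 1 \otimes e_a$ against $\Delta'(e_a) = e_a \otimes 1 + k_a \otimes e_a$, together with \eqref{eq:act} and the eigenvalues $K_b^{\pm 1} v_c = q_b^{\pm\delta_{bc}} v_c$. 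Here the three branches of $R$ interact with the index shifts $e_a v_{a+1} = v_a$, and one must keep careful track of the scalar $q_a$ versus $q_a^{-1}$ produced by $k_a$; this is the one place where the degenerate sign (the value $q_a = p$ for $a > m$) must be handled attentively, but the uniformity $p - p^{-1} = q - q^{-1}$ again makes the two blocks behave identically.

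The main obstacle, such as it is, will be the bookkeeping in the intertwining computation for $e_a$ and $f_a$: one has to track how $R$ permutes the tensor factors across the branch boundaries $c < d$, $c = d$, $c > d$ while the coproduct simultaneously inserts a $k_a$-weighting on one factor. The cleanest route is to fix $a$ and partition the basis vectors $v_c \otimes v_d$ according to whether $\{c,d\}$ meets $\{a, a+1\}$, since $e_a$ acts nontrivially only when one index equals $a+1$; outside the small neighbourhood of $a$ both sides vanish or act diagonally, and only a handful of explicit pairs require genuine comparison. I would present the $e_a$ case in full for these critical pairs and obtain the $f_a$ case by invoking the symmetry afforded by the automorphism analogous to \eqref{eq:auto}, thereby avoiding a duplicate computation.
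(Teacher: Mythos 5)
Your overall strategy matches the paper's quite closely: invertibility from the factorisation $R=R_0\Theta$, the Yang--Baxter equation by evaluation on basis vectors organised by the coincidence pattern of $(a,b,c)$, and the intertwining relation \eqref{eq:endo} by reduction to the generators $e_a,f_a,K_b$. Your uniformity observation $p-p^{-1}=q-q^{-1}$ is exactly the right algebraic engine; in the paper it appears as the identity $q_a^2-q_a(q-q^{-1})-1=0$ for all $a\in I$, which is what closes the repeated-index cases of \eqref{eq:YBE-R}. One genuine difference of route in part (1): the paper never touches the six all-distinct orderings. It observes that $R$ agrees with the standard $\U_q(\gl_{m+n})$ $R$-matrix $T=T_0\Xi$ of \eqref{eq:standard-R} on every $v_a\otimes v_b$ with $a\ne b$, and since both sides of \eqref{eq:YBE-R} preserve the multiset of indices, the all-distinct case is inherited from the known Yang--Baxter equation for $T$; only triples with repeated indices require hand computation. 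Your full direct check also works, at the cost of redoing the standard Hecke-type verification. In part (2) the paper first conjugates by $R_0$ to reduce \eqref{eq:endo} to an identity for $\Theta$ written in matrix units; your basis-vector evaluation with the partition by whether an index equals $a+1$ is the same computation in different clothing.

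The genuine gap is your last step: obtaining the $f_a$ case of \eqref{eq:endo} from the $e_a$ case ``by invoking the symmetry afforded by the automorphism analogous to \eqref{eq:auto}, thereby avoiding a duplicate computation.'' This fails as stated, for two reasons. First, the automorphism $\omega$ ($e_a\mapsto f_a$, $f_a\mapsto e_a$, $K_b\mapsto K_b^{-1}$) is not implemented by any operator on $V$: the twisted representation $\nu\circ\omega$ has highest weight vector $v_{m+n}$ of weight $(1,\dots,1,p^{-1})$ (the weight of $V^*$), not $(q,1,\dots,1)$, so $\nu\circ\omega\not\cong\nu$ and there is no invertible $J\in\End_{\C(q)}(V)$ with $\nu(\omega(x))=J\nu(x)J^{-1}$. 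Second, $\omega$ interchanges the two coproducts, $(\omega\otimes\omega)\circ\Delta=\Delta'\circ\omega$, so even formally the identity produced from the $e_a$ relation is not \eqref{eq:endo} for $f_a$. The natural repair by linear algebra runs into the same obstruction: using $PR^{T}P=R$, transposing the $e_a$ relation and conjugating by the flip $P$ yields
\[
\bigl(\nu(k_a)\otimes\nu(f_a)+\nu(f_a)\otimes 1\bigr)R
= R\bigl(1\otimes\nu(f_a)+\nu(f_a)\otimes\nu(k_a)\bigr),
\]
in which $\nu(k_a)$ appears where \eqref{eq:endo} requires $\nu(k_a)^{-1}$ (transposition fixes the diagonal matrix $\nu(k_a)$ rather than inverting it), so this is a different identity from the one needed. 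The fix is cheap and is what the paper does: run the $e_a$ argument again for $f_a$, using $R_0^{-1}(\nu(f_a)\otimes\nu(k_a^{-1}))R_0=\nu(f_a)\otimes 1$ and $R_0^{-1}(1\otimes\nu(f_a))R_0=\nu(k_a)\otimes\nu(f_a)$, after which both sides of the reduced identity equal $(e_{a a}-e_{a+1,a+1})\otimes e_{a+1,a}$. As written, though, your proposal leaves the $f_a$ case unproved.
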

\begin{proof} (1). It is clear that
\[
\begin{aligned}
R_0^{-1}&= 1\otimes 1 + \sum_{a\in I} (q_a^{-1}-1)e_{a a}\otimes e_{a a}, \quad
\Theta^{-1}&=1\otimes 1 -  (q-q^{-1})\sum_{a<b}e_{a b}\otimes e_{b a}.
\end{aligned}
\]
Hence $R^{-1}=\Theta^{-1} R_0^{-1}$.

To prove that $R$ satisfies the Yang-Baxter equation, it is useful to recall the standard $R$-matrix  in the natural representation of $\U_q(\gl_{m+n})$. We  denote it by $T$, which can be expressed as
\begin{eqnarray}\label{eq:standard-R}
\begin{aligned}
T&=T_0\Xi, \\
T_0&= 1\otimes 1 + (q-1)\sum_{a\in I} e_{a a}\otimes e_{a a}, \\
\Xi&=1\otimes 1 + (q-q^{-1})\sum_{a<b}e_{a b}\otimes e_{b a}.
\end{aligned}
\end{eqnarray}
It is well known that $T$ satisfies the Yang-Baxter equation
\[
T_{12} T_{13} T_{23} = T_{23} T_{13} T_{12}.
\]

We prove \eqref{eq:YBE-R} by showing that it holds when acting on the basis vectors $v_a\otimes v_b\otimes v_c$ ($a, b, c\in I$) of $V\otimes V\otimes V$. Clearly $R(v_a\otimes v_b) = T(v_a\otimes v_b)$ for all $a\ne b$.
Thus for all $a, b, c$ which are pair-wise distinct,
\[
\begin{aligned}
&R_{12} R_{13}R_{23} (v_a\otimes v_b\otimes v_c)
= T_{12} T_{13} T_{23} (v_a\otimes v_b\otimes v_c)\\
&R_{23} R_{13} R_{12}(v_a\otimes v_b\otimes v_c)
= T_{23} T_{13} T_{12}(v_a\otimes v_b\otimes v_c).
\end{aligned}
\]
Hence \eqref{eq:YBE-R} holds when acting on the vectors $v_a\otimes v_b\otimes v_c$ such that $a, b, c$ are pair-wise distinct.

Now we need to consider the actions of \eqref{eq:YBE-R} on vectors
$v_a\otimes v_b\otimes v_c$  with two or all three of $v_a,  v_b,  v_c$   being the same.  If $a=b=c$, we have
\[
\begin{aligned}
R_{12} R_{13}R_{23} (v_a\otimes v_a\otimes v_a)&=q_a^3 v_a\otimes v_a\otimes v_a \\
&= R_{23} R_{13} R_{12} (v_a\otimes v_a\otimes v_a).
\end{aligned}
\]

If $a=c\ne b$, we have the three basis vectors
$
v_a\otimes v_a\otimes v_b$, $v_a\otimes v_b\otimes v_a$, $v_b\otimes v_a\otimes v_a,
$
in each of the cases with $a<b$ or $a>b$. Consider for example the vector
$v_b\otimes v_a\otimes v_a$ with $a<b$. Then we have
\[
\begin{aligned}
R_{12} R_{13}R_{23} (v_b\otimes v_a\otimes v_a)
&= q_a v_b\otimes v_a\otimes v_a + q_a(q-q^{-1}) v_a\otimes v_b\otimes v_a\\
&\quad + q_a^2(q-q^{-1}) v_a\otimes v_a\otimes v_b, \\
R_{23} R_{13} R_{12}(v_b\otimes v_a\otimes v_a)
&= q_a v_b\otimes v_a\otimes v_a + q_a(q-q^{-1}) v_a\otimes v_b\otimes v_a\\
&\quad + \big(q_a (q-q^{-1}) +1 \big)(q-q^{-1}) v_a\otimes v_a\otimes v_b.
\end{aligned}
\]
The right hand sides of the above equations are equal since
\[
q_a^2-q_a (q-q^{-1}) -1=0, \quad \forall a\in I.
\]
We can similarly show that \eqref{eq:YBE-R} holds when acting on the other basis vectors.

This proves that $R$ satisfies \eqref{eq:YBE-R}.

\medskip
(2). To prove the second part of the lemma, we only need to show that \eqref{eq:endo} holds for the generators of $\U_q(\gl_{m, n})$.

For $x=K_b$, $b\in I$, \eqref{eq:endo} is implied by
\[
\left(\nu(K_b)\otimes\nu(K_b)\right)\Theta=\Theta\left(\nu(K_b)\otimes\nu(K_b)\right),
\]
as $R_0$ clearly commutes with $(\nu\otimes\nu)\Delta(K_b)=\nu(K_b)\otimes\nu(K_b)=(\nu\otimes\nu)\Delta'(K_b)$.
The above relation can be proved by the following computation.
\[
\begin{aligned}
&\left(\nu(K_b)\otimes\nu(K_b)\right)\Theta\left(\nu(K_b^{-1})\otimes\nu(K_b^{-1})\right)\\
&=1\otimes 1+(q-q^{-1})\sum_{c<d} \nu(K_b)e_{c d}\nu(K_b^{-1})\otimes \nu(K_b)e_{d c}\nu(K_b^{-1})\\
&=1\otimes 1+(q-q^{-1})\sum_{c<d} q_b^{\delta_{bc}}e_{c d}q_b^{-\delta_{b d}}\otimes q_b^{\delta_{b d}}e_{d c}q_b^{-\delta_{b c}}=\Theta.
\end{aligned}
\]

For $x=e_a$, we note that
\[
R_0^{-1}(\nu(e_a)\otimes 1) R_0 = \nu(e_a)\otimes \nu(k_a^{-1}), \quad
R_0^{-1}(\nu(k_a)\otimes\nu(e_a)) R_0 =1\otimes \nu(e_a).
\]
Hence \eqref{eq:endo} for $x=e_a$ is equivalent to
\[
\Theta(\nu(e_a)\otimes \nu(k_a) + 1\otimes \nu(e_a)) =
(\nu(e_a)\otimes \nu(k_a^{-1}) + 1\otimes \nu(e_a)) \Theta.
\]
Write $Q=\sum\limits_{a<b}e_{a b}\otimes e_{b a}$; then $\Theta=1\otimes 1+(q-q^{-1})Q$.  The above equation can be re-written as
\begin{eqnarray}\label{eq:Q-e}
\begin{aligned}
\nu(e_a)\otimes \nu\left(\frac{k_a - k_a^{-1}}{q-q^{-1}}\right)
&= -Q(\nu(e_a)\otimes \nu(k_a) + 1\otimes \nu(e_a)) \\
&\quad + (\nu(e_a)\otimes \nu(k_a^{-1}) + 1\otimes \nu(e_a))Q.
\end{aligned}
\end{eqnarray}
By using \eqref{eq:rep}, we can easily show that
\[
\text{LHS of \eqref{eq:Q-e}}= e_{a, a+1}\otimes (e_{a a}-e_{a+1, a+1}).
\]
To consider the right hand side,  we note that
\[
\begin{aligned}
&Q(\nu(e_a)\otimes \nu(k_a)) = \sum_{c;\,  c<a}e_{c, a+1}\otimes e_{a c}, \quad
Q(1\otimes \nu(e_a)) =\sum_{d;\,  d>a} e_{a d}\otimes e_{d, a+1}, \\
& (1\otimes \nu(e_a))Q=\sum_{c; \, c<a+1}e_{c, a+1}\otimes e_{a c}, \quad
(\nu(e_a)\otimes \nu(k_a^{-1}))Q= \sum_{d;\,  d>a+1} e_{a d}\otimes e_{d, a+1}.
\end{aligned}
\]
Using these on the right hand side of \eqref{eq:Q-e}, we obtain
\[
\text{RHS of \eqref{eq:Q-e}}= e_{a, a+1}\otimes (e_{a a}-e_{a+1, a+1}).
\]
This proves \eqref{eq:Q-e} in this case.

To prove \eqref{eq:endo} for $x=f_a$, we use
\[
R_0^{-1}(\nu(f_a)\otimes \nu(k_a^{-1})) R_0 = \nu(f_a)\otimes 1, \quad
R_0^{-1}(1\otimes \nu(f_a)) R_0 =\nu(k_a)\otimes \nu(f_a),
\]
to re-write it as
\[
\Theta (\nu(f_a)\otimes 1+\nu(k_a^{-1})\otimes \nu(f_a))= (\nu(f_a)\otimes 1+\nu(k_a)\otimes \nu(f_a))\Theta.
\]
This is equivalent to
\begin{eqnarray}\label{eq:Q-f}
\begin{aligned}
\nu\left(\frac{k_a-k_a^{-1}}{q-q^{-1}}\right)\otimes\nu(f_a)
&=Q (\nu(f_a)\otimes 1+\nu(k_a^{-1})\otimes \nu(f_a))\\
&\quad - (\nu(f_a)\otimes 1+\nu(k_a)\otimes \nu(f_a))Q.
\end{aligned}
\end{eqnarray}
Similar calculations like those in the case of $e_a$ can show that both side of the above express are equal to $(e_{a a} - e_{a+1, a+1})\otimes e_{a+1, a}$.
This completes the proof.
\end{proof}

Let $P: V\otimes V\longrightarrow V\otimes V$,  $v\otimes v'\mapsto v'\otimes v$ for all $v, v' \in V$, be the permutation map, which can be expressed in terms of the matrix units as $P=\sum\limits_{a, b} e_{a b}\otimes e_{b a}$.  Define
\begin{eqnarray}\label{eq:R-check}
\check{R}:=P R.
\end{eqnarray}
The following result immediately follows from Lemma \ref{lem:R-matrix} and Example \ref{ex:VV}.
\begin{corollary}\label{cor:R-check}
The matrix $\check{R}$ is an invertible element of $\End_{\U_q(\gl_{m, n})}(V\otimes V)$. It satisfies the Yang-Baxter equation
\begin{eqnarray}\label{eq:YBE}
(\check{R}\otimes 1)(1\otimes \check{R})(\check{R}\otimes 1)=(1\otimes \check{R})(\check{R}\otimes 1)(1\otimes \check{R}),
\end{eqnarray}
and the quadratic relation
\begin{eqnarray}\label{eq:Hecke}
(\check{R}-q)(\check{R}+q^{-1})=0.
\end{eqnarray}
\end{corollary}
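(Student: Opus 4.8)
The plan is to establish each assertion of Corollary~\ref{cor:R-check} as a direct consequence of the results already in hand, treating $\check{R}=PR$ so that the permutation $P$ intertwines the two module structures. First I would record that, by part (2) of Lemma~\ref{lem:R-matrix}, the map $R$ satisfies $R(\nu\otimes\nu)\Delta(x)=(\nu\otimes\nu)\Delta'(x)R$ for all $x$. Composing with $P$ and using the elementary identity $P(\nu\otimes\nu)\Delta'(x)=(\nu\otimes\nu)\Delta(x)P$ (which holds because $\Delta'=P\Delta P$ as maps into $\End(V\otimes V)$, i.e. conjugation by $P$ swaps the two co-multiplications), I obtain $\check{R}(\nu\otimes\nu)\Delta(x)=(\nu\otimes\nu)\Delta(x)\check{R}$. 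This shows $\check{R}\in\End_{\U_q(\gl_{m,n})}(V\otimes V)$; its invertibility follows from the invertibility of both $P$ and $R$ established in part (1).

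Next I would deduce the braid form \eqref{eq:YBE} of the Yang--Baxter equation from the form \eqref{eq:YBE-R} satisfied by $R$. This is the standard translation: writing $\check{R}_i$ for $\check{R}$ acting in the $i,i{+}1$ tensor factors, one verifies the identity $\check{R}_1\check{R}_2\check{R}_1=\check{R}_2\check{R}_1\check{R}_2$ is equivalent to $R_{12}R_{13}R_{23}=R_{23}R_{13}R_{12}$ by inserting the definition $\check{R}=PR$ and repeatedly using the relations $P_{12}R_{13}=R_{23}P_{12}$, $P_{12}R_{23}=R_{13}P_{12}$ among permutations and $R$-matrices on $V^{\otimes 3}$. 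Since these are purely formal manipulations of permutations, I would state them briefly rather than expand every line.

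For the quadratic relation \eqref{eq:Hecke}, the cleanest route is to use the decomposition $V\otimes V=L_s\oplus L_a$ from Example~\ref{ex:VV}. Because $\check{R}$ is a $\U_q(\gl_{m,n})$-endomorphism and $L_s$, $L_a$ are non-isomorphic simple modules, Schur's lemma forces $\check{R}$ to act as a scalar on each summand. It then suffices to compute these two scalars on the respective highest weight vectors: applying $\check{R}=PR$ to $v_1\otimes v_1\in L_s$ gives the eigenvalue $q$ (since $R(v_1\otimes v_1)=q\,v_1\otimes v_1$ and $P$ fixes it), and applying it to $v_1\otimes v_2-q^{-1}v_2\otimes v_1\in L_a$ gives the eigenvalue $-q^{-1}$ after a short calculation using the explicit action of $R$ on $v_a\otimes v_b$ for $a<b$ and $a>b$. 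Hence the minimal polynomial of $\check{R}$ divides $(X-q)(X+q^{-1})$, which is exactly \eqref{eq:Hecke}.

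I expect the only genuine subtlety to be the bookkeeping in the second paragraph, namely confirming the permutation identities $P_{12}R_{13}=R_{23}P_{12}$ and the correct placement of indices when passing from $R_{12}R_{13}R_{23}=R_{23}R_{13}R_{12}$ to the braid relation; the intertwining property and the eigenvalue computation are both immediate given Lemma~\ref{lem:R-matrix} and Example~\ref{ex:VV}. Everything else reduces to invoking Schur's lemma together with the already-proven simplicity and inequivalence of $L_s$ and $L_a$.
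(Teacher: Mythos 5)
Your proposal is correct and follows essentially the same route as the paper: the intertwining property, invertibility and the braid-form Yang--Baxter equation are read off from Lemma~\ref{lem:R-matrix} (the paper simply declares these ``clear''), and the quadratic relation \eqref{eq:Hecke} is obtained exactly as in the paper, by using the decomposition $V\otimes V=L_s\oplus L_a$ of Example~\ref{ex:VV} so that $\check{R}$ acts by a scalar on each simple summand, with the eigenvalues $q$ and $-q^{-1}$ computed on the highest weight vectors $v_1\otimes v_1$ and $v_1\otimes v_2-q^{-1}v_2\otimes v_1$. The only difference is that you spell out the $P$-conjugation and permutation bookkeeping that the paper leaves implicit, which is fine.
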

\begin{proof}
All the statements are clear from Lemma \ref{lem:R-matrix} except the spectral decomposition
\eqref{eq:Hecke}.

By Example \ref{ex:VV},  there are two simple submodules $L_s$ and $L_a$ in $V\otimes V$.
They are  eigenspaces of $\check{R}\in\End_{\U_q(\gl_{m, n})}(V\otimes V)$.
We can determine the eigenvalues by considering the action of $\check{R}$ on the respective highest weight vectors.

Recall that the highest weight vector of $L_s$ is $v_1\otimes v_1$.
It is immediate to calculate $\check{R}(v_1\otimes v_1) = q v_1\otimes v_1$.

The highest weight vector of $L_a$ is $w= v_1\otimes v_2 -q^{-1} v_2\otimes v_1$.  We have $\Theta w = q^{-2} w'$, where $w'=v_1\otimes v_2 -q v_2\otimes v_1$.  Now $R_0 w'=w'$ and $Pw' = - q w$. Hence $\check{R}(w)= -q^{-1} w$. This proves \eqref{eq:Hecke}.
\end{proof}

Finally we return to the problem raised in the last section about isomorphisms of tensor product modules defined relative to $\Delta$ and $\Delta'$.   Recall the definition of $\Delta^{(r-1)}$  given in \eqref{eq:Delta-r}. We can similarly define ${\Delta'}^{(r-1)}$.
The following result is an easy corollary of Lemma \ref{lem:R-matrix}.
\begin{corollary}\label{cor:iso}
Denote the $\U_q(\gl_{m, n})$-module $V^{\otimes r}$ defined relative to $\Delta^{(r-1)}$  (resp. ${\Delta'}^{(r-1)}$) by $(V^{\otimes r}, \Delta^{(r-1)})$ (resp. $(V^{\otimes r}, {\Delta'}^{(r-1)})$).  Then
$(V^{\otimes r}, \Delta^{(r-1)})$  is isomorphic to  $(V^{\otimes r}, {\Delta'}^{(r-1)})$ for any $r\ge 2$.
\end{corollary}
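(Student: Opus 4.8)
The plan is to interpret Lemma~\ref{lem:R-matrix} as the statement that $R$ is an isomorphism of $\U_q(\gl_{m,n})$-modules when $r=2$, and then to propagate it to arbitrary $r$ by combining the braiding with the reversal of tensor factors. Parts (1) and (2) of Lemma~\ref{lem:R-matrix} say precisely that $R$ is invertible and that $R(\nu\otimes\nu)\Delta(x)=(\nu\otimes\nu)\Delta'(x)R$ for all $x$; in other words $R\colon (V\otimes V,\Delta)\to(V\otimes V,\Delta')$ is an isomorphism of modules, which is the case $r=2$. By Corollary~\ref{cor:R-check} this is equivalent to the fact that $\check R=PR$ is a module \emph{automorphism} of $(V\otimes V,\Delta)$, and it is $\check R$ that I would lift to higher tensor powers.

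For general $r$ set $\check R_i:=\id^{\otimes(i-1)}\otimes\check R\otimes\id^{\otimes(r-i-1)}$ on $V^{\otimes r}$ for $1\le i\le r-1$. The first step is to verify that each $\check R_i$ is an endomorphism of $(V^{\otimes r},\Delta^{(r-1)})$: by coassociativity the restriction of $\Delta^{(r-1)}(x)$ to the $i$-th and $(i+1)$-th tensor factors is $(\nu\otimes\nu)\Delta$ evaluated on a single coproduct component of $x$, and $\check R$ commutes with every such operator by Corollary~\ref{cor:R-check}, while $\check R_i$ acts as the identity elsewhere. The Yang--Baxter equation \eqref{eq:YBE}, together with $\check R_i\check R_j=\check R_j\check R_i$ for $|i-j|>1$, then makes $i\mapsto\check R_i$ into an action of the braid group on $(V^{\otimes r},\Delta^{(r-1)})$ by module automorphisms. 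Fixing a reduced word for the longest permutation $w_0\in S_r$ and forming the corresponding product yields an invertible $\check R_{w_0}\in\End_{\U_q(\gl_{m,n})}(V^{\otimes r},\Delta^{(r-1)})$.

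Finally I would bring in the flip $\mathcal P\colon u_1\otimes\cdots\otimes u_r\mapsto u_r\otimes\cdots\otimes u_1$. Because the iterated opposite co-multiplication is the conjugate of $\Delta^{(r-1)}$ from \eqref{eq:Delta-r} by the reversal of tensor factors, one has ${\Delta'}^{(r-1)}(x)=\mathcal P\,\Delta^{(r-1)}(x)\,\mathcal P^{-1}$ as operators on $V^{\otimes r}$, so $\mathcal P$ is an isomorphism $(V^{\otimes r},\Delta^{(r-1)})\to(V^{\otimes r},{\Delta'}^{(r-1)})$. A clean choice that genuinely uses the $R$-matrix, and that reduces to $R$ when $r=2$, is then $\mathcal R^{(r)}:=\mathcal P\circ\check R_{w_0}$: it is a composite of module maps, hence intertwines $\Delta^{(r-1)}$ and ${\Delta'}^{(r-1)}$, and is invertible as a composite of invertible maps. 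I expect the only genuinely technical point to be the coassociativity bookkeeping showing $\check R_i\in\End(V^{\otimes r},\Delta^{(r-1)})$; this is best organised by induction on $r$, peeling off one tensor factor at a time using the two equivalent bracketings of $\Delta^{(r-1)}$, while the flip identity and the braid relations are formal.
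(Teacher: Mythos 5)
Your argument is correct, and it takes a genuinely different route from the paper's. The paper's proof is a two-line assertion: for $r=2$ the isomorphism is $R$ itself (part (2) of Lemma~\ref{lem:R-matrix}), and for $r>2$ it is claimed to be the operator $R_{1r}R_{2r}\cdots R_{r-1,r}$. You instead factor the isomorphism as $\mathcal{P}\circ\check{R}_{w_0}$, where the reversal $\mathcal{P}$ already intertwines $\Delta^{(r-1)}$ with ${\Delta'}^{(r-1)}$, and $\check{R}_{w_0}$ is an automorphism of $(V^{\otimes r},\Delta^{(r-1)})$ supplied by the braid-group action of Corollary~\ref{cor:R-check}. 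One consequence of your factorisation is worth stating plainly: since $\mathcal{P}$ alone is already an isomorphism $(V^{\otimes r},\Delta^{(r-1)})\to(V^{\otimes r},{\Delta'}^{(r-1)})$, the corollary holds for \emph{any} Hopf algebra and any module, with no $R$-matrix input at all; composing with $\check{R}_{w_0}$ is a choice you make so that the map restricts to $R$ at $r=2$ and is built from the braiding, which is evidently what the paper wishes to showcase.

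The more substantial point is that your route repairs a defect in the paper's own proof. Using \eqref{eq:endo} and coassociativity exactly as in your first step, one finds for $r=3$ that
\begin{equation*}
R_{13}R_{23}\,(\nu\otimes\nu\otimes\nu)\Delta^{(2)}(x)
=\Big(\sum_{(x)}\nu(x_{(2)})\otimes\nu(x_{(3)})\otimes\nu(x_{(1)})\Big)\,R_{13}R_{23},
\end{equation*}
i.e.\ $R_{13}R_{23}$ conjugates $\Delta^{(2)}$ into the \emph{cyclic rotation} $(\Delta\otimes\id)\Delta'$ of the coproduct, not into ${\Delta'}^{(2)}(x)=\sum_{(x)} x_{(3)}\otimes x_{(2)}\otimes x_{(1)}$. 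These two operators genuinely differ: for $x=e_a$ the cyclic one contains the term $\nu(e_a)\otimes\nu(k_a)\otimes 1$ where the reversed one has $\nu(k_a)\otimes\nu(e_a)\otimes 1$, and the two act differently on $v_{a+1}\otimes v_{a+1}\otimes v_1$ because $\nu(k_a)v_{a+1}=q_{a+1}^{-1}v_{a+1}\neq v_{a+1}$. So the paper's operator $R_{1r}\cdots R_{r-1,r}$ is not an intertwiner for $r\geq 3$; a correct intertwiner must involve all $\binom{r}{2}$ pairs, e.g.\ $R_{12}R_{13}R_{23}$ when $r=3$. Your map does precisely this: writing $\check{R}=PR$ and pushing all flips to the left, $\mathcal{P}\circ\check{R}_{w_0}$ unwinds for $r=3$ to $R_{23}R_{13}R_{12}$, which equals $R_{12}R_{13}R_{23}$ by the Yang--Baxter equation \eqref{eq:YBE-R}, and similarly for general $r$ one obtains a full ordered product over all pairs. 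In short: your proof is valid, is independent of the $R$-matrix where the paper's is not, and in its $R$-matrix form supplies the factors missing from the formula in the paper's proof.
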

\begin{proof}
For $r=2$, it is obvious from part (2) of  Lemma \ref{lem:R-matrix} that the isomorphism is provided by the $R$-matrix.  For $r>2$,  an isomorphism is given by
$R_{1 r} R_{2 r} \dots R_{r-1, r}$.
\end{proof}

\begin{remark}
An interesting problem is the decomposition of $V^{\otimes r}$ for all $r$.
A first step in studying this problem is to understand the endomorphism algebras
$\End_{\U_q(\gl_{m, n})}(V^{\otimes r})$; see Remark \ref{rem:double-comm} for further discussions.
\end{remark}

\subsection{The case of $\U_q(\fsl_{2, 1})$ -- an example}\label{sect:reps-sl21}
In this section, we study the representation theory of $\U_q(\fsl_{2, 1})$ in more depth.
An explicit basis will be constructed for each finite dimensional simple $\U_q(\fsl_{2, 1})$-module.

We will need the following result later.
\begin{lemma}\label{lem:ff}
Let $F:=f_1 f_2 - q f_2 f_1$. Then the following relations  hold
\begin{eqnarray}
&f_1 F  = q^{-1} F f_1, \quad   f_2 F = - q^{-1}Ff_2,  \quad F^2=0; \label{eq:ff-1}\\
&e_1 F - F e_1 = f_2 k_1^{-1}, \quad e_2 F - F e_2 =-q f_1 k_2, \label{eq:ff-2}\\
&f_1^k f_2 = [k]_q F f_1^{k-1} + q^k f_2 f_1^k.  \label{eq:ff-3}
\end{eqnarray}
\end{lemma}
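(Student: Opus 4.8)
The plan is to establish the three groups of relations in the order \eqref{eq:ff-1}, \eqref{eq:ff-2}, \eqref{eq:ff-3}, since each group feeds into the next: the commutation relations in \eqref{eq:ff-1} are the workhorses for both $F^2=0$ and the induction in \eqref{eq:ff-3}. Throughout I rely only on the defining relations of $\U_q(\fsl_{2,1})$, chiefly the Serre relation \eqref{eq:relat-7}, the nilpotency $f_2^2=0$ from \eqref{eq:relat-8}, the commutator relation \eqref{eq:relat-5}, and the weight relations \eqref{eq:relat-2}.

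First I would prove $f_1 F = q^{-1} F f_1$ directly from \eqref{eq:relat-7}: expanding $f_1 F - q^{-1} F f_1 = f_1^2 f_2 - (q+q^{-1}) f_1 f_2 f_1 + f_2 f_1^2$, which is exactly the left-hand side of the cubic Serre relation and hence vanishes. The relation $f_2 F = -q^{-1} F f_2$ is even quicker: since $f_2^2=0$, both $f_2 F = f_2 f_1 f_2$ and $F f_2 = -q f_2 f_1 f_2$, and comparing the two gives the claim. For $F^2=0$ I would avoid a brute-force expansion and instead exploit these two relations. Rewriting them as $F f_1 = q f_1 F$ and $F f_2 = -q f_2 F$, I can slide the leading factor of $F$ through the second factor $F = f_1 f_2 - q f_2 f_1$ to obtain $F^2 = -q^2 f_1 f_2 F + q^3 f_2 f_1 F = -q^2(f_1 f_2 - q f_2 f_1)F = -q^2 F^2$; since $1+q^2\neq 0$ in $\C(q)$, this forces $F^2=0$.

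Next, for \eqref{eq:ff-2} I would compute the commutators $[e_1,F]$ and $[e_2,F]$ term by term using the Leibniz rule together with $[e_i,f_j]=\delta_{ij}(k_i-k_i^{-1})/(q-q^{-1})$ from \eqref{eq:relat-5}. Only the diagonal brackets survive, so the residual task is to move the resulting $k_i^{\pm 1}$ past the remaining $f_j$ via the weight relations \eqref{eq:relat-2} and then collect terms. For $[e_1,F]$ one commutes $k_1^{\pm 1}$ past $f_2$ (picking up $q^{\pm 1}$) and the two contributions collapse to $f_2 k_1^{-1}$; for $[e_2,F]$ one commutes $k_2^{\pm 1}$ past $f_1$ and uses $1-q^2 = -q(q-q^{-1})$ to reach $-q f_1 k_2$.

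Finally, \eqref{eq:ff-3} I would prove by induction on $k$. The base case $k=1$ is just the definition of $F$ rewritten as $f_1 f_2 = F + q f_2 f_1$. For the inductive step I would multiply the inductive hypothesis on the left by $f_1$, use $f_1 F = q^{-1} F f_1$ from \eqref{eq:ff-1} to slide $f_1$ through $F$, and use the base identity to expand $f_1 f_2$; the coefficient of $F f_1^k$ then combines as $q^{-1}[k]_q + q^k = [k+1]_q$, closing the induction. I expect the only genuinely delicate step to be the $q$-power and sign bookkeeping in \eqref{eq:ff-2}, where one must correctly commute $k_i^{\pm 1}$ through the $f_j$ and cancel the two residual terms; everything else reduces cleanly to the Serre relation, the nilpotency $f_2^2=0$, or the short induction.
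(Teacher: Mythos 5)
Your proposal is correct, and it is essentially the paper's own argument: the paper states the proof is straightforward and omits the details, and the intended verification is exactly what you carry out — the cubic Serre relation \eqref{eq:relat-7} for $f_1F=q^{-1}Ff_1$, nilpotency $f_2^2=0$ for $f_2F=-q^{-1}Ff_2$, sliding $F$ through itself to get $(1+q^2)F^2=0$, the Leibniz/weight-relation computation for \eqref{eq:ff-2}, and the short induction with $q^{-1}[k]_q+q^k=[k+1]_q$ for \eqref{eq:ff-3}. All of your individual computations check out against the defining relations of $\U_q(\fsl_{2,1})$.
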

\begin{proof}
The proof is  straightforward; we omit the details.
\end{proof}

Now given any pair $\lambda:=(\lambda_1, \lambda_2)$ of scalars $\lambda_i\in\C(q)$ which are both nonzero, let  $L(\lambda)$ be the simple $\U_q(\fsl_{2, 1})$-module with the highest weight vector  $v_\lambda$ such that
\[
e_i v_\lambda=0, \quad  k_iv_\lambda=\lambda_i v_\lambda, \quad i=1, 2.
\]
It follows from Theorem \ref{thm:fd-irreps} that
\begin{lemma}
The simple $\U_q(\fsl_{2,1})$ module $L(\lambda)$ with highest weight $\lambda=(\lambda_1, \lambda_2)$ is finite dimensional if and only if $\lambda_1 = \pm q^\ell$ for some nonnegative integer $\ell$.
\end{lemma}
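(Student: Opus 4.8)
The plan is to read the lemma off \thmref{thm:fd-irreps} after matching the highest weight data of $\U_q(\fsl_{2, 1})$ with the $m=2$, $n=1$ specialisation of that theorem. Boosting $\U_q(\fsl_{2, 1})$ to $\U_q(\gl_{2, 1})$ as in the remark following \thmref{thm:sl21}, a highest weight vector $v_\lambda$ with $k_i v_\lambda=\lambda_i v_\lambda$ becomes one with $K_b v=\Lambda_b v$, where $\lambda_1=\Lambda_1/\Lambda_2$ and $\lambda_2=\Lambda_2/\Lambda_3$ are the eigenvalues of $k_1=K_1K_2^{-1}$ and $k_2=K_2K_3^{-1}$. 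Since $q_1=q$ and $m=2$, the criterion \eqref{eq:fd} involves only the index $a=1$ (the value $a=m=2$ is excluded), where it reads $\lambda_1=\omega_1 q^{\ell_1}=\pm q^{\ell}$ with $\ell\in\Z_+$. Thus \thmref{thm:fd-irreps} yields the assertion, with no constraint imposed on $\lambda_2$ beyond $\lambda_2\ne 0$.

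It is worth recording both implications intrinsically. For \emph{necessity}, I would restrict the finite dimensional module $L(\lambda)$ to the subalgebra generated by $e_1,f_1,k_1^{\pm 1}$; by \eqref{eq:relat-3} and \eqref{eq:relat-5} this is an ordinary copy of $\U_q(\fsl_2)$. The vector $v_\lambda$ is a highest weight vector for it and generates a cyclic highest weight $\U_q(\fsl_2)$-module of highest weight $\lambda_1$ sitting inside $L(\lambda)$, hence finite dimensional. The classical highest weight theory of $\U_q(\fsl_2)$ then forces $\lambda_1=\pm q^{\ell}$, since these are exactly the weights occurring as highest weights of finite dimensional $\U_q(\fsl_2)$-modules.

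For \emph{sufficiency}, I would run the generalised Verma construction of \secref{sect:rep-theory}. When $\lambda_1=\pm q^{\ell}$ the simple $\U_q(\fg_0)$-module $L^0(\lambda)$, with $\U_q(\fg_0)=\U_q(\gl_2)\otimes\U_p(\gl_1)$, is finite dimensional, and $L(\lambda)$ is the simple quotient of $V(\lambda)=\U_q(\fu_-)\otimes L^0(\lambda)$. The decisive finiteness input is that $\U_q(\fu_-)$ is finite dimensional: here it is generated by $E_{31}$ and $E_{32}$, and \eqref{eq:nil} gives $E_{31}^2=E_{32}^2=0$ while \lemref{lem:Ekikj} lets us order products, so $\{1,E_{31},E_{32},E_{31}E_{32}\}$ spans $\U_q(\fu_-)$ (consistent with \propref{prop:para}). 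Hence $V(\lambda)$, and a fortiori $L(\lambda)$, is finite dimensional.

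The one point deserving care is the asymmetry of the two nodes: one must confirm that the degenerate node (the $k_2,e_2,f_2$ direction) places no integrality condition on $\lambda_2$. This is precisely where the nilpotency $e_2^2=f_2^2=0$ of \eqref{eq:relat-8} enters, forcing the corresponding $\U_q(\fu_-)$-directions to be finite dimensional for \emph{every} nonzero $\lambda_2$ and thereby isolating $\lambda_1=\pm q^{\ell}$ as the sole constraint.
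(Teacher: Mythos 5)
Your proposal is correct and takes essentially the same route as the paper, whose entire published argument for this lemma is that it follows from \thmref{thm:fd-irreps} specialised to $m=2$, $n=1$. Your additional intrinsic arguments (necessity via restriction to the $\U_q(\fsl_2)$ subalgebra generated by $e_1, f_1, k_1^{\pm 1}$, and sufficiency via the finite-dimensionality of $\U_q(\fu_-)$ spanned by $\{1, E_{31}, E_{32}, E_{31}E_{32}\}$) simply make explicit what the paper's one-line deduction and the proof of \thmref{thm:fd-irreps} leave implicit.
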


\begin{lemma} \label{lem:typical}
Let $\lambda=(\lambda_1, \lambda_2)$ with $\lambda_1=\pm q^\ell$ for some nonnegative integer $\ell$. Then the simple $\U_q(\fsl_{2, 1})$-module $L(\lambda)$ has dimension $4(\ell +1)$ if and only if
\begin{eqnarray}\label{eq:typical}
(q \lambda_1\lambda_2 - q^{-1} \lambda_1^{-1}\lambda_2^{-1})(\lambda_2 - \lambda_2^{-1}) \ne 0.
\end{eqnarray}
In this case,  the following set of vectors forms a basis of $L(\lambda)$.
\begin{eqnarray}\label{eq:span-set}
f_1^k v_\lambda,  \ f_2 f_1^k v_\lambda, \ F f_1^k v_\lambda, \ F f_2 f_1^k v_\lambda, \quad k=0, 1, \dots, \ell.
\end{eqnarray}
\end{lemma}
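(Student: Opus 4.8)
The plan is to reduce the dimension assertion to a question of linear independence, and then to test independence weight space by weight space, using raising operators to transport the lowest vectors back to the highest-weight line $\C v_\lambda$. First I would establish that \eqref{eq:span-set} spans $L(\lambda)$, so that $\dim L(\lambda)\le 4(\ell+1)$ with equality precisely when \eqref{eq:span-set} is a basis. For this, let $W$ be the span of the $4(\ell+1)$ vectors in \eqref{eq:span-set}. Using the relations of Lemma~\ref{lem:ff} (notably $F^2=f_2^2=0$, $f_1F=q^{-1}Ff_1$ and $f_2F=-q^{-1}Ff_2$) together with $f_1^{\ell+1}v_\lambda=0$ — which holds because $\lambda_1=\pm q^\ell$ makes $v_\lambda$ generate an $(\ell+1)$-dimensional $\U_q(\fsl_2)$-string — one checks that $W$ is stable under every generator $e_i,f_i,k_i^{\pm1}$. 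Since $v_\lambda\in W$ and $L(\lambda)$ is simple, $W=L(\lambda)$, and the lemma amounts to proving that \eqref{eq:span-set} is linearly independent if and only if \eqref{eq:typical} holds.

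All vectors in \eqref{eq:span-set} are joint eigenvectors of $k_1,k_2$, so independence can be tested inside each joint weight space. A short weight count shows that the only coincidences among the four families occur between $f_2f_1^kv_\lambda$ and $Ff_1^{k-1}v_\lambda$ for $1\le k\le\ell$; hence each weight space is at most two-dimensional, the two-dimensional ones being exactly those pairs. The crux of the whole proof is the claim that $Ff_2f_1^kv_\lambda\neq0$ for $0\le k\le\ell$ if and only if \eqref{eq:typical} holds. To prove it I would introduce $E:=q^{-1}e_1e_2-e_2e_1$, for which \eqref{eq:ff-2} yields $[E,F]=\frac{k_1k_2-k_1^{-1}k_2^{-1}}{q-q^{-1}}$ and $Ef_2v_\lambda=0$. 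Because $Ff_2=-qf_2f_1f_2$ and $f_2^2=0$, one gets $[e_1,Ff_2]=0$, so $e_1^k\,Ff_2f_1^kv_\lambda=a_k\,Ff_2v_\lambda$ with $a_k\neq0$ for $k\le\ell$. Applying $e_2E$ and using $Ef_2v_\lambda=0$ then gives
\[
e_2E\,Ff_2v_\lambda
=-\,\frac{\bigl(q\lambda_1\lambda_2-q^{-1}\lambda_1^{-1}\lambda_2^{-1}\bigr)\bigl(\lambda_2-\lambda_2^{-1}\bigr)}{(q-q^{-1})^2}\,v_\lambda ,
\]
so $e_2Ee_1^k$ carries $Ff_2f_1^kv_\lambda$ to a nonzero scalar multiple of \eqref{eq:typical} times $v_\lambda$, which is nonzero exactly under \eqref{eq:typical}.

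For sufficiency, assuming \eqref{eq:typical} I would bootstrap from this nonvanishing: $f_2f_1^kv_\lambda\neq0$ because $F\cdot f_2f_1^kv_\lambda=Ff_2f_1^kv_\lambda$, and $Ff_1^kv_\lambda\neq0$ because $f_2\cdot Ff_1^kv_\lambda=-q^{-1}Ff_2f_1^kv_\lambda$, while the $f_1^kv_\lambda$ are nonzero by $\U_q(\fsl_2)$-theory; thus every one-dimensional weight space is spanned by a nonzero vector. In a two-dimensional space, applying $f_2$ to a relation $\alpha f_2f_1^kv_\lambda+\beta Ff_1^{k-1}v_\lambda=0$ kills the first term and sends the second to $-q^{-1}\beta\,Ff_2f_1^{k-1}v_\lambda\neq0$, forcing $\beta=0$ and then $\alpha=0$. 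Hence \eqref{eq:span-set} is independent and $\dim L(\lambda)=4(\ell+1)$.

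The hard part is the converse. When \eqref{eq:typical} fails I would locate a nonzero singular vector, which must die in the simple module $L(\lambda)$ and so yields a nontrivial relation among \eqref{eq:span-set}. If $\lambda_2=\lambda_2^{-1}$ this is easy: $e_1f_2v_\lambda=0$ and $e_2f_2v_\lambda=\tfrac{\lambda_2-\lambda_2^{-1}}{q-q^{-1}}v_\lambda=0$, so $f_2v_\lambda$ is singular, hence $0$ in $L(\lambda)$, and the spanning set contains the zero vector. The delicate case — and the main obstacle — is $q\lambda_1\lambda_2=q^{-1}\lambda_1^{-1}\lambda_2^{-1}$ with $\lambda_2\neq\pm1$, where the naive candidate $Ff_2v_\lambda$ is \emph{not} singular. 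The right vector is $u:=e_2Ff_2v_\lambda$: since $[e_1,Ff_2]=0$ gives $e_1Ff_2v_\lambda=0$, one has $EFf_2v_\lambda=q^{-1}e_1u$, which vanishes precisely in this case, so $e_1u=0$, while $e_2u=e_2^2Ff_2v_\lambda=0$. A computation with \eqref{eq:ff-2} expresses $u=\bigl(\tfrac{\lambda_2-\lambda_2^{-1}}{q-q^{-1}}+q\lambda_2\bigr)Ff_1^0v_\lambda+q^2\lambda_2\,f_2f_1^1v_\lambda$, a combination of two spanning vectors whose coefficient $q^2\lambda_2$ is nonzero; as $u=0$ in $L(\lambda)$, this forces a dependence and $\dim L(\lambda)<4(\ell+1)$. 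Identifying $e_2Ff_2v_\lambda$ rather than $Ff_2v_\lambda$ as the singular vector, and verifying that its expansion is nontrivial, is the step I expect to require the most care.
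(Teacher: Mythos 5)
Your route is genuinely different from the paper's, and most of it is sound. The paper runs a single chain of equivalences, uniform in $\ell$ and covering both implications at once: linear dependence of \eqref{eq:span-set} is equivalent (by applying $f_2$, $F$ and then $e_1^{k_0}$, exactly as you do) to $Ff_2v_\lambda=0$; then, using repeatedly the fact that a weight vector of non-highest weight in the simple module vanishes if and only if $e_1$ and $e_2$ kill it, one gets $Ff_2v_\lambda=0 \Leftrightarrow e_2Ff_2v_\lambda=0 \Leftrightarrow (q\lambda_1\lambda_2-q^{-1}\lambda_1^{-1}\lambda_2^{-1})f_2v_\lambda=0 \Leftrightarrow$ \eqref{eq:atypical}. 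You instead split the statement: sufficiency via the auxiliary element $E=q^{-1}e_1e_2-e_2e_1$ and the commutator $[E,F]$, transporting $Ff_2f_1^kv_\lambda$ back to $\C(q)v_\lambda$ by $e_2Ee_1^k$, plus a weight-space analysis; necessity via explicit singular vectors. Two genuine advantages of your write-up: you actually prove that \eqref{eq:span-set} spans $L(\lambda)$ (the paper leaves this implicit, relying on Proposition~\ref{prop:para}), and your observation that the only weight coincidences are $f_2f_1^kv_\lambda \leftrightarrow Ff_1^{k-1}v_\lambda$ is correct and makes the independence step transparent.

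There is, however, a genuine gap in your necessity argument, in the delicate case $q\lambda_1\lambda_2=q^{-1}\lambda_1^{-1}\lambda_2^{-1}$, $\lambda_2\neq\pm1$, when $\ell=0$. Then $\lambda_1=\pm 1$ and $\lambda_2=\pm q^{-1}$; the vector $f_1v_\lambda$ is singular, hence zero in $L(\lambda)$, so $f_2f_1v_\lambda=0$, and the other coefficient in your expansion, $\frac{\lambda_2-\lambda_2^{-1}}{q-q^{-1}}+q\lambda_2=\frac{q^2\lambda_2-\lambda_2^{-1}}{q-q^{-1}}$, also vanishes for $\lambda_2=\pm q^{-1}$. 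So $u=e_2Ff_2v_\lambda$ is identically zero, and the relation $u=0$ is vacuous: it involves $f_2f_1^1v_\lambda$, which is not even among the $4(\ell+1)=4$ vectors of \eqref{eq:span-set}, and it yields no dependence among them; in particular the step ``whose coefficient $q^2\lambda_2$ is nonzero, \dots\ this forces a dependence'' fails here. The patch is one line and is precisely the paper's mechanism: since $e_1Ff_2v_\lambda=0$ always and $e_2Ff_2v_\lambda=0$ in this case, the vector $Ff_2v_\lambda$ is itself singular of non-highest weight, hence zero in $L(\lambda)$, which is the required degeneration of \eqref{eq:span-set}. For $\ell\ge 1$ your argument is fine as stated.

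Two smaller points. First, $[E,F]$ does not follow from \eqref{eq:ff-2} alone; it needs its own computation, and with your conventions its sign is $[E,F]=-\frac{k_1k_2-k_1^{-1}k_2^{-1}}{q-q^{-1}}$. A quick check on the highest weight vector, using \eqref{eq:ff-2}, gives $EFv_\lambda=-\lambda_2\frac{\lambda_1-\lambda_1^{-1}}{q-q^{-1}}v_\lambda-\lambda_1^{-1}\frac{\lambda_2-\lambda_2^{-1}}{q-q^{-1}}v_\lambda=-\frac{\lambda_1\lambda_2-\lambda_1^{-1}\lambda_2^{-1}}{q-q^{-1}}v_\lambda$. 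Second, and consequently, your displayed value of $e_2EFf_2v_\lambda$ has the wrong sign: the degenerate relation $k_2f_2=-f_2k_2$ gives $k_1k_2f_2v_\lambda=-q\lambda_1\lambda_2f_2v_\lambda$, whence $e_2EFf_2v_\lambda=+\frac{(q\lambda_1\lambda_2-q^{-1}\lambda_1^{-1}\lambda_2^{-1})(\lambda_2-\lambda_2^{-1})}{(q-q^{-1})^2}v_\lambda$. Both slips are harmless for your argument, since only the non-vanishing of this scalar is used, but they should be corrected.
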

\begin{proof}
Note that  $\dim L(\lambda)<4 (\ell+1)$ if and only if the vectors given in \eqref{eq:span-set} are
linearly dependent. In this case, taking any nontrivial linear combination of these vectors which vanishes, we may apply $f_2$, $F$ or both to it to obtain
\[
F f_2 f_1^{k_0} v_\lambda =0
\]
for  some nonnegative integer ${k_0}\le \ell$, by using the facts that $f_2^2=0$ (see \eqref{eq:relat-8}),  $f_2 F = -q^{-1} F f_2$ and $F^2=0$ (see \eqref{eq:ff-1}).

By using the first relation in \eqref{eq:ff-2} and also  $f_2^2=0$ again, we obtain
$e_1 F f_2 f_1^{k_0} v_\lambda  =F f_2 e_1 f_1^{k_0} v_\lambda$, and hence
$e_1^{k_0} F f_2 f_1^{k_0} v_\lambda  =F f_2 e_1^{k_0} f_1^{k_0} v_\lambda$.
By repeatedly applying the well known formula
\begin{eqnarray}\label{eq:e-fpower}
[e_1, f_1^k] = [k]_q f_1^{k-1} \frac{k_1 q^{1-k} - k_1^{-1}q^{k-1}}{q-q^{-1}} \quad \text{with } \ [k]_q:=\frac{q^k - q^{-k}}{q-q^{-1}},
\end{eqnarray}
we obtain
\[
e_1^{k_0} f_1^{k_0} v_\lambda = c_{k_0} v_\lambda, \quad   c_{k_0}:=\prod_{k=1}^{k_0}[k]_q[\ell+1-k]_q\ne 0.
\]
Hence $e_1^{k_0} F f_2 f_1^{k_0} v_\lambda = c_{k_0} F f_2 v_\lambda=0$,  i.e.,
\begin{eqnarray}\label{eq:bottom}
F f_2 v_\lambda=0.
\end{eqnarray}
This is the necessary and sufficient condition for $\dim L(\lambda)<4 (\ell+1)$.

The vector $F f_2 v_\lambda$ is clearly not the highest weight vector. Thus it vanishes if and only if
\[
e_2 F f_2 v_\lambda= e_1 F f_2 v_\lambda=0.
\]
The second condition is trivial in view of the first relation in \eqref{eq:ff-2} and the fact that $f_2^2=0$.  From the first one, we obtain
\[
v:=\left(\frac{\lambda_2 - \lambda_2^{-1}}{q-q^{-1}} F  + q \lambda_2 f_1 f_2\right)v_\lambda=0.
\]
Again this holds if and only if $e_1 v=e_2 v=0$. The second condition is always true. The first leads to
\[
(q \lambda_1\lambda_2 - q^{-1} \lambda_1^{-1}\lambda_2^{-1}) f_2 v_\lambda=0.
\]
This holds if and only if
\[
(q \lambda_1\lambda_2 - q^{-1} \lambda_1^{-1}\lambda_2^{-1})e_2 f_2 v_\lambda = (q \lambda_1\lambda_2 - q^{-1} \lambda_1^{-1}\lambda_2^{-1})(\lambda_2 - \lambda_2^{-1}) v_\lambda =0.
\]
Thus we conclude that in order  for $\dim L(\lambda)< 4(\ell+1)$, the necessary and sufficient condition is
\begin{eqnarray}\label{eq:atypical}
(q \lambda_1\lambda_2 - q^{-1} \lambda_1^{-1}\lambda_2^{-1})(\lambda_2 - \lambda_2^{-1}) =0.
\end{eqnarray}
This completes the proof  of the first statement of the lemma.   The second statement  easily follows.
\end{proof}
Call \eqref{eq:atypical} the atypicality condition by adopting the terminology from the representation theory of Lie superalgebras.

\begin{lemma}\label{lem:atypical}
Continue to assume that $\lambda_1=\pm q^{\ell}$ for some nonnegative integer $\ell$, and
 assume that the atypicality condition \eqref{eq:atypical} holds. Then $\lambda$ belongs to one of the following mutually exclusive cases: (a)  $\lambda_2=\pm 1$, or
(b) $\lambda_2 = \pm q^{-1}\lambda_1^{-1}$.

 If $\lambda$ belongs to case (a), $L(\lambda)$ is $2\ell +1$  dimensional with a basis
\begin{eqnarray}\label{eq:basis-1}
\{f_1^{j}v_\lambda,  F f_1^k v_\lambda \mid 0\le j\le \ell,  \  0\le k \le \ell-1\}.
\end{eqnarray}

If $\lambda$ belongs to case (b), $L(\lambda)$ is $2(\ell +1)+1$  dimensional with a basis
\begin{eqnarray}\label{eq:basis-2}
\{f_1^k v_\lambda,  f_2 f_1^k v_\lambda, F f_1^\ell v_\lambda \mid 0\le k\le \ell\}.
\end{eqnarray}
\end{lemma}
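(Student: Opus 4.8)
The plan is to treat the two factors in the atypicality condition \eqref{eq:atypical} separately and, in each case, to restrict $L(\lambda)$ to the subalgebra $\U_1:=\langle e_1,f_1,k_1^{\pm1}\rangle\cong\U_q(\fsl_2)$ and decompose it there. First the dichotomy: \eqref{eq:atypical} holds precisely when one factor vanishes, and $\lambda_2-\lambda_2^{-1}=0$ gives $\lambda_2=\pm1$ (case (a)), while $q\lambda_1\lambda_2-q^{-1}\lambda_1^{-1}\lambda_2^{-1}=0$ gives $\lambda_2^2=q^{-2}\lambda_1^{-2}$, i.e. $\lambda_2=\pm q^{-1}\lambda_1^{-1}$ (case (b)). Since $\lambda_1=\pm q^\ell$, the latter forces $\lambda_2^2=q^{-2-2\ell}\ne1$ because $q$ is an indeterminate, so the two cases are mutually exclusive. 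By \thmref{thm:fd-irreps} we already know $L(\lambda)$ is finite dimensional, hence completely reducible over $\U_1$; moreover $v_\lambda$ generates the $(\ell+1)$-dimensional irreducible $\U_1$-module $M=\mathrm{span}\{f_1^kv_\lambda:0\le k\le\ell\}$, lying in the $k_2$-weight sector with eigenvalues $\lambda_2q^k$. Throughout I will use the identities of \lemref{lem:ff} (notably $Ff_1=qf_1F$, $f_2F=-q^{-1}Ff_2$, $F^2=0$ and \eqref{eq:ff-3}), together with $Ff_2v_\lambda=0$ from \eqref{eq:bottom} and the relation $\bigl(\tfrac{\lambda_2-\lambda_2^{-1}}{q-q^{-1}}F+q\lambda_2f_1f_2\bigr)v_\lambda=0$ obtained in the proof of \lemref{lem:typical}, both valid under \eqref{eq:atypical}.

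In case (a) I would first show $f_2v_\lambda=0$: it is annihilated by $e_1$ (which commutes with $f_2$) and by $e_2$ (since $e_2f_2v_\lambda=\tfrac{\lambda_2-\lambda_2^{-1}}{q-q^{-1}}v_\lambda=0$), so it is a singular vector of strictly lower weight and must vanish by simplicity of $L(\lambda)$. Consequently $Fv_\lambda=-qf_2f_1v_\lambda$ is $e_1$-highest (by \eqref{eq:ff-2}) of $k_1$-weight $\lambda_1q^{-1}=\pm q^{\ell-1}$, and is nonzero for $\ell\ge1$ because $e_2f_2f_1v_\lambda=\lambda_2f_1v_\lambda\ne0$; hence it generates an $\ell$-dimensional irreducible $\U_1$-module $N=\mathrm{span}\{Ff_1^kv_\lambda:0\le k\le\ell-1\}$ in the opposite ($-\lambda_2$) weight sector, and in particular $Ff_1^\ell v_\lambda=0$. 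A direct check with \eqref{eq:ff-3} then gives $f_2M\subseteq N$, $e_2M=0$, $e_2N\subseteq M$ and $f_2N=0$, so $M+N$ is a $\U_q(\fsl_{2,1})$-submodule containing $v_\lambda$ and therefore equals $L(\lambda)$. As $M$ and $N$ sit in opposite $k_2$-sectors the sum is direct, yielding $\dim L(\lambda)=2\ell+1$ and the basis \eqref{eq:basis-1}.

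In case (b) one has $\lambda_2\ne\pm1$, so $e_2f_2v_\lambda\ne0$ and $f_2v_\lambda\ne0$; being $e_1$-highest of $k_1$-weight $q\lambda_1=\pm q^{\ell+1}$ it generates an $(\ell+2)$-dimensional irreducible $\U_1$-module $M'=\mathrm{span}\{f_1^jf_2v_\lambda:0\le j\le\ell+1\}$, again in the $-\lambda_2$ sector. The relation of \lemref{lem:typical} now reads $Fv_\lambda=c\,f_1f_2v_\lambda$ with $c\ne0$, so $Ff_1^kv_\lambda=cq^kf_1^{k+1}f_2v_\lambda\in M'$; together with \eqref{eq:ff-3} this gives $f_2M\subseteq M'$, while $f_2^2=0$, $Ff_1=qf_1F$ and $Ff_2v_\lambda=0$ give $f_2M'=0$, and one has $e_2M=0$ and $e_2f_1^jf_2v_\lambda=\tfrac{\lambda_2-\lambda_2^{-1}}{q-q^{-1}}f_1^jv_\lambda\in M$. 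Hence $M+M'$ is a submodule equal to $L(\lambda)$, the sum is direct, and $\dim L(\lambda)=2\ell+3$. To reach the stated basis I would observe that $\{f_2f_1^kv_\lambda:0\le k\le\ell\}\cup\{Ff_1^\ell v_\lambda\}$ are $\ell+2$ vectors of $M'$ with pairwise distinct $k_1$-weights, each nonzero — using $e_2f_2f_1^kv_\lambda=\tfrac{\lambda_2q^k-\lambda_2^{-1}q^{-k}}{q-q^{-1}}f_1^kv_\lambda\ne0$ for $0\le k\le\ell$ and $Ff_1^\ell v_\lambda=cq^\ell f_1^{\ell+1}f_2v_\lambda\ne0$ — hence a basis of $M'$; adjoining $\{f_1^kv_\lambda\}$ gives \eqref{eq:basis-2}.

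The main obstacle is case (b): one must extract from \eqref{eq:atypical} the single extra identity $Fv_\lambda\propto f_1f_2v_\lambda$ that breaks the circular dependence among $f_1^jf_2v_\lambda$, $f_2f_1^jv_\lambda$ and $Ff_1^{j-1}v_\lambda$ occupying a common weight space, and then verify that $M+M'$ is genuinely closed under $e_2$ and $f_2$ rather than merely possessing the correct spanning set. Pinning down the exact dimensions (not just upper bounds) through the nonvanishing statements is the secondary point requiring care; everything else reduces to routine bookkeeping with \lemref{lem:ff}.
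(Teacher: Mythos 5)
Your proof is correct, and it uses the same key ingredients as the paper's: the identities of Lemma \ref{lem:ff}, the atypicality consequences $Ff_2v_\lambda=0$ and $\bigl(\tfrac{\lambda_2-\lambda_2^{-1}}{q-q^{-1}}F+q\lambda_2f_1f_2\bigr)v_\lambda=0$ carried over from the proof of Lemma \ref{lem:typical}, the recognition of $Fv_\lambda$ (case (a)), resp.\ $f_2v_\lambda$ (case (b)), as a $\U_q(\fsl_2)$-highest weight vector, and separation of $k_2$-eigenvalues for linear independence. Where you genuinely differ is in how completeness (spanning) is established. The paper starts from the spanning set \eqref{eq:span-set} inherited from Lemma \ref{lem:typical} and collapses it via the inductively proven proportionalities \eqref{eq:ffv-1} and \eqref{eq:ffv-2}, finishing case (b) with an explicit $e_1$-computation showing $Ff_1^\ell v_\lambda=f_1f_2f_1^\ell v_\lambda\ne0$; you instead prove spanning from scratch by checking that $M+N$ (resp.\ $M+M'$) is closed under $e_2$ and $f_2$, hence is a submodule containing the generator $v_\lambda$. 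Your route buys several things: it does not rely on the spanning set of Lemma \ref{lem:typical}; it treats $\ell=0$ uniformly in case (b), which the paper handles as a separate subcase; it replaces the inductions behind \eqref{eq:ffv-1} and \eqref{eq:ffv-2} by one-line consequences of \eqref{eq:ff-3} applied to $v_\lambda$; and in case (b) the nonvanishing of $Ff_1^\ell v_\lambda$ comes for free from the irreducibility of the $(\ell+2)$-dimensional $\U_q(\fsl_2)$-module $M'$ generated by $f_2v_\lambda$, rather than from a direct computation. The cost is the extra bookkeeping of the closure checks on each summand, which the paper's shorter argument avoids; also note your appeal to complete reducibility over $\U_1$ is not actually needed — finite-dimensionality alone forces a highest weight $\U_q(\fsl_2)$-module to be simple of the predicted dimension, which is all your argument uses.
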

\begin{proof}
The first statement is clear.

To describe  the structure of $L(\lambda)$,  we recall from the proof of Lemma \ref{lem:typical} that $F f_2 f_1^k v_\lambda=0$ for all $k\ge 0$.  What we need to sort out is the linear dependence of the vectors $F f_1^k v_\lambda$ and $f_2 f_1^k v_\lambda$.  We consider the two cases separately.

In case (a), we have $f_2 v_\lambda=0$. This leads to
$
f_1 f_2 v_\lambda = F v_\lambda + q f_2 f_1 v_\lambda=0,
$
that is,
\[
F v_\lambda = - q f_2 f_1 v_\lambda.
\]
More generally, we have the following relation for all $k\ge 0$,
\begin{eqnarray}\label{eq:ffv-1}
Ff_1^k v_\lambda= - \frac{q^{k+1}}{[k+1]_q}f_2 f_1^{k+1} v_\lambda, \quad k\ge 0,
\end{eqnarray}
which we prove by induction on $k$.  Note that the first relation in \eqref{eq:ff-1} and the definition of $F$ respectively lead to
$f_1Ff_1^{k-1} = q^{-1}F f_1$ and $f_1 f_2 f_1^k = F  f_1^k + q f_2 f_1^{k+1}$.  Assuming
\eqref{eq:ffv-1} holds for $k-1$, we have
\[
q^{-1} F f_1^k v_\lambda= - \frac{q^k}{[k]_q} ( F + q f_2 f_1) f_1^k v_\lambda.
\]
Moving the term $- \frac{q^k}{[k]_q} F f_1^k v_\lambda$ on the right hand side to the left and using $q^{-1} [k]_1 + q^k= [k+1]_q$,  we obtain \eqref{eq:ffv-1}.

The relation \eqref{eq:ffv-1} shows that the vectors in \eqref{eq:basis-1} span $L(\lambda)$.

If we also have $\ell=0$, then $f_1v_\lambda=0$ and hence $Fv_\lambda=0$.  If $\ell>0$, we have
$e_2 F v_\lambda= - q \frac{k_2-k_2^{-1}}{q-q^{-1}}f_1v_\lambda=  - q f_1v_\lambda\ne0$, and
$e_1 Fv_\lambda = -q f_2 e_1 f_1 v_\lambda=-q[\ell]_q f_2 v_\lambda=0$.  Furthermore, $k_1 F v_\lambda = q^{\ell-1} F v_\lambda$. Hence $F v_\lambda$ is a highest weight vector of the $\U_q(\fsl_2)$ subalgebra with highest weight $\pm q^{\ell-1}$, which generates the $\ell$-dimensional simple $\U_q(\fsl_2)$-submodule with  basis vectors $F f_1^k v_\lambda$ for $k=0, 1, \dots, \ell-1$.

This in particular shows that the set  $B^1:=\{F f_1^k v_\lambda\mid 0\le k\le \ell-1\}$ is linearly independent.  It is  clear that $B^0:=\{f_1^k v_\lambda\mid 0\le k\le \ell\}$ is also linearly independent. The $k_2$ eigenvalues of the vectors in $B^1$ are the negatives of those in
$B^0\backslash\{v_\lambda\}$, hence the two sets are linearly independent.
This proves that \eqref{eq:basis-1} is a basis of $L(\lambda)$.

Now we consider case (b).

If $\ell=0$, it is easy to see that $L(\lambda)$ has a basis $\{ v_\lambda,  f_2v_\lambda,  F v_\lambda=f_1 f_2 v_\lambda\}$.

Assume $\ell>0$. Then $e_2 F f_2 v_\lambda=0$ leads to
$
[\ell]_q F v_\lambda = q^{-\ell} f_2 f_1 v_\lambda.
$
A similar inductive proof as that for \eqref{eq:ffv-1} shows that
\begin{eqnarray}\label{eq:ffv-2}
[\ell-k]_q F f_1^k v_\lambda = q^{k-\ell} f_2 f_1^{k+1} v_\lambda, \quad k=0, 1, \dots, \ell-1,
\end{eqnarray}
where the vector on the right hand side is clearly nonzero.
In particular, for $k=\ell-1$, we have $Ff_1^{\ell-1} v_\lambda = q^{-1} f_2 f_1^\ell v_\lambda$.
This leads to
$
Ff_1^\ell v_\lambda = f_1 f_2 f_1^\ell v_\lambda.
$
The right hand side is nonzero, as
\[
\begin{aligned}
e_1 f_1 f_2 f_1^\ell v_\lambda
	&= \pm([\ell-1]_q f_2 f_1^\ell + [\ell]_q f_1 f_2 f_1^{\ell-1})v_\lambda \\
	&=\pm([\ell-1]_q f_2 f_1^\ell + [\ell]_q F f_1^{\ell-1} +[\ell]_q  q f_2 f_1^\ell)v_\lambda \\
	&= \pm([\ell-1]_q  +  (q+q^{-1}) [\ell]_q)f_2 f_1^\ell v_\lambda\ne 0.
\end{aligned}
\]

We have now proved that the vectors in \eqref{eq:basis-2} are all nonzero and span $L(\lambda)$.
They are linearly independent by similar weight considerations as in case (a).

This completes the proof of the lemma.
\end{proof}

\section{Invariants of the degenerate quantum general linear group}
\subsection{Some invariant theory}\label{sect:inv}
Recall that the antipode $S$ of $\U_q(\gl_{m, n})$ is an algebraic anti-automorphism, thus its square is an automorphism. It satisfies
\[
S^2(K_b)= K_b, \quad S^2(e_a)= k_a e_a k_a^{-1}, \quad  S^2(f_a)= k_a f_a k_a^{-1}, \quad \forall a\in I', \ b\in I.
\]
\begin{lemma} The square of the antipode  of $\U_q(\gl_{m, n})$ is an inner automorphism, namely, there exists an invertible element $K_{2\rho}\in \U_q(\gl_{m, n})$ such that
\begin{eqnarray}\label{eq:S-sq}
S^2(x) = K_{2\rho} x K_{2\rho}^{-1}, \quad \forall x\in \U_q(\gl_{m,n}).
\end{eqnarray}
Such a $K_{2\rho}$ can be constructed as follows:
\begin{eqnarray}
K_{2\rho} = \left\{
\begin{array}{l l}
K'_{2\rho}, & \text{if $m+n$ is even}, \\
K'_{2\rho} K', & \text{otherwise},
\end{array}
\right.
\end{eqnarray}
with
$
K'_{2\rho} = \prod_{a=1}^mK_a^{m-n+1-2a}\prod_{\mu=1}^n K_{m+\mu}^{m+n+1-2\mu}
$
and
$
K' =  \prod_{a=1}^m K_a\prod_{\mu=1}^n K_{m+\mu}^{-1}.
$
\end{lemma}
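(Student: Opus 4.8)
The plan is to use that both $S^2$ and conjugation by the invertible element $K_{2\rho}$ are algebra automorphisms of $\U_q(\gl_{m,n})$, so that it suffices to verify \eqref{eq:S-sq} on the generators $K_b$, $e_a$, $f_a$. For the $K_b$ there is nothing to do: $K_{2\rho}$ is a Laurent monomial in the mutually commuting generators $K_c$ (by \eqref{eq:gl-1}), hence $K_{2\rho}K_bK_{2\rho}^{-1}=K_b=S^2(K_b)$.

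Next I would treat the $e_a$ with $K_{2\rho}$ replaced first by $K'_{2\rho}=\prod_{c}K_c^{n_c}$, reading off the exponents $n_a=m-n+1-2a$ for $1\le a\le m$ and $n_{m+\mu}=m+n+1-2\mu$ for $1\le\mu\le n$. Applying \eqref{eq:gl-2} repeatedly gives $K'_{2\rho}e_aK'_{2\rho}{}^{-1}=q_a^{\,n_a}q_{a+1}^{-n_{a+1}}e_a$, while $S^2(e_a)=k_ae_ak_a^{-1}=q_aq_{a+1}e_a$ follows directly from \eqref{eq:gl-2} with $k_a=K_aK_{a+1}^{-1}$. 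Thus everything reduces to the numerical identity $q_a^{\,n_a}q_{a+1}^{-n_{a+1}}=q_aq_{a+1}$ for each $a\in I'$.

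I would check this by cases. For $a<m$ and for $a>m$ one has $q_a=q_{a+1}$ and $n_{a+1}=n_a-2$, whence the left side equals $q_a^{2}=q_aq_{a+1}$, independently of the parity of $m+n$. The delicate case is $a=m$: here $q_m=q$, $q_{m+1}=p=-q^{-1}$, $n_m=1-m-n$ and $n_{m+1}=m+n-1$, and substituting $p=-q^{-1}$ collapses $q^{n_m}p^{-n_{m+1}}$ to $(-1)^{1-m-n}$, to be compared with $q_mq_{m+1}=qp=-1$. These coincide exactly when $m+n$ is even, accounting for the first branch of the definition. When $m+n$ is odd they differ by a sign, and this is repaired by $K'=\prod_{a=1}^mK_a\prod_{\mu=1}^nK_{m+\mu}^{-1}$: a one-line computation with \eqref{eq:gl-2} shows that conjugation by $K'$ multiplies $e_m$ by $qp=-1$ and fixes every other $e_a$ (its exponents being constant on the two blocks $c\le m$ and $c>m$), so $K_{2\rho}=K'_{2\rho}K'$ restores the correct scalar $-1$ on $e_m$ without disturbing the rest.

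Finally, the $f_a$ require no separate computation: comparing \eqref{eq:gl-3} with \eqref{eq:gl-2} shows that the scalar condition for $f_a$ is the inverse of that for $e_a$, so it holds automatically once the $e_a$ case is settled. The only genuine obstacle is the sign bookkeeping in the case $a=m$, where the mixed product $q_mq_{m+1}=qp=-1$ together with the appearance of $p=-q^{-1}$ in the exponent makes the parity of $m+n$ intervene; all remaining steps are routine manipulations with the commutation relations \eqref{eq:gl-2} and \eqref{eq:gl-3}.
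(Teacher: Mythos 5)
Your proof is correct and follows essentially the same route as the paper's: both reduce \eqref{eq:S-sq} to a scalar check on the generators (using that $S^2$ and conjugation are algebra automorphisms), split into the cases $a\ne m$ and $a=m$, and use the parity of $m+n$ at $a=m$ to account for the two branches in the definition of $K_{2\rho}$. The only cosmetic differences are that the paper packages the $a=m$ computation as conjugation by $k_m^{1-m-n}$ (exploiting that the exponents of $K_m$ and $K_{m+1}$ in $K'_{2\rho}$ are negatives of each other), whereas you do the raw exponent bookkeeping, and that you dispatch the $f_a$ case by the inverse-scalar symmetry between \eqref{eq:gl-2} and \eqref{eq:gl-3} rather than repeating the computation.
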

\begin{proof} Since $S^2$ is an algebra automorphism,  we only need to prove that with the $K_{2\rho}$ constructed,  equation \eqref{eq:S-sq} for all the generators of $\U_q(\gl_{m,n})$.
It is obvious that \eqref{eq:S-sq} holds for all $K_b$.  We can also easily prove
that \eqref{eq:S-sq} holds for all $e_a$ and $f_a$ with $a\ne m$ by noting that
\[
\begin{aligned}
&K'_{2\rho} e_a {K'}_{2\rho}^{-1} = k_a e_a k_a^{-1},  \quad K'_{2\rho} f_a {K'}_{2\rho}^{-1} = k_a f_a k_a^{-1}, \\
&K' e_a {K'}^{-1} = e_a,  \quad K' f_a {K'}^{-1} = f_a, \quad a\ne m.
\end{aligned}
\]
To consider $e_m$ and $f_m$, we note that
\[
\begin{aligned}
&K'_{2\rho} e_m {K'}_{2\rho}^{-1} = k_m^{1-m-n} e_m k_m^{-1+m+n}= (-1)^{m+n+1} e_m,  \\
&K'_{2\rho} f_m {K'}_{2\rho}^{-1} = k_m^{1-m-n} f_m k_m^{-1+m+n}= (-1)^{m+n+1} f_m,  \\
&K' e_m {K'}^{-1} = k_m e_m k_m^{-1} = - e_m,  \\
& K' f_m {K'}^{-1} =k_m f_m k_m^{-1}=- f_m.
\end{aligned}
\]
Hence $K_{2\rho} e_m K_{2\rho}^{-1} = - e_m = k_m e_m k_m^{-1}$,  and $
 K_{2\rho} f_m K_{2\rho}^{-1} = - f_m=k_m f_m k_m^{-1}.$
This completes the proof of the lemma.
\end{proof}

Let $M$ be a finite dimensional $\U_q(\gl_{m, n})$-module,  and denote the associated representation by $\pi: \U_q(\gl_{m, n}) \longrightarrow \End_{\C(q)}(M)$.  We have the following quantum adjoint action of $\U_q(\gl_{m, n})$ on $\End_{\C(q)}(M)$ 
\[
\begin{aligned}
&\U_q(\gl_{m, n}) \otimes \End_{\C(q)}(M) \longrightarrow \End_{\C(q)}(M), \\
&x\otimes A\mapsto ad_x(A):=\sum_{(x)} \pi(x_{(1)}) A \pi(S(x_{(2)})),
\end{aligned}
\]
where we have used Sweedler's notation $\Delta(x)=\sum_{(x)} x_{(1)}\otimes S(x_{(2)})$ for the co-multiplication.  We denote
 the endomorphism algebra of $M$ over $\U_q(\fgl_{m, n})$ by
\[
\End_{\U_q(\fgl_{m, n})}(M):=\{A\in  \End_{\C(q)}(M)\mid \pi(x) A - A\pi(x) =0, \ \forall x\in \U_q(\fgl_{m, n})\}.
\]

Define the quantum trace on $\End_{\C(q)}(M)$ by
\begin{eqnarray}\label{eq:q-trace}
\tau_{M}: \End_{\C(q)}(M)\longrightarrow\C(q), \quad A\mapsto tr(\pi(K_{2\rho})A),
\end{eqnarray}
where $tr$ is the trace over $M$.

The following lemma follows from simple facts in the theory of Hopf algebras. 
\begin{lemma}\label{lem:q-trace}
Keep notation above. 
\begin{enumerate}
\item An element $A$ of $\End_{\C(q)}(M)$ belongs to $\End_{\U_q(\fgl_{m, n})}(M)$ if and ony if 
\[ 
ad_x(A) =\epsilon(x) A, \quad \forall x\in \U_q(\fgl_{m, n}).
\]
\item  The quantum trace $\tau_M$ is $ad$-invariant in the sense that
\[
\tau_M(ad_x(A)) = \epsilon(x) \tau_M(A), \quad \forall x\in\U_q(\gl_{m, n}), \ A\in \End_{\C(q)}(M).
\]
\end{enumerate}
\end{lemma}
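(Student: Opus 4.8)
The plan is to derive both statements formally from the Hopf algebra axioms, using as the only nontrivial input the inner-automorphism property of $K_{2\rho}$ established in the preceding lemma; since $\U_q(\gl_{m,n})$ is an ordinary (not super) Hopf algebra, no sign factors intrude. Throughout I write $\Delta(x)=\sum_{(x)} x_{(1)}\otimes x_{(2)}$ in Sweedler notation and use freely the antipode axioms $\sum_{(x)} x_{(1)} S(x_{(2)}) = \sum_{(x)} S(x_{(1)}) x_{(2)} = \epsilon(x)\,1$, the counit axiom $\sum_{(x)} \epsilon(x_{(1)}) x_{(2)} = x$, and coassociativity.

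For part (1), I would prove both implications directly. The forward direction is immediate: if $\pi(y) A = A\,\pi(y)$ for all $y$, then sliding $A$ to the left in $ad_x(A)=\sum_{(x)}\pi(x_{(1)}) A\,\pi(S(x_{(2)}))$ and collapsing $\sum_{(x)}\pi(x_{(1)} S(x_{(2)}))=\epsilon(x)\,\id_M$ via the antipode axiom gives $ad_x(A)=\epsilon(x) A$. For the converse the sole nontrivial ingredient is the reconstruction identity $\sum_{(y)} ad_{y_{(1)}}(A)\,\pi(y_{(2)}) = \pi(y) A$, which follows from coassociativity together with the antipode axiom applied to the factor $\sum S(y_{(2)}) y_{(3)}$. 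Granting the hypothesis $ad_{y_{(1)}}(A)=\epsilon(y_{(1)}) A$, this identity yields $\pi(y) A = A\,\pi\!\bigl(\sum_{(y)}\epsilon(y_{(1)}) y_{(2)}\bigr)=A\,\pi(y)$, so $A\in\End_{\U_q(\gl_{m,n})}(M)$.

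For part (2) I would compute $\tau_M(ad_x(A))$ head on. Expanding the quantum trace and using linearity and cyclicity of $\tr$,
\[
\tau_M(ad_x(A)) = \sum_{(x)}\tr\!\bigl(\pi(K_{2\rho} x_{(1)})\,A\,\pi(S(x_{(2)}))\bigr) = \sum_{(x)}\tr\!\bigl(\pi\bigl(S(x_{(2)})\,K_{2\rho}\,x_{(1)}\bigr)\,A\bigr).
\]
Everything then reduces to the scalar identity
\[
\sum_{(x)} S(x_{(2)})\,K_{2\rho}\,x_{(1)} = \epsilon(x)\,K_{2\rho}.
\]
To prove it I would invoke the previous lemma to replace $K_{2\rho} x_{(1)}$ by $S^2(x_{(1)}) K_{2\rho}$, and then use that $S$ is an anti-automorphism to write $\sum_{(x)} S(x_{(2)}) S^2(x_{(1)}) = S\!\bigl(\sum_{(x)} S(x_{(1)}) x_{(2)}\bigr) = S(\epsilon(x)\,1)=\epsilon(x)\,1$. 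Substituting back gives $\tau_M(ad_x(A)) = \epsilon(x)\,\tr(\pi(K_{2\rho}) A)=\epsilon(x)\,\tau_M(A)$.

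There is no substantial obstacle: the computations are entirely routine, and the only genuine content is the inner-automorphism property of $K_{2\rho}$ — indeed, that property is precisely what forces $K_{2\rho}$, rather than the identity, to appear in $\tau_M$ so as to render it $ad$-invariant. The only care required is to track the order of the factors in the Sweedler sums and to apply the correct one of the two antipode axioms at the anti-automorphism step.
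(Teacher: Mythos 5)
Your proposal is correct and follows essentially the same route as the paper: part (1) via the reconstruction identity $\pi(x)A=\sum_{(x)} ad_{x_{(1)}}(A)\,\pi(x_{(2)})$, and part (2) via the three ingredients of cyclicity of the trace, the inner-automorphism property $S^2(x)=K_{2\rho}xK_{2\rho}^{-1}$, and the anti-homomorphism property of $S$ combined with the antipode axiom $\sum_{(x)}S(x_{(1)})x_{(2)}=\epsilon(x)1$. The only cosmetic difference is that you apply cyclicity before the inner-automorphism substitution and package the conclusion as the scalar identity $\sum_{(x)}S(x_{(2)})K_{2\rho}x_{(1)}=\epsilon(x)K_{2\rho}$, whereas the paper performs the same steps in the opposite order.
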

\begin{proof}  The proof of the lemma is simple but worth knowning.  

Consider the first statement. 
 If $A\in\End_{\U_q(\fgl_{m, n})}(M)$,  we have
\[
\begin{aligned}
ad_x(A)=\sum_{(x)} \pi(x_{(1)}) A \pi(S(x_{(2)}))&= \sum_{(x)} \pi(x_{(1)})\pi(S(x_{(2)})) A = \epsilon(x) A,
\end{aligned}
\]
for all $x\in\U_q(\gl_{m,n})$.  
To prove the opposite direction, we note that for any $A\in \End_{\C(q)}(M)$, 
 the defining properties of the antipode leads to   
\[
\pi(x) A=\sum_{(x)} ad_{x_{(1)}}(A) \pi(x_{(2)}), \quad \forall x\in \U_q(\fgl_{m, n}).
\]
If $ad_x(A)=\epsilon(x)A$ for $x\in \U_q(\fgl_{m, n})$, then 
\[
\sum_{(x)} ad_{x_{(1)}}(A) \pi(x_{(2)})=\sum_{(x)} \epsilon(x_{(1)})A \pi(x_{(2)}) =A \pi(x). 
\]
Hence $ \pi(x) A- A \pi(x)=0$, and $A\in\End_{\U_q(\fgl_{m, n})}(M)$. 
This proves the first statement. 

To prove the second statement, we note that 
\[
\begin{aligned}
\tau_M(ad_x(A)) &= \sum_{(x)}tr(\pi(K_{2\rho})\pi(x_{(1)}) A \pi(S(x_{(2)})))\\
&= \sum_{(x)}tr( \pi(S^2(x_{(1)})) \pi(K_{2\rho})A \pi(S(x_{(2)}))). \qquad \text{(by \eqref{eq:S-sq})}
\end{aligned}
\]
Using the cyclic property of the trace,  we can rewrite the right hand side as
\[\sum_{(x)}tr(\pi(K_{2\rho})A \pi(S(x_{(2)}) \pi(S^2(x_{(1)})) ).\]
Since $S$ is an anti-automorphism, this can be rewritten as
\[
 \sum_{(x)}tr(\pi(K_{2\rho})A \pi(S(S(x_{(1)})x_{(2)}))),
\]
which is equal to
$
\epsilon(x) tr(\pi(K_{2\rho})A)=\epsilon(x) \tau_M(A)
$
by the defining property \eqref{eq:S-def} of $S$.  
This completes the proof.
\end{proof}

Denote by $\id_M$ the identity map on the $\U_q(\gl_{m, n})$-module $M$. Let
\[
\dim_q(M)=\tau_M(\id_M),
\]
and call it the quantum dimension of $M$.  It is always well defined for finite dimensional $\U_q(\gl_{m, n})$-modules.

\begin{example} The quantum dimension of $V=\C(q)^{m+n}$ is given by
\[
\dim_q(V)=\left\{
\begin{array}{l l}
[m-n]_q, &\text{if $m+n$ is even}, \\
q [m-n]_q, &\text{otherwise}.
\end{array}
\right.
\]
\end{example}

Let $V_1$ and $V_2$ be $\U_q(\gl_{m, n})$-modules, and denote by
$
\pi_i: \U_q(\gl_{m, n}) \longrightarrow \End_{\C(q)}(V_i)
$
($i=1, 2$)
the corresponding representations respectively.   Then $V_1\otimes V_2$ forms a
$\U_q(\gl_{m, n})$-module with the associated representation
$
(\pi_1\otimes\pi_2)\Delta: \U_q(\gl_{m, n})\longrightarrow \End_{\C(q)}(V_1\otimes V_2).
$
	
The following result can be deduced from \cite[Proposition1]{ZGB1}.
\begin{theorem}\label{thm:inv}
Keep notation above. Define the linear map
\[
 \Phi:  \End_{\C(q)}(V_1\otimes V_2)\longrightarrow \End_{\C(q)}(V_1), \quad
\Gamma\mapsto(\id\otimes \tau_{V_2})(\Gamma),
\]
where $\id$  is the identity map on $\End_{\C(q)}(V_1)$.
If $\Gamma\in\End_{\U_q(\gl_{m, n})}(V_1\otimes V_2)$,  then $\Phi(\Gamma)\in\End_{\U_q(\gl_{m, n})}(V_1)$.
\end{theorem}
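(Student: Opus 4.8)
The plan is to reduce the statement to the two parts of Lemma~\ref{lem:q-trace} together with a single intertwining identity relating the quantum adjoint action on $V_1\otimes V_2$ to the one on $V_1$. Writing $ad^{(1)}$ and $ad^{(12)}$ for the adjoint actions on $\End_{\C(q)}(V_1)$ and $\End_{\C(q)}(V_1\otimes V_2)$ respectively, I would first prove that for \emph{every} $\Gamma\in\End_{\C(q)}(V_1\otimes V_2)$ and every $x\in\U_q(\gl_{m,n})$,
\[
\Phi\big(ad^{(12)}_x(\Gamma)\big)=ad^{(1)}_x\big(\Phi(\Gamma)\big).
\]
Granting this, the theorem is immediate: by part (1) of Lemma~\ref{lem:q-trace}, $\Gamma\in\End_{\U_q(\gl_{m,n})}(V_1\otimes V_2)$ means $ad^{(12)}_x(\Gamma)=\epsilon(x)\Gamma$ for all $x$; applying $\Phi$ and using the identity gives $ad^{(1)}_x(\Phi(\Gamma))=\epsilon(x)\Phi(\Gamma)$, whence $\Phi(\Gamma)\in\End_{\U_q(\gl_{m,n})}(V_1)$ by part (1) again.

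To prove the intertwining identity I would expand its left-hand side. Since the module structure on $V_1\otimes V_2$ is given by $(\pi_1\otimes\pi_2)\Delta$, using coassociativity and the standard identity $\Delta S=(S\otimes S)\Delta^{\mathrm{op}}$ one finds
\[
ad^{(12)}_x(\Gamma)=\sum_{(x)}\big(\pi_1(x_{(1)})\otimes\pi_2(x_{(2)})\big)\,\Gamma\,\big(\pi_1(S(x_{(4)}))\otimes\pi_2(S(x_{(3)}))\big),
\]
where $\sum_{(x)}x_{(1)}\otimes x_{(2)}\otimes x_{(3)}\otimes x_{(4)}=\Delta^{(3)}(x)$. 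Writing $\Gamma=\sum_i A_i\otimes B_i$ with $A_i\in\End_{\C(q)}(V_1)$ and $B_i\in\End_{\C(q)}(V_2)$, and applying $\Phi=\id\otimes\tau_{V_2}$, the $V_2$-factor contributes the scalar $\tr\big(\pi_2(K_{2\rho})\pi_2(x_{(2)})B_i\pi_2(S(x_{(3)}))\big)$. Exactly as in the proof of Lemma~\ref{lem:q-trace}(2), I would push $\pi_2(K_{2\rho})$ through $\pi_2(x_{(2)})$ via the relation $K_{2\rho}x=S^2(x)K_{2\rho}$ from \eqref{eq:S-sq} and use cyclicity of $\tr$ over $V_2$ to rewrite this scalar as $\tr\big(\pi_2(K_{2\rho})B_i\,\pi_2(S(S(x_{(2)})x_{(3)}))\big)$.

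The decisive step is then the telescoping identity
\[
\sum_{(x)}x_{(1)}\otimes S(x_{(2)})x_{(3)}\otimes x_{(4)}=\sum_{(x)}x_{(1)}\otimes 1\otimes x_{(2)},
\]
which follows by regrouping $\Delta^{(3)}$ as $(\id\otimes\Delta\otimes\id)\Delta^{(2)}$, applying the antipode axiom to the two inner legs, and collapsing the resulting counit. Feeding this back in replaces $S(x_{(2)})x_{(3)}$ by $1$, turning the $V_2$-scalar into $\tau_{V_2}(B_i)$ and leaving exactly $\sum_i\sum_{(x)}\pi_1(x_{(1)})A_i\pi_1(S(x_{(2)}))\,\tau_{V_2}(B_i)=ad^{(1)}_x(\Phi(\Gamma))$, as required. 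I expect the main obstacle to be the bookkeeping: because the trace is over $V_2$ only, cyclicity may be used solely on the $V_2$-legs, so one must verify that the surviving $V_1$-legs $x_{(1)},x_{(4)}$ reorganise correctly under the telescoping. This is precisely where coassociativity and the counit axiom conspire to decouple the middle factor while reassembling the outer legs into a genuine coproduct $\Delta(x)$.
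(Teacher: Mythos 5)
Your proposal is correct and follows essentially the same route as the paper: the paper likewise expands the adjoint action on $V_1\otimes V_2$ in four-fold Sweedler notation, applies $\Phi=\id\otimes\tau_{V_2}$, and collapses the middle legs $x_{(2)},x_{(3)}$ via the $ad$-invariance of the quantum trace together with the counit axiom, arriving at $ad_x(\Phi(\Gamma))=\epsilon(x)\Phi(\Gamma)$ and concluding by Lemma~\ref{lem:q-trace}(1). The only cosmetic difference is that you re-derive the trace invariance inline (your telescoping identity is precisely the antipode-axiom step in the paper's proof of Lemma~\ref{lem:q-trace}(2)), whereas the paper simply cites that lemma.
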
	
\begin{proof} Note that $\Gamma\in\End_{\C(q)}(V_1\otimes V_2)$ belongs to $\End_{\U_q(\gl_{m, n})}(V_1\otimes V_2)$ if and only if
\begin{eqnarray}\label{eq:ad-2}
\sum_{(x)} (\pi_1\otimes\pi_2)\Delta(x_{(1)})\Gamma (\pi_1\otimes\pi_2)\Delta(S(x_{(2)}))=\epsilon(x) \Gamma, \quad \forall x\in\U_q(\gl_{m, n}).
\end{eqnarray}
The left hand side can be rewritten as
\[
\sum_{(x)} \pi_1(x_{(1)})\otimes\pi_2(x_{(2)})\Gamma \pi_1(S(x_{(4)}))\otimes
\pi_2(S(x_{(3)})).
\]
Applying $\Phi$ to it and using Lemma \ref{lem:q-trace}, we obtain
\[
\begin{aligned}
\sum_{(x)} \pi_1(x_{(1)})\epsilon(x_{(2)})\Phi(\Gamma) \pi_1(S(x_{(3)}))
=\sum_{(x)} \pi_1(x_{(1)})\Phi(\Gamma) \pi_1(S(x_{(2)})) = ad_x(\Phi(\Gamma)).
\end{aligned}
\]
Hence $\Phi$ maps \eqref{eq:ad-2} to
\[
ad_x(\Phi(\Gamma)) = \epsilon(x)\Phi(\Gamma), \quad \forall x\in\U_q(\gl_{m, n}).
\]
This implies that $\Phi(\Gamma)\in\End_{\U_q(\gl_{m, n})}(V_1)$.
\end{proof}

\begin{remark}
Lemma \ref{eq:q-trace} and Theorem \ref{thm:inv} are true for any Hopf algebra with the square of the antipode being an inner automorphism.
\end{remark}

\subsection{An application -- the HOMFLY polynomial}\label{sect:app}

It is well-known that any $\check{R}$-matrix satisfying the Yang-Baxter equation leads to a
representation of the braid group, and under favourable conditions, a topogical invariant of link invariant  can be constructed from the representation. We now construct a link invariant from the $\check{R}$-matrix given by \eqref{eq:R}.

Recall that the braid group $B_r$ on $r$ strings is generated by
$b_1, b_2, \dots, b_{r-1}$ subject to the relations
\[
\begin{aligned}
&b_i b_j = b_j b_i, \quad \text{if $|i-j|>1$}, \\
&b_i b_{i+1} b_i =   b_{i+1} b_i  b_{i+1}, \quad \text{for $i<r-1$}.
\end{aligned}
\]
It has the chain of subgroups $B_2<B_3<\dots < B_{r-1}< B_r$, where
$B_{k-1}$ is the subgroup of $B_k$ generated by $b_i$ with $1\le i\le k-2$.

A closely related algebra which will be relevant here is the Hecke algebra $H_r(q)$ of type A, which is generated by $T_i$ with $i=1, 2, \dots, r-1$ subject to the relations
\[
\begin{aligned}
&T_i T_j = T_j T_i, \quad \text{if $|i-j|>1$}, \\
&T_i T_{i+1} T_i =   T_{i+1} T_i  T_{i+1},  \\
&(T_i-q)(T_i+q^{-1})=0.
\end{aligned}
\]
It is isomorphic to the quotient of the group algebra $\C(q)B_r$ of $B_r$ by the two-sided ideal generated by $(b_i-q)(b_i+q^{-1})$ for all $i$. We denote by $\psi_r: \C(q)B_r\longrightarrow H_r(q)$ the canonical surjection.

Write $E_r : =\End_{\U_q(\gl_{m, n})}(V^{\otimes r})$.  Recall that the $\check{R}$-matrix given by \eqref{eq:R-check}  belongs to $E_2$.
The proof of the following result is routine.
\begin{proposition} The following map defines a representation of the group algebra of the braid group $B_r$,
\[
\nu_r:  \C(q)B_r \longrightarrow  E_r, \quad \nu_r(b_i)= \id_V^{\otimes (i-1)} \otimes \check{R}\otimes \id_V^{\otimes (r-i-1)}.
\]
It factors through the Hecke algebra $H_r(q)$, that is, we have the following commutative diagram.
\[
\xymatrix{
\C(q)B_r\ar[d]_{\psi_r}\ar[r]^{\nu_r} &E_r&\\
H_r(q) \ar[ru]&
}
\]
\end{proposition}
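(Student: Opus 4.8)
The plan is to reduce the entire statement to the three properties of $\check{R}$ already established in Corollary~\ref{cor:R-check}: that $\check{R}$ is an invertible element of $\End_{\U_q(\gl_{m, n})}(V\otimes V)$, that it satisfies the braid form of the Yang--Baxter equation \eqref{eq:YBE}, and that it obeys the quadratic relation \eqref{eq:Hecke}. Since the defining relations of $B_r$ and of $H_r(q)$ are built precisely out of these, the whole proposition is obtained by transporting each property from the two-string situation to $r$ strings by tensoring with identity operators on the remaining factors.

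First I would check that each $\nu_r(b_i)$ genuinely lands in $E_r$, i.e.\ is a $\U_q(\gl_{m, n})$-module endomorphism of $V^{\otimes r}$. This is where the Hopf-algebraic structure enters: using the coassociativity of $\Delta$, the action of any $x$ on $V^{\otimes r}$ can be regrouped so that it acts through $\Delta$ on the pair of factors $(i, i+1)$ and through the remaining iterated co-multiplication on the other factors. Since $\check{R}$ commutes with the $\U_q(\gl_{m, n})$-action on $V\otimes V$ by Corollary~\ref{cor:R-check}, while the identity commutes with the action on every other factor, the operator $\nu_r(b_i)=\id_V^{\otimes(i-1)}\otimes\check{R}\otimes\id_V^{\otimes(r-i-1)}$ intertwines the action, so $\nu_r(b_i)\in E_r$.

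Next I would verify the braid relations for the operators $\nu_r(b_i)$. The far-commutativity $\nu_r(b_i)\nu_r(b_j)=\nu_r(b_j)\nu_r(b_i)$ for $|i-j|>1$ is immediate, since the two operators act nontrivially on disjoint sets of tensor factors. The relation $\nu_r(b_i)\nu_r(b_{i+1})\nu_r(b_i)=\nu_r(b_{i+1})\nu_r(b_i)\nu_r(b_{i+1})$ is exactly the Yang--Baxter equation \eqref{eq:YBE} applied to the three consecutive factors $(i, i+1, i+2)$, tensored with the identity on all remaining factors. Because $\check{R}$ is invertible by Corollary~\ref{cor:R-check}, each $\nu_r(b_i)$ is an invertible element of $E_r$, so the assignment $b_i\mapsto\nu_r(b_i)$ extends to a group homomorphism from $B_r$ into the group of units of $E_r$, and then by $\C(q)$-linearity to the claimed algebra homomorphism $\nu_r:\C(q)B_r\longrightarrow E_r$.

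Finally, the factorisation through $H_r(q)$ follows from the quadratic relation \eqref{eq:Hecke}. Tensoring $(\check{R}-q)(\check{R}+q^{-1})=0$ with the identity on the remaining factors gives $(\nu_r(b_i)-q)(\nu_r(b_i)+q^{-1})=0$ in $E_r$ for every $i$, so $\nu_r$ annihilates the two-sided ideal of $\C(q)B_r$ generated by the elements $(b_i-q)(b_i+q^{-1})$. By definition this ideal is the kernel of $\psi_r:\C(q)B_r\longrightarrow H_r(q)$; hence $\nu_r$ descends to a unique algebra homomorphism $H_r(q)\longrightarrow E_r$ making the stated triangle commute. The only step with any genuine content is the first --- confirming via coassociativity that $\nu_r(b_i)$ is a module map --- and even this is a routine instance of the general fact that tensoring a module intertwiner with identities on the other factors again yields an intertwiner; the braid and Hecke relations are then purely formal transcriptions of Corollary~\ref{cor:R-check}, which is why the proof can fairly be called routine.
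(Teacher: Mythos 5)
Your proposal is correct and follows essentially the same route as the paper's own proof: membership of $\nu_r(b_i)$ in $E_r$ from $\check{R}\in E_2$, the braid relations from the Yang--Baxter equation \eqref{eq:YBE}, and the factorisation through $H_r(q)$ from the quadratic relation \eqref{eq:Hecke}. The paper merely states these three steps tersely, whereas you spell out the routine details (coassociativity for the intertwining property, invertibility for extending to the group algebra, and the ideal-theoretic argument for the descent), so there is no substantive difference.
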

\begin{proof} Since $\check{R}\in E_2$, we have $\nu_r(b_i)\in E_r$ for all $i$.
The $\check{R}$-matrix satisfies the Yang-Baxter equation \eqref{eq:YBE}, hence it immediately follows that $\nu_r$ defines a representation of the braid group.   
By equation \eqref{eq:Hecke},
\begin{eqnarray}\label{eq:Hecke-T}
(\nu_r(b_i)-q)(\nu_r(b_i)+q^{-1})=0, \quad \forall i.
\end{eqnarray}
Thus this braid group representation factors through the Hecke algebra $H_r(q)$.
\end{proof}

\begin{remark}\label{rem:double-comm}
A natural question is whether $E_r=\nu_r(\C(q)B_r)$.  The analogous question has an affirmative answer for the ordinary quantum general linear group (see, e.g., \cite{J2, LZ}) and quantum general linear supergroup \cite{Z98, Zy}, and we expect the same answer in the present case.
\end{remark}

Hereafter  we assume that $m\ne n$, thus $\dim_q(V)\ne 0$.  For each $r$, we define a map
\begin{eqnarray}\label{eq:Markov}
\phi_r: B_r \longrightarrow \C(q),  \quad b\mapsto \frac{\tau_V^{\otimes r}(\nu_r(b))}{\dim_q(V)^r}.
\end{eqnarray}
For any $b$ in the subgroup $B_{r-1}$ of $B_r$ generated by $b_i$ for $1\le i<r-1$, we have
\[
\phi_r(b) = \phi_{r-1}(b).
\]

In analogy with \cite[Proposition 3]{ZGB1} (see also \cite{LGZ, RT}), we have the following result.
\begin{theorem} \label{thm:knot-inv} The maps $\phi_r$ have the following Markov properties
\[
\begin{aligned}
I. \quad & \phi_r(b b') = \phi_r(b'b), && \forall b, b'\in B_r,\\
II. \quad & \phi_r(b b_{r-1}) = \frac{q^{m-n}}{[m-n]_q}\phi_r(b), &&\\
& \phi_r(b b_{r-1}^{- 1}) = \frac{q^{n-m}}{[m-n]_q}\phi_r(b),
	&\quad&b\in B_{r-1}<B_r.
\end{aligned}
\]
Thus they give rise to topological invariant of framed links, which is the HOMFLY polynomial.
\end{theorem}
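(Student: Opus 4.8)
\emph{The plan} is to verify the two Markov properties by direct computation and then to invoke Markov's theorem, together with the Hecke-algebra factorisation of $\nu_r$, to recognise the resulting framed-link invariant as the HOMFLY polynomial.

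Property I is the trace property and reduces to cyclicity of the ordinary trace. Write $\pi_r=(\nu^{\otimes r})\Delta^{(r-1)}$ for the representation of $\U_q(\gl_{m, n})$ on $V^{\otimes r}$. Since each $K_b$ is group-like, so is $K_{2\rho}$, whence $\pi_r(K_{2\rho})=\nu(K_{2\rho})^{\otimes r}$ and $\tau_V^{\otimes r}(A)=\tr(\pi_r(K_{2\rho})A)$. As $\nu_r(b),\nu_r(b')\in E_r$ are intertwiners, they commute with $\pi_r(K_{2\rho})$, so I may move factors freely under the trace to obtain $\tr(\pi_r(K_{2\rho})\nu_r(b)\nu_r(b'))=\tr(\pi_r(K_{2\rho})\nu_r(b')\nu_r(b))$. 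Dividing by $\dim_q(V)^r$ gives $\phi_r(bb')=\phi_r(b'b)$.

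Property II is the substantive step, and I would reduce it to a single partial-trace computation. For $b\in B_{r-1}$ one has $\nu_r(b)=\nu_{r-1}(b)\otimes\id_V$ and $\nu_r(b_{r-1})=\id_V^{\otimes(r-2)}\otimes\check{R}$. Applying the partial quantum trace $\Phi=\id\otimes\tau_V$ of \thmref{thm:inv} with $V_1=V^{\otimes(r-1)}$ and $V_2=V$, and pulling out $\nu_{r-1}(b)$ (which acts trivially on the last factor), I get $\Phi(\nu_r(bb_{r-1}))=\nu_{r-1}(b)\cdot\big(\id^{\otimes(r-2)}\otimes(\id_V\otimes\tau_V)(\check{R})\big)$. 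By \thmref{thm:inv}, applied to $\check{R}\in\End_{\U_q(\gl_{m, n})}(V\otimes V)$ (\corref{cor:R-check}), the partial trace $(\id_V\otimes\tau_V)(\check{R})$ is an intertwiner of $V$, and since $V$ is simple with one-dimensional weight spaces it equals $c\,\id_V$ for a scalar $c$. Reading off the $v_1$-component from the explicit action of $\check{R}$ on the basis and from the eigenvalues $\nu(K_{2\rho})v_c=D_c v_c$, the only surviving contribution is $c=q_1 D_1$, which equals $q^{m-n}$ when $m+n$ is even and $q^{m-n+1}$ otherwise; in both cases the computed value of $\dim_q(V)$ yields $c/\dim_q(V)=q^{m-n}/[m-n]_q$. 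Hence $\phi_r(bb_{r-1})=(c/\dim_q V)\phi_r(b)$, the first half of II. For the negative stabilisation I use the Hecke relation \eqref{eq:Hecke} in the form $\check{R}^{-1}=\check{R}-(q-q^{-1})\id$, so that $(\id_V\otimes\tau_V)(\check{R}^{-1})=\big(c-(q-q^{-1})\dim_q(V)\big)\id_V$; substituting $\dim_q(V)$ collapses this scalar and gives $\phi_r(bb_{r-1}^{-1})=(q^{n-m}/[m-n]_q)\phi_r(b)$.

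Finally, properties I and II are exactly the invariance and transformation laws under the two Markov moves, conjugation and (de)stabilisation, so by Markov's theorem the compatible family $\{\phi_r\}$ descends to a well-defined invariant of framed links, the two distinct stabilisation factors recording the framing change. To identify this invariant with the HOMFLY polynomial I would use that $\nu_r$ factors through the Hecke algebra $H_r(q)$ (the preceding Proposition), so that $\phi_r$ is a Markov trace on $H_r(q)$; read on the three tangles differing at a single crossing, the quadratic relation \eqref{eq:Hecke} in the form $\check{R}-\check{R}^{-1}=(q-q^{-1})\id$ becomes the HOMFLY skein relation, while the value of $\dim_q(V)$ and the stabilisation factors fix the two HOMFLY variables; uniqueness of the Markov trace on $H_r(q)$ then forces the invariant to coincide with HOMFLY. \emph{The main obstacle} is the Property II scalar computation — confirming that the partial trace is genuinely scalar and evaluating it uniformly across the $m+n$ parity cases — together with making the final skein-relation and normalisation identification with HOMFLY precise, rather than merely producing some framed-link invariant.
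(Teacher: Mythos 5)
Your proposal is correct and follows essentially the same route as the paper's proof: Property I from the fact that $\nu_r(b)$ commutes with $\nu(K_{2\rho})^{\otimes r}$ plus cyclicity of the trace, Property II by reducing to the statement that $(\id_V\otimes\tau_V)(\check{R}^{\pm 1})=\gamma_\pm\,\id_V$ via \thmref{thm:inv} (with the Schur-type argument you make explicit) and then evaluating the scalars in the two parity cases, and the HOMFLY identification through the Hecke-algebra factorisation and the skein relation coming from \eqref{eq:Hecke}. The only minor variation is that you extract $\gamma_-$ from the Hecke relation $\check{R}^{-1}=\check{R}-(q-q^{-1})\id_{V\otimes V}$ instead of the paper's direct matrix computation of the partial trace of $\check{R}^{-1}$; both yield $\gamma_-/\dim_q(V)=q^{n-m}/[m-n]_q$.
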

\begin{proof}
If we can prove that maps $\phi_r$ have the Markov properties, then they give rise to a link invariant. Now \eqref{eq:Hecke-T} leads to a skein relation which is the same as that defining the HOMFLY polynomial of framed links \cite{HOMFLY} with $m-n$ as the additional parameter.

The proof of the Markov properties $\phi_r$ is rather standard \cite[Proposition 3]{ZGB1} (except the computation of the scalar factors in part (2)),  thus we will only given an outline of the proof.

Since $\check{R}\in \End_{\U_q(\gl_{m, n})}(V\otimes V)$, we have $\nu_r(b)\in \End_{\U_q(\gl_{m, n})}(V^{\otimes r})$ for any $b\in B_r$.  This in particular implies that $\nu_r(b)$ commutes with $\nu(K_{2\rho})^{\otimes r}$. Hence the cyclic property of $\phi_r$ (i.e., property I) follows.

It follows from Theorem \eqref{thm:inv} that $(\id_V\otimes\tau_V)(\check{R}^{\pm 1})=\gamma_\pm\id_V$ for some scalars $\gamma_\pm$.
Hence  property II follows but for the scalars $ \frac{\gamma_\pm}{\dim_q(V)}$. Now we need to show that
\[
 \frac{\gamma_\pm}{\dim_q(V)}= \frac{q^{\pm (m-n)}}{[m-n]_q}.
\]

Note that  $\gamma_\pm$ can be computed as follows. For each $d\in I$, introduce the projection operator $p_d: V\longrightarrow \C(q)v_d$, and consider
\[
\left(p_c\otimes p_d\nu(K_{2\rho})\right)\check{R}^{\pm 1}(v_c\otimes v_d) = \beta_{c d}^\pm v_c\otimes v_d,
\]
where $\beta_{c d}^\pm$ are scalars. Then
\[
\gamma_\pm = \sum_{d\in I} \beta_{c d}^\pm, \quad \text{which are independent of $c$}.
\]
Direct calculations using the explicit formula for $\check{R}$ yield
\[
\begin{aligned}
\beta_{1 d}^+ &=
\left\{\begin{array}{l l}
q^{m-n}\delta_{d 1}, & \text{if $m+n$ is even}, \\
q^{m-n+1}\delta_{d 1},& \text{if $m+n$ is odd}.
\end{array}
\right.
\end{aligned}
\]
Hence in both cases, we have
$
\frac{\gamma_+}{\dim_q(V)} = \frac{ q^{m-n}}{[m-n]_q}.
$

Similar computation leads to
\[
\begin{aligned}
\beta_{m+n,  d}^- &=
\left\{\begin{array}{l l}
q^{-m+n}\delta_{d, m+n}, & \text{if $m+n$ is even}, \\
q^{-m+n+1}\delta_{d, m+n},& \text{if $m+n$ is odd}.
\end{array}
\right.\\
\end{aligned}
\]
Hence
$
\frac{\gamma_-}{\dim_q(V)} = \frac{ q^{-m+n}}{[m-n]_q}.
$
This completes the proof of the theorem.
\end{proof}

\begin{remark} It is possible to generalise the above construction of link invariant to the case with $m=n$ by using results of \cite{LGZ}.
\end{remark}

\section{Comments}
In this final section of the paper, we comment upon a possible generalisation of the results to other classical Lie algebras, and point out some similarities between degenerate quantum groups and quantum supergroups.

\subsection{Degenerate quantum groups of other types}\label{sect:general}

Recall that the definition of a Drinfeld-Jimbo quantum group can be simply encoded in the Dynkin diagram of the corresponding Lie algebra.  We can also mimic this for the degenerate quantum group $\U_q(\fsl_{m, n})$. This will then suggest a possible generalisation of $\U_q(\fsl_{m, n})$ to degenerate quantum groups of other classical types.

Draw $\ell$ nodes ordered from left to right, and colour all nodes white except for the $m$-th one, which is grey. If we connect the neighbouring nodes by one line, we obtain a generalised Dynkin diagram of $A$ type in Figure \ref{fig:A}.  We can similarly draw generalised Dynkin diagrams of $B$, $C$ and $D$ types, as shown in Figure \ref{fig:B}, Figure \ref{fig:C} and Figure \ref{fig:D} respectively.

\begin{figure}[h]
\begin{picture}(200, 20)(40,0)
\put(47, 10){\tiny $1$}
 \put(110, 10){\tiny $m-1$}
\put(145, 10){\tiny $m$}
\put(170, 10){\tiny $m+1$}
\put(217, 10){\tiny $\ell$}
\put(50, 0){\circle{10}}
\put(55, 0){\line(1, 0){10}}
\put(65, -1){...}
\put(75, 0){\line(1, 0){10}}
\put(90, 0){\circle{10}}
\put(95, 0){\line(1, 0){20}}
 \put(120, 0){\circle{10}}
\put(125, 0){\line(1, 0){20}}
{\color{gray} \put(150, 0){\circle*{10}}}
\put(155, 0){\line(1, 0){20}}
\put(180, 0){\circle{10}}
\put(185, 0){\line(1, 0){10}}
\put(195, -1){...}
\put(205, 0){\line(1, 0){10}}
\put(220, 0){\circle{10}}
\end{picture}
\caption{Degenerate quantum group of type $A$}
\label{fig:A}

\begin{picture}(200, 30)(10,0)
\put(17, 10){\tiny $1$}
\put(80, 10){\tiny $m-1$}
 \put(115, 10){\tiny $m$}
\put(140, 10){\tiny $m+1$}
\put(187, 10){\tiny $\ell$}
%
%
\put(20, 0){\circle{10}}
\put(45, 2){\line(-1, 0){20}}
\put(45, -2){\line(-1, 0){20}}
\put(30, -3.2){$<$}
\put(50, 0){\circle{10}}
\put(55, 0){\line(1, 0){10}}
\put(65, -1){...}
\put(75, 0){\line(1, 0){10}}
\put(90, 0){\circle{10}}
\put(95, 0){\line(1, 0){20}}
{\color{gray} \put(120, 0){\circle*{10}}}
\put(125, 0){\line(1, 0){20}}
\put(150, 0){\circle{10}}
\put(155, 0){\line(1, 0){10}}
\put(165, -1){...}
\put(175, 0){\line(1, 0){10}}
\put(190, 0){\circle{10}}
\end{picture}
\caption{Degenerate quantum group of type $B$}
\label{fig:B}

\begin{picture}(200, 30)(10,0)
\put(17, 10){\tiny $1$}
\put(80, 10){\tiny $m-1$}
 \put(115, 10){\tiny $m$}
\put(140, 10){\tiny $m+1$}
\put(187, 10){\tiny $\ell$}
%
%
\put(20, 0){\circle{10}}
\put(45, 2){\line(-1, 0){20}}
\put(45, -2){\line(-1, 0){20}}
\put(30, -3.2){$>$}
\put(50, 0){\circle{10}}
\put(55, 0){\line(1, 0){10}}
\put(65, -1){...}
\put(75, 0){\line(1, 0){10}}
\put(90, 0){\circle{10}}
\put(95, 0){\line(1, 0){20}}
{\color{gray} \put(120, 0){\circle*{10}}}
\put(125, 0){\line(1, 0){20}}
\put(150, 0){\circle{10}}
\put(155, 0){\line(1, 0){10}}
\put(165, -1){...}
\put(175, 0){\line(1, 0){10}}
\put(190, 0){\circle{10}}
\end{picture}
\caption{Degenerate quantum group of type $C$}
\label{fig:C}

\begin{picture}(200, 70)(10,-25)
\put(17, 28){\tiny $1$}
\put(17, -33){\tiny $2$}
\put(47, 10){\tiny $3$}
\put(80, 10){\tiny $m-1$}
 \put(115, 10){\tiny $m$}
\put(140, 10){\tiny $m+1$}
\put(187, 10){\tiny $\ell$}
%
%
\put(20, 20){\circle{10}}
\put(20, -20){\circle{10}}
\put(50, 5){\line(-5, 3){26}}
\put(50, -5){\line(-5, -3){26}}
%
%
\put(50, 0){\circle{10}}
\put(55, 0){\line(1, 0){10}}
\put(65, -1){...}
\put(75, 0){\line(1, 0){10}}
\put(90, 0){\circle{10}}
\put(95, 0){\line(1, 0){20}}
{\color{gray} \put(120, 0){\circle*{10}}}
\put(125, 0){\line(1, 0){20}}
\put(150, 0){\circle{10}}
\put(155, 0){\line(1, 0){10}}
\put(165, -1){...}
\put(175, 0){\line(1, 0){10}}
\put(190, 0){\circle{10}}
\end{picture}
\caption{Degenerate quantum group of type $D$}
\label{fig:D}
\end{figure}

Consider the generalised Dynkin diagram of type $X$, which has $\ell$ nodes with the $m$-th one coloured grey. The following three subdiagrams are particularly relevant for our discussion below: the subdiagram on the left side of the grey note, which is a Dynkin diagram of type $X_{m-1}$;
the subdiagram on the right side of the grey note, which is a Dynkin diagram of type $A_{\ell-m}$;
and the subdiagram consisting of the grey node and its two neighbours.  

For the purpose of illustrating the general ideas, we assume that 
$m\ge 3$ if $X=B, C$, and $m\ge 4$ if $X=D$.
Then the degenerate quantum group associated with the generalised Dynkin diagram $X$ is generated  by $\ell$ sets of generators $e_i, f_i, k_i^{\pm 1}$, each set corresponding to a node in the diagram, such that
\begin{enumerate}
\item[(a)] the generators $e_i, f_i, k_i^{\pm 1}$ commute with $e_j, f_j, k_j^{\pm 1}$  if the $i$-th and $j$-th nodes are not directly connected;
\item[(b)] $\{e_i, f_i, k_i^{\pm 1}\mid 1\le i< m\}$ generates the quantum group $\U_q(X_{m-1})$;
\item[(c)] $\{e_i, f_i, k_i^{\pm 1}\mid m+1\le i\le \ell\}$ generates the quantum group $\U_p(A_{\ell-m})$ with $p=-q^{-1}$;
\item[(d)]  $\{e_i, f_i, k_i^{\pm 1}\mid i=m, m\pm 1\}$ generates the degenerate quantum group $\U_q(\fsl_{2, 2})$; 
\end{enumerate}
where,  if $\ell=m$, we replace $(d)$ by
\begin{enumerate}
\item[(d$^\prime$)]  $\{e_i, f_i, k_i^{\pm 1}\mid i=m-1, m\}$ generates the degenerate quantum group $\U_q(\fsl_{2, 1})$. 
\end{enumerate}
Slight modifications of the above are needed for small $m$, which we will discuss in 
a future work, where we will develop a systematic theory of degenerate quantum groups of all finite and affine Kac-Moody types.

\begin{remark} Even though the generalised Dynkin diagrams given here formally look the same as the  Dynkin diagrams of the classical series of Lie superalgebras, they have totally different meanings from the latter.
\end{remark}

\subsection{Similarities with the quantum general linear supergroup}\label{sect:duality}

It is clear that the degenerate quantum general linear group $\U_q(\gl_{m, n})$ studied here is very different from the usual quantum general linear group $\U_q(\gl_{m+n})$ \cite{J2} as Hopf algebras.  It is also quite different from the quantum general linear supergroup $\U_q(\gl_{m|n})$, as the latter is a Hopf superalgebra.  In fact,  Remark \ref{rem:no-deform} implies that $\U_q(\gl_{m, n})$ is not the ``deformation quantisation" of any universal enveloping algebra.
Nevertheless, there are many similarities between $\U_q(\gl_{m, n})$ and $\U_q(\gl_{m|n})$. For example, their definitions both require quartic Serre relations, which differ only  in details;  the parabolic decomposition of $\U_q(\gl_{m, n})$ given in Proposition \ref{prop:para} resembles that of $\U_q(\gl_{m|n})$ given in \cite{Z93};  and
the parametrisations of their finite dimensional simple modules are also similar (see Remark \ref{rem:sim-super}).

It will be very interesting to determine whether there  is a precise connection between the Hopf algebra $\U_q(\gl_{m, n})$ and Hopf superalgebra $\U_q(\gl_{m|n})$.
We hope to investigate this in a future work. This may require a change of the foundation to study Hopf algebras over braided tensor categories \cite{M},  so that Hopf algebras and Hopf superalgebras are put on equal footing.
If any connection exists  between $\U_q(\gl_{m, n})$ and $\U_q(\gl_{m|n})$,  it is likely to be in the form of the quantum correspondences studied in \cite{XZ, Z92, Z97}.  

\appendix
\section{Proof of Lemma \ref{lem:Serre-Q}}

We consider the Hopf algebra $\widetilde{\U}_q(\gl_{m, n})$ defined in Lemma \ref{lem:hopf-mn}.
Here $E_{m-1, m+2}$, $E_{m+2, m-1}$, $E_{m-1, m+1}$ and $E_{m+1, m-1}$ are all elements in
$\widetilde{\U}_q(\gl_{m, n})$, which are defined immediately before the statement of  Lemma \ref{lem:Serre-Q}. The results obtained in this appendix are used in the proof of Lemma \ref{lem:Serre-Q}.

\subsection{Some commutation relations}\label{sect:commutaions}
We have the following result.
\begin{lemma}\label{lem:A1}
The following relations hold in $\widetilde{\U}_q(\gl_{m, n})$.
\begin{eqnarray}
&{[f_m, E_{m-1, m+1}]} =-e_{m-1} k_m q_m^{-1}, \label{eq:lem1}\\
&{[f_{m-1}, E_{m-1, m+1}]}
=  e_m k_{m-1}^{-1}, \label{eq:lem2}\\
&{[f_m, E_{m-1, m+2}]} =0, \label{eq:lem3}\\
&{[f_{m-1}, E_{m-1, m+2}]}
= E_{m, m+2} k_{m-1}^{-1}, \label{eq:lem4}\\
&{[f_{m+1}, E_{m-1, m+2}]}
= - E_{m-1, m+1} k_{m+1}q_{m+1}^{-1}, \label{eq:lem5}
\end{eqnarray}
where $
E_{m, m+2}:=e_m  e_{m+1} - q_{m+1}^{-1} e_{m+1} e_m.
$
\end{lemma}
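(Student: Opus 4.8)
The plan is to verify all five identities by direct computation in $\widetilde{\U}_q(\gl_{m,n})$, treating \eqref{eq:lem1} and \eqref{eq:lem2} as base cases and then obtaining \eqref{eq:lem3}, \eqref{eq:lem4}, \eqref{eq:lem5} by applying the commutators $[f_m,-]$, $[f_{m-1},-]$, $[f_{m+1},-]$ to the defining expression $E_{m-1,m+2}=E_{m-1,m+1}e_{m+1}-q_{m+1}^{-1}e_{m+1}E_{m-1,m+1}$ and substituting the base cases.

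First, for \eqref{eq:lem1} and \eqref{eq:lem2} I would expand $E_{m-1,m+1}=e_{m-1}e_m-q_m^{-1}e_m e_{m-1}$ and apply $[f_m,-]$ (resp. $[f_{m-1},-]$). By \eqref{eq:gl-4} the only nonzero contribution comes from $[f_m,e_m]=-\tfrac{k_m-k_m^{-1}}{q_m-q_m^{-1}}$ (resp. $[f_{m-1},e_{m-1}]$), since $f_m$ commutes with $e_{m-1}$ and $f_{m-1}$ with $e_m$. This leaves two terms, each carrying one copy of the Cartan element $\tfrac{k_m-k_m^{-1}}{q_m-q_m^{-1}}$ flanked by a single $e$-generator. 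The crux is to move that Cartan factor to one side: using \eqref{eq:gl-2} for the composite $k_a=K_aK_{a+1}^{-1}$ I read off $k_m e_{m-1}k_m^{-1}=q_m^{-1}e_{m-1}$ and $k_{m-1}e_m k_{m-1}^{-1}=q_m^{-1}e_m$, so that the two terms combine into a single monomial. A short simplification using $q_{m-1}=q_m=q$ then collapses the scalar coefficients to $-e_{m-1}k_m q_m^{-1}$ and $e_m k_{m-1}^{-1}$ respectively.

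Next, \eqref{eq:lem3}, \eqref{eq:lem4}, \eqref{eq:lem5} follow from the Leibniz rule $[f_j,XY]=[f_j,X]Y+X[f_j,Y]$. For \eqref{eq:lem3}, since $[f_m,e_{m+1}]=0$, substituting \eqref{eq:lem1} gives $-e_{m-1}k_m q_m^{-1}e_{m+1}+q_{m+1}^{-1}e_{m+1}e_{m-1}k_m q_m^{-1}$; after commuting $k_m$ past $e_{m+1}$ (which produces $q_{m+1}^{-1}$) and using $[e_{m-1},e_{m+1}]=0$ from \eqref{eq:gl-5}, the two terms cancel exactly. For \eqref{eq:lem4}, $[f_{m-1},e_{m+1}]=0$ and $k_{m-1}$ commutes with $e_{m+1}$, so substituting \eqref{eq:lem2} directly reassembles $(e_m e_{m+1}-q_{m+1}^{-1}e_{m+1}e_m)k_{m-1}^{-1}=E_{m,m+2}k_{m-1}^{-1}$. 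For \eqref{eq:lem5}, $[f_{m+1},E_{m-1,m+1}]=0$ because $f_{m+1}$ commutes with both $e_{m-1}$ and $e_m$, leaving only the $[f_{m+1},e_{m+1}]$ contribution; moving $k_{m+1}$ past $E_{m-1,m+1}$ via the weight $k_{m+1}E_{m-1,m+1}k_{m+1}^{-1}=q_{m+1}^{-1}E_{m-1,m+1}$ and performing the same scalar collapse as in \eqref{eq:lem1} yields $-E_{m-1,m+1}k_{m+1}q_{m+1}^{-1}$.

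The computations are all elementary, so the only real obstacle is bookkeeping: correctly reading off each conjugation weight $K_a e_b K_a^{-1}=q_a^{\delta_{ab}-\delta_{a,b+1}}e_b$ from \eqref{eq:gl-2} and propagating it to $k_a=K_aK_{a+1}^{-1}$, while keeping track of the distinction $q_m=q_{m-1}=q$ versus $q_{m+1}=p=-q^{-1}$. The decisive cancellation in \eqref{eq:lem3} and the scalar collapses in \eqref{eq:lem1} and \eqref{eq:lem5} are precisely where a misplaced sign or power would surface, so I would carry out those weight calculations most carefully.
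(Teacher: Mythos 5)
Your proposal is correct and follows essentially the same route as the paper's proof: expand $E_{m-1,m+1}$ to obtain \eqref{eq:lem1}--\eqref{eq:lem2} from \eqref{eq:gl-4} together with the weight relations $k_m e_{m-1}k_m^{-1}=q_m^{-1}e_{m-1}$, $k_{m-1}e_m k_{m-1}^{-1}=q_m^{-1}e_m$, and then derive the remaining three identities by the Leibniz rule and the same scalar collapses. The only cosmetic difference is in \eqref{eq:lem4}, where the paper first rewrites $E_{m-1,m+2}=e_{m-1}E_{m,m+2}-q_m^{-1}E_{m,m+2}e_{m-1}$ and commutes $f_{m-1}$ past $e_{m-1}$, whereas you substitute \eqref{eq:lem2} into the original bracketing and use that $k_{m-1}^{-1}$ commutes with $e_{m+1}$; both computations are valid.
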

\begin{proof}
The following computation proves all the relations except the fourth one.
\[
\begin{aligned}
-[f_m, E_{m-1, m+1}]
&= e_{m-1} \frac{k_m- k_m^{-1}}{q_m-q_m^{-1}}
	- q_m^{-1} \frac{k_m- k_m^{-1}}{q_m-q_m^{-1}} e_{m-1}\\
&=e_{m-1} k_m q_m^{-1}, 
\end{aligned}
\]
\[
\begin{aligned}
-[f_{m-1}, E_{m-1, m+1}]
&=	\frac{k_{m-1}- k_{m-1}^{-1}}{q_{m-1}-q_{m-1}^{-1}} e_m - 	q_m^{-1} e_m
\frac{k_{m-1}- k_{m-1}^{-1}}{q_{m-1}-q_{m-1}^{-1}}\\
&= - e_m k_{m-1}^{-1}, 
\end{aligned}
\]
\[
\begin{aligned}
-[f_m, E_{m-1, m+2}]
&=e_{m-1} k_m q_m^{-1} e_{m+1} - q_{m+1}^{-1} e_{m+1} e_{m-1} k_m q_m^{-1}\\
&=0, 
\end{aligned}
\]
\[
\begin{aligned}
-[f_{m+1}, E_{m-1, m+2}]
&=  E_{m-1, m+1} \frac{k_{m+1}- k_{m+1}^{-1}}{q_{m+1}-q_{m+1}^{-1}}  -  q_{m+1}^{-1} \frac{k_{m+1}- k_{m+1}^{-1}}{q_{m+1}-q_{m+1}^{-1}} E_{m-1, m+1}\\
&= E_{m-1, m+1} k_{m+1}q_{m+1}^{-1}.
\end{aligned}
\]
To prove the fourth relation in the lemma, we note that
\[
E_{m-1, m+2}=e_{m-1} E_{m, m+2}  - q_m^{-1} E_{m, m+2} e_{m-1},
\]
which  immediately follows from the definition of $E_{m-1, m+2}$. Thus
\[
\begin{aligned}
-[f_{m-1}, E_{m-1, m+2}]
&=  \frac{k_{m-1}- k_{m-1}^{-1}}{q_{m-1}-q_{m-1}^{-1}} E_{m, m+2} - q_m^{-1} E_{m, m+2}\frac{k_{m-1}- k_{m-1}^{-1}}{q_{m-1}-q_{m-1}^{-1}}\\
&= - E_{m, m+2} k_{m-1}^{-1}.
\end{aligned}
\]
This completes the proof.
\end{proof}

\subsection{Proof of \eqref{eq:Q-2} and \eqref{eq:Q-3}}\label{sect:proof-Q}
\begin{proof} Let us prove \eqref{eq:Q-2}.
This is done by straightforward calculations, which, however, are very lengthy, thus are separated into smaller parts.

(a). Calculation of  $\Delta(E_{m-1,m+1})$.
\begin{eqnarray*}
\begin{aligned}
\Delta(E_{m-1,m+1})
&=E_{m-1, m+1}\otimes k_{m-1}k_m +1\otimes E_{m-1, m+1}\\
&+(1-q^{-2})e_m\otimes e_{m-1}k_m.
\end{aligned}
\end{eqnarray*}

(b). Calculation of $\Delta(E_{m-1,m+2})$.

We have $\Delta(E_{m-1,m+2})=\Delta(E_{m-1,m+1}) \Delta(e_{m+1})
-p^{-1} \Delta(e_{m+1})\Delta(E_{m-1,m+1})$.
The two terms will be calculated separately by using the result of part (a).
\begin{eqnarray*}
\begin{aligned}
&\Delta(E_{m-1,m+1})\Delta(e_{m+1})\\
&= E_{m-1,m+1} e_{m+1}\otimes k_{m-1} k_m k_{m+1} + 1\otimes E_{m-1,m+1} e_{m+1}\\
&+ e_{m+1}\otimes E_{m-1,m+1} k_{m+1} +p^{-1} E_{m-1,m+1}\otimes e_{m+1} k_{m-1} k_m\\
&+(1-q^{-2})e_m e_{m+1}\otimes e_{m-1}k_mk_{m+1}\\
&+p^{-1}(1-q^{-2})e_m\otimes e_{m-1}e_{m+1} k_m; \\
&\Delta(e_{m+1})\Delta(E_{m-1,m+1})\\
&=e_{m+1} E_{m-1, m+1}\otimes k_{m-1} k_m k_{m+1} + 1\otimes e_{m+1} E_{m-1, m+1}\\
&+p^{-1} e_{m+1}\otimes E_{m-1, m+1} k_{m+1} +  E_{m-1, m+1}\otimes e_{m+1} k_{m-1} k_m\\
&+ (1-q^{-2}) e_{m+1} e_m\otimes e_{m-1} k_m k_{m+1}\\
&+ (1-q^{-2})  e_m\otimes e_{m-1} e_{m+1} k_m.
 \end{aligned}
\end{eqnarray*}
Combining these results,  we obtain
\[
\begin{aligned}
\Delta(E_{m-1,m+2})
&=E_{m-1,m+2}\otimes k_{m-1} k_m k_{m+1} + 1\otimes E_{m-1,m+2}\\
&\quad + (1-q^2) e_{m+1}\otimes E_{m-1,m+1} k_{m+1} \\
&\quad + (1-q^{-2}) E_{m, m+2} \otimes e_{m-1} k_m k_{m+1}.
 \end{aligned}
\]

(c). Calculation of $\Delta (Q^{+})$.

We have $\Delta (Q^{+})=\Delta (e_m)\Delta(E_{m-1,m+2})-\Delta(E_{m-1,m+2})\Delta (e_m)$.
Let us denote
\[
\begin{aligned}
M&:=E_{m-1,m+2}\otimes k_{m-1} k_m k_{m+1} + 1\otimes E_{m-1,m+2},\\
W&:= (1-q^2) e_{m+1}\otimes E_{m-1,m+1} k_{m+1} \\
&\quad + (1-q^{-2}) E_{m,  m+2} \otimes e_{m-1} k_m k_{m+1}.
 \end{aligned}
\]
Then $\Delta(E_{m-1,m+2})=M+W$ by part (b),   and hence
\begin{eqnarray}\label{eq:Delta-Q}
\Delta (Q^{+})=\Delta (e_m)(M+W)-(M+W)\Delta (e_m).
\end{eqnarray}

Observe that $k_m$ commutes with $E_{m-1, m+2}$, and $ k_{m-1} k_m k_{m+1}$ with
$e_m$. Thus
\begin{eqnarray}\label{eq:M-commu}
\begin{aligned}
\Delta(e_m) M - M \Delta(e_m)&= [e_m, E_{m-1, m+2}]\otimes k_{m-1} k_m^2 k_{m+1} \\
&\quad + 1\otimes [e_m, E_{m-1, m+2}]\\
&= Q^+\otimes k_{m-1} k_m^2 k_{m+1}  + 1\otimes Q^+.
\end{aligned}
\end{eqnarray}

It is straightforward to obtain
\begin{eqnarray*}
\begin{aligned}
\Delta (e_m)W
&=q^{-1}(1-q^{-2})e_m E_{m, m+2}\otimes e_{m-1} k_m^2 k_{m+1}\\
&+(1-q^{-2})E_{m, m+2}\otimes e_m e_{m-1} k_m k_{m+1}\\
&+ (q-q^{-1}) e_m e_{m+1}\otimes E_{m-1, m+1} k_m k_{m+1}\\
&+ (1-q^2)e_{m+1}\otimes e_m E_{m-1, m+1} k_{m+1}, \\
W\Delta (e_m)
&=(1-q^2) e_{m+1}e_m\otimes E_{m-1, m+1} k_m k_{m+1}\\
&+q(q^2-1) e_{m+1}\otimes E_{m-1,m+1}  e_m k_{m+1}\\
&+(q-q^{-1})E_{m, m+2}\otimes e_{m-1}e_m k_m k_{m+1}\\
&+(1-q^{-2})E_{m, m+2}e_m\otimes e_{m-1}k_m^2k_{m+1}.
\end{aligned}
\end{eqnarray*}
These results lead to
\[
\begin{aligned}
&\Delta (e_m)W - W\Delta (e_m)\\
& = (1-q^{-2})(q^{-1}e_m E_{m, m+2}-  E_{m, m+2}e_m)\otimes e_{m-1} k_m^2 k_{m+1}\\
&+ (1-q^2)e_{m+1}\otimes (e_m E_{m-1, m+1} +q  E_{m-1, m+1} e_m)k_{m+1}\\
&+(q-q^{-1})E_{m, m+2}\otimes (q^{-1}e_m e_{m-1} - e_{m-1}  e_m )k_m k_{m+1}\\
&+ (q-q^{-1}) (e_m e_{m+1}+ q e_{m+1} e_m)\otimes E_{m-1, m+1} k_m k_{m+1}\\
\end{aligned}
\]
Since
$
q^{-1}e_m E_{m, m+2}-  E_{m, m+2}e_m = 0$ and $
e_m E_{m-1, m+1} +q  E_{m-1, m+1} e_m=0,
$
the first two terms on the right side vanish independently; by the definitions of
$E_{m, m+2}$ and $E_{m-1, m+1}$,  the last two terms cancel out.  We arrive at
\begin{eqnarray}\label{eq:W-commu}
\Delta (e_m)W - W\Delta (e_m)=0.
\end{eqnarray}

Now using \eqref{eq:M-commu} and \eqref{eq:W-commu} in equation \eqref{eq:Delta-Q},  we immediately obtain
\[
\Delta(Q^+)=Q^+ \otimes  k_{m-1} k_m^2 k_{m+1} +1\otimes Q^+.
\]
This completes the proof of  equation \eqref{eq:Q-2}.

Equation  \eqref{eq:Q-3} can be proved similarly; we omit the details.
\end{proof}

\end{document}